\theoremstyle{plain}
\newtheorem{lemma}{Lemma}[section]
\newtheorem{proposition}[lemma]{Proposition}
\newtheorem{theorem}[lemma]{Theorem}
\newtheorem{corollary}[lemma]{Corollary}
\theoremstyle{definition}
\newtheorem{definition}[lemma]{Definition}
\newtheorem{example}[lemma]{Example}
\newtheorem{remark}[lemma]{Remark}
\newtheorem{question}[lemma]{Question}
\theoremstyle{remark}
\numberwithin{equation}{section}
\newcommand{\lcm}{\operatorname{lcm}}
\newcommand{\id}{\operatorname{id}}
\newcommand{\Link}{\operatorname{Link}}
\newcommand{\bm}{\operatorname{{\bf bm}}}
\newcommand{\letbe}{\mathbin{\raisebox{.3pt}{:}\!=}}
\begin{document}

\title{Bistellar Cluster Algebras and Piecewise Linear Invariants}

\author{Alastair Darby}
\address{Department of Pure Mathematics\\
Xi'an Jiaotong--Liverpool University\\
Suzhou\\
215123\\
P.\ R.\ China.}
\email{Alastair.Darby@xjtlu.edu.cn}

\author{Fang Li}
\address{Department of Mathematics\\
Zhejiang University\\
Hangzhou\\
310027\\
P.\ R.\ China.}
\email{fangli@zju.edu.cn}

\author{Zhi L\"u}
\address{School of Mathematical Sciences\\
Fudan University\\
Shanghai\\
200433\\
P.\ R.\ China.}
\email{zlu@fudan.edu.cn}

\date{\today}

\subjclass[2010]{57Q15, 13F60, 57Q25.}

\begin{abstract}
Inspired by the ideas and techniques used in the study of cluster algebras we construct a new class of algebras, called \emph{bistellar cluster algebras}, from closed oriented triangulated even-dimensional manifolds by performing middle-dimensional bistellar moves. This class of algebras exhibit the algebraic behaviour of middle-dimensional bistellar moves but do not satisfy the classical cluster algebra axiom: ``every cluster variable in every cluster is exchangeable''.    Thus the construction of bistellar cluster algebras is quite different from that of a classical cluster algebra.


 Secondly, using bistellar cluster algebras and the techniques of combinatorial topology, we construct a direct system associated with a set of PL homeomorphic  PL manifolds of dimension 2 or 4, and show that the limit of this direct system is a PL invariant.
\end{abstract}

\maketitle

\section{Introduction}

Finding invariants of manifolds is important in solving problems such as the \emph{smooth 4- dimensional Poincar\'e conjecture} (see~\cite{fgmw}) which asks if there are different smooth structures on the 4-sphere. In this paper we describe a new invariant of Piecewise Linear manifolds, which in the case of 4~dimensional manifolds (and lower), are the same as smooth manifolds. This has the potential of helping to answer the smooth Poincar\'e conjecture if different candidates for the smooth/PL structures can be shown to have distinct invariants. The invariants we have found for 2- and 4-dimensional smooth/PL manifolds are inspired by the work in the field of cluster algebras along with classical work in PL manifolds as described below.

\subsection{Background and motivation}

Cluster algebras were first introduced and studied systematically in a series of papers ~\cite{fz1,fz2, bfz, fz3}.  The original motivation and aim for cluster algebras was to provide an algebraic framework for the study of total positivity phenomena in semisimple Lie groups and of Lusztig's and Kashiwara's  canonical bases of quantum groups. In recent years, it has been shown that this kind of algebraic structure for cluster algebras can arise in various mathematical settings, such as total positivity, Lie theory, quiver representations, Teichm\"uller theory, Poisson geometry, discrete dynamical systems, tropical geometry, algebraic combinatorics, topology of surfaces and so on (see, for example,~\cite{ck1, ck2, fz4}).

Roughly speaking, a cluster algebra $\mathcal{A}$ of rank $r$ is a subalgebra of an ambient field $\mathcal{F}$ of rational functions in $r$ indeterminates, which is defined via a distinguished
family of generators. These generators are called {\em cluster variables} and are grouped into $r$-subsets called {\em clusters}. These clusters satisfy the  exchange property: for any cluster ${\bf x}$ and any element
$x \in{\bf  x}$, there is another cluster ${\bf x}'$ and $x'\in {\bf x}'$ with ${\bf x}'=({\bf x}\setminus\{x\})\cup \{x'\}$ such that  $x$ and  $x'$ are
related  by an {\em exchange relation}
\[
xx' \,=\, M^+ +M^-,
\]
where $M^+$ and $M^-$ are two monomials without common divisors in the $r-1$ variables
${\bf x}\cap {\bf x}'$, and the exponents in $M^+$ and $M^-$ are encoded in an $(r\times r)$ integral
matrix $B = (b_{ij})$ (usually skew-symmetric) called the {\em exchange
matrix}. The cluster algebra $\mathcal{A}$ can also be obtained from  a pair $({\bf x}, B)$, called a seed, by seed mutations, where ${\bf x}$ is an initial cluster and $B$ is an initial exchange matrix, so that all cluster variables can be  constructed
inductively, and each cluster variable $x$ is a rational function  of the variables in the initial cluster ${\bf x}$. With this point of view, the cluster algebra $\mathcal{A}$
becomes a subring of the field formed by all rational functions  of the variables in the initial cluster ${\bf x}$.

In their work~\cite{fst}, Fomin, Shapiro and Thurston showed that any triangulation $T$ of a connected oriented Riemann surface $S$ produces a cluster algebra $\mathcal{A}_T$, which is briefly stated  as follows: they regard  the set of all edges (or $1$-dimensional faces) of $T$  as an initial cluster ${\bf x}(T)$
and  define a skew-symmetric
matrix $B(T)$  by looking at the (signed) adjacencies between the edges of $T$, which  serves as an initial exchange matrix of the cluster algebra $\mathcal{A}_T$.
Then the required cluster algebra $\mathcal{A}_T$ is obtained from the initial seed $({\bf x}(T), B(T))$ by seed mutations. In particular, they showed that  mutations of matrices in algebra completely agree with 1-moves (or flips) in combinatorial topology. It is well-known that  any connected oriented Riemann surface  $S$ has a unique PL structure and
any two triangulations with the same number of vertices of $S$ are connected by 1-moves. Thus, for any two triangulations $T$ and $T'$ with the same number of vertices of $S$,
both $\mathcal{A}_T$ and $\mathcal{A}_{T'}$ are isomorphic. This means that this class of cluster algebras are invariant up to isomorphism under the actions of 1-moves.

The following question naturally arises:

\begin{question}\label{q1}
What kinds of  analogues of cluster algebras can be produced from high-dimensional oriented triangulated manifolds?
\end{question}

Although the 2-dimensional case provides us with intuition on the study of the higher dimensional cases, we note that the problem is more complicated than it may seem at first glance. This heavily depends upon the combinatorial topology of the various moves in the higher dimensional cases.

Actually, Question~\ref{q1} is  directly related to the problem of how to give an algebraic description of various moves (especially bistellar moves) in the higher dimensional cases.  Pachner~\cite{pa1, pa2} tells us that
two PL manifolds  are PL homeomorphic if and only if one can be transformed into the other by a finite sequence of bistellar moves. We therefore pay more attention to the case of bistellar moves in general.

\subsection{Our work and statements of main results}

Let $K$ be a connected oriented triangulated closed manifold of dimension $n\geq 2$. To extend the theory of Fomin, Shapiro and Thurston in the 2-dimensional case to the general case,
we need to consider the following questions:
\begin{enumerate}
\item What will our {\em cluster variables} will be?
\item How should we define the {\em exchange matrices}?
\item How should the {\em matrix mutations} which correspond to
the bistellar moves be defined?
\item How can we define the {\em exchange relations} among seeds?
\end{enumerate}
These questions are closely related to each other. We first investigate how the $f$-vector of $K$ changes under the action of
bistellar moves. Together with a theorem of Pachner~\cite{pa1}, we show that the $f$-vector of $K$ does not change when $n$ is even and we only perform middle-dimensional bistellar moves (see Proposition~\ref{f-vector}). Based upon this, we
shall pay more attention to the case which $n=2h$ is even, and  study the algebraic behaviour of performing middle-dimensional bistellar moves on $K$.

The definition of the exchange matrix of $K$ depends upon the choice of cluster variables. On the other hand, in order to make sure that the definition of the exchange matrix of $K$ is well-defined,
we need to fix the orientations of all $n$-simplices (or lower-dimensional simplices)  of $K$, as shown in \cite[Definition 4.1]{fst} for the 2-dimensional case.
 Indeed, we can use an orientation class of $H_n(K)$  to fix the orientations of all $n$-simplices. However, there is no canonical way to fix orientations of simplices of dimension less than $n$.  This leads us to  choose all $(n-1)$-simplices of $K$ as cluster variables. We give a generalized boundary operator for oriented simplices, and then use it to define the pair orderings among all $(n-1)$-simplices of an oriented $n$-simplex. When $n=2$, Fomin, Shapiro and Thurston use clockwise and counter-clockwise more visually to define such pair orderings among three edges of an oriented triangle, as shown in \cite[Definition 4.1]{fst}.  Furthermore, we can define the exchange matrix $B(K)$ of $K$. Our definition is a direct generalization of  \cite[Definition 4.1]{fst}. It should be pointed out that our definition can still be carried out even if $n$ is odd.

As we will see,  if $n=2h$, when we perform an $h$-dimensional bistellar move  on $K$, then we will change $\binom{h+1}{2}$ cluster
variables (that is, $(n-1)$-simplices of $K$) into $\binom{h+1}{2}$ new cluster variables. When $n=2$, $\binom{h+1}{2}=1$ and so we replace a cluster variable by a new one. This is the case of a classical cluster algebra. However, if $n>2$, we need to replace more cluster variables  when we perform an $h$-dimensional bistellar move  on $K$. This results in an essential difference from the 2-dimensional case.

To define the matrix mutation such that it agrees with the corresponding middle-dimensional bistellar move, we analyze the combinatorial structure of an $h$-dimensional bistellar move ${\bf bm}_\alpha$ on $K$. This associates to a bistellar pair $(\alpha, \beta)$, and two sub-complexes $\Lambda_\alpha$ and $\Lambda_\beta$ of $K$ and ${\bf bm}_\alpha K$ respectively (see Defintion~\ref{def-bistellar} for the meaning of ${\bf bm}_\alpha K$). When we perform an $h$-dimensional bistellar move ${\bf bm}_\alpha$ on $K$, actually the resulting manifold ${\bf bm}_\alpha K$ is obtained from $K$ by replacing $\Lambda_\alpha$ with $\Lambda_\beta$, and in particular, only $\binom{h+1}{2}$ cluster variables (that is, $(n-1)$-simplices) in $\Lambda_\alpha$ will be replaced by $\binom{h+1}{2}$ cluster variables in $\Lambda_\beta$. Note that both $\Lambda_\alpha$ and $\Lambda_\beta$ have $\binom{h+1}{1}\binom{h+1}{1}+\binom{h+1}{2}$ simplices of dimension $n-1$ (that is, cluster variables). Then we can choose a canonical combinatorial equivalence $\sigma$ between $\Lambda_\alpha$ and $\Lambda_\beta$, which can be regarded as a permutation of order 2, and
sends the $\binom{h+1}{2}$ cluster variables  in $\Lambda_\alpha$ into the $\binom{h+1}{2}$ cluster variables in $\Lambda_\beta$. Moreover,
we define the required  matrix mutation
\[
\mu_{\alpha}\colon B(K)\longrightarrow B({\bf bm}_\alpha K)
\]
along \emph{multi-directions} (see Definition~\ref{mm-def}), so that  the matrix mutation $\mu_{\alpha}$ agrees with the bistellar move ${\bf bm}_\alpha$ for exchange matrices, stated as follows:\\
\\
{\bf Proposition~\ref{mutation}.}
 $\mu_{\alpha}(B(K))=B({\bf bm}_\alpha K)$.\\

When we perform a seed mutation we must know how to define the exchange relations. The canonical combinatorial equivalence $\sigma$ helps us clearly understand the correspondence between seeds
 $\Sigma_K$ and $\Sigma_{{\bf bm}_\alpha K}$, so that with the matrix mutation together, we can define the $\binom{h+1}{2}$ exchange relations (see (\ref{e-relation})), giving further the definition of a seed mutation  $\Phi_{\alpha}\colon \Sigma_K\to \Sigma_{{\bf bm}_\alpha K}$ (see also Definition~\ref{seed mutation}). Since all middle-dimensional moves that we perform are bistellar, we call $\Phi_{\alpha}$ the {\em bistellar seed mutation}, and call those $\binom{h+1}{2}$ cluster variables in $\Lambda_\alpha$ and $\Lambda_\beta$ the {\em exchangeable bistellar cluster variables}. We further show that $\Phi_{\alpha}$
is invertible (see Proposition~\ref{b-seed-m}).

Using what we have shown above, we can define the required bistellar cluster algebra $\mathcal{A}_K$ of $K$. Briefly speaking, $\mathcal{A}_K$ is obtained from the initial seed $\Sigma_K$ via bistellar seed mutations by performing all possible
bistellar $h$-moves on $K$. 
To give the definition of $\mathcal{A}_K$  more clearly, we introduce the bistellar exchange graph determined by $K$, which gives the information on
 all possible middle-dimensional bistellar moves on $K$. Moreover, in a similar way to classical cluster algebra, we first define the bistellar cluster algebra $\mathcal{A}_{[K]}$ of $[K]$ (see Definition~\ref{alg1}), where $[K]$ is a set, each of which can be obtained from $K$ by a finite sequence of middle-dimensional bistellar moves. Then we give the definition of
 $\mathcal{A}_K$, which is the reduction of $\mathcal{A}_{[K]}$ to the field $\mathbb{F}\mathcal{X}(K)$ with $\Sigma_K$ as initial seed by using the bistellar seed mutations, where
 $\mathbb{F}\mathcal{X}(K)$ is the field of all rational functions in all cluster variables given by $K$.

 Now our first main result is stated as follows:\\
\\
{\bf Theorem~\ref{main1}.}
{\em Let $K$ and  $K'$ be two connected oriented triangulated closed manifolds of dimension $2h$. If $K'$ is obtained from $K$ by doing a finite sequence of bistellar $h$-moves, then
$\mathcal{A}_{K}\cong \mathcal{A}_{K'}$.}

\begin{remark}
Comparing with classical cluster algebras, our bistellar cluster algebras $\mathcal{A}_K$ have the following properties:
\begin{enumerate}
\item[{\rm (A)}]Each $\mathcal{A}_K$  is of finite type, that is, there are only finitely many cluster variables. In particular, all exchangeable bistellar cluster variables may not cover all the cluster variables.
\item[{\rm (B)}] Generally $\mathcal{A}_K$  is some kind of quotient algebra with an ideal determined by exchange relations. This is an essential difference from classical cluster algebras. The main reason why there is an ideal is because the moves used are bistellar and exchangeable bistellar cluster variables may produce the same exchange relations with each other. So our algebra $\mathcal{A}_K$, even for $\dim K=2$,  does not satisfy the cluster algebra axiom: {\rm every cluster variable in every cluster is exchangeable}.
    Certainly our algebra $\mathcal{A}_K$ is computable (see the example in Section~\ref{example1}).
\end{enumerate}
\end{remark}

Using the bistellar cluster algebras we have constructed, we further consider the problem of PL invariants for closed PL manifolds. Let $K$ be a closed PL manifold of dimension $n$. When $n=2, 4$, we show that a set $[K]$ of all closed PL manifolds PL homeomorphic to $K$ can form  a direct system by associating with the bistellar cluster algebras of closed PL manifolds, and  then show that the limit $\mathcal{A}_K^{PL}$ of this direct system is a PL invariant. This is the second main result of this paper, which is stated as follows:\\
  \\
{\bf Theorem~\ref{main2}.}
{\em Let $K$ and $K'$ be two closed PL manifolds of dimension $n=2,4$, such that they are PL homeomorphic. Then $\mathcal{A}_K^{PL}\cong \mathcal{A}_{K'}^{PL}$.}


\section{Bistellar Moves and Pachner's Theorem}

We begin by reviewing some standard background material regarding the combinatorial topology of triangulated and PL manifolds (see~\cite{bp1} for more information).


\subsection{Triangulated Manifolds and Bistellar Moves}

An \emph{abstract simplicial complex} $K$ on a finite set $S$ is a collection in the power set $2^S$ such that for each $\alpha\in K$, any subset (including the empty set) of $\alpha$ belongs to $K$. Each $\alpha$ in $K$ is called a \emph{simplex} (or a \emph{face}) of $K$ and has dimension $|\alpha|-1$, where
 $|\alpha|$ is the cardinality of $\alpha$. The dimension of $K$ is defined as $\max\{\dim \alpha\mid \alpha\in K\}$.

Let $K_1$ and $K_2$ be two abstract simplicial complexes on vertex sets $S_1$ and $S_2$ respectively.
A map $\phi\colon K_1\to K_2$ is said to be \emph{simplicial} if $\phi$ is an extension of a  map
$f\colon S_1\to S_2$ such that $\phi(\alpha)=f(\alpha)\in K_2$, for any $\alpha\in K_1$. The two simplicial complexes $K_1$ and $K_2$ are said to be \emph{combinatorially equivalent} if there are simplicial maps $\phi\colon  K_1\to K_2$ and $\psi\colon K_2\to K_1$ such that  $\psi\circ\phi=\id_{K_1}$ and $\phi\circ\psi=\id_{K_2}$.

It is well-known (see~\cite{bp1} for example) that there is a bijection, given by geometric realization, between abstract simplicial complexes up to combinatorial equivalence and geometric simplicial complexes up to homeomorphism. Thus, we may identify abstract simplicial complexes with geometric simplicial complexes.

The \emph{join} of two simplicial complexes $K_1$ and $K_2$ on vertex sets $S_1$ and $S_2$ is given by
\[
K_1\ast K_2\,\letbe\, \{ \alpha\in S_1\sqcup S_2\mid \alpha=\alpha_1\cup \alpha_2,\ \alpha_1\in K_1,\ \alpha_2\in K_2\}.
\]
Given a face $\alpha$ of a simplicial complex $K$, the \emph{link} of $\alpha$ in $K$ is the subcomplex given by
\[
\Link_K\alpha\,\letbe\, \{ \alpha'\in K\mid \alpha\cup\alpha'\in K,\ \alpha\cap\alpha'=\emptyset\}.
\]
 A \emph{triangulated manifold} is a simplicial complex $K$ whose geometric realization $|K|$ is a topological manifold.

\begin{definition}[\cite{pa1, bp1}]\label{def-bistellar}
Let $K$ be a triangulated manifold  of dimension $n$. Suppose that $\alpha\in K$ is an $(n-h)$-simplex such that $\Link_K \alpha=\partial \beta$, where $\beta\notin K$. Then the operation
\[
{\bm}_\alpha K\,\letbe\, (K\setminus (\alpha\ast\partial \beta))\cup (\partial \alpha\ast \beta)
\]
is called the {\em bistellar $h$-move at $\alpha$}. Note that $\dim\beta=h$.
\end{definition}

The pair $(\alpha, \beta)$ plays an important role in performing the operation ${\bm}_\alpha K$ and we call the ordered pair $(\alpha, \beta)$ a \emph{bistellar pair} of type $h$ in $K$. Clearly, if $(\alpha, \beta)$ is a bistellar pair of type $h$ in $K$, then  $(\beta, \alpha)$ is a bistellar pair of type $n-h$ in  ${\bm}_\alpha K$ and we can perform a bistellar $(n-h)$-move at $\beta$ on ${\bm}_\alpha K$; in particular,
\[
{\bm}_\beta {\bm}_\alpha K\,=\,K.
\]
Thus, ${\bm}_\beta$ is the inverse operation of ${\bm}_\alpha$. It should be pointed out that there may not be a bistellar pair of a certain type in a triangulated manifold, for example, the boundary complex of an $(n+1)$-simplex has no bistellar pairs of type $h$, for all $h>0$. However, we can always perform a bistellar $0$-move on any triangulated manifold.

It is easy to see that a bistellar $0$-move adds a new vertex, a bistellar $n$-move deletes a vertex, and all other bistellar moves do not change the number of vertices of $K$.

We now study the relation between the numbers of faces of $K$ and ${\bm}_\alpha K$. By $\mathcal{F}_i^K$ we denote the set of all $i$-dimensional simplices of $K$ and also set $\mathcal{F}(K)\letbe \mathcal{F}_{\dim K-1}^K$ for the codimension~1 faces. Each $n$-dimensional simplicial complex $K$ determines an integral vector $f(K)=(f_0, f_1, ..., f_n)$, called the {\em $f$-vector} of $K$, where  $f_i=|\mathcal{F}_i^K|$ is the number of $i$-dimensional simplices in $K$. The $f$-vector of $K$ defines the {\em $h$-vector $h(K)=(h_0, h_1, ..., h_{n+1})$} by
\[
h_0t^{n+1}+h_1t^n+\cdots+ h_{n+1}\,=\,(t-1)^{n+1}+f_0(t-1)^n+\cdots+f_n.
\]
In addition, the $h$-vector of $K$ also defines the \emph{$g$-vector} $g(K)=(g_0,g_1,\dots, g_{\left\lfloor {\frac{n+1}{2}}\right\rfloor })$ by
\[
g_0\,=\,1 \text{ and } g_i\,=\,h_i-h_{i-1},\quad \text{for $i=1,\dots,\left\lfloor {\frac{n+1}{2}}\right\rfloor$.}
\]

\begin{theorem}[Pachner~\cite{pa1}]\label{pachner1}
Let $K$ be a triangulated manifold of dimension $n$. Assume that $(\alpha, \beta)$ is a bistellar pair of type $h$ with $0\leq h\leq \left\lfloor {\frac{n}{2}}\right\rfloor $ in $K$. If $h\leq \left\lfloor {\frac{n-1}{2}}\right\rfloor $, then
\[
g_i({\bm}_\alpha K)\,=\,
\begin{cases}
g_i(K)+1, & \text{if $i=h+1$};\\
g_i(K), & \text{if $i\not= h+1$}.
\end{cases}
\]
If $n=2h$, then $g_i({\bm}_\alpha K)=g_i(K)$ for all $i$.
\end{theorem}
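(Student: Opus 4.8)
The plan is to reduce the entire statement to a single generating-function computation, using that the $g$-vector is, by its very definition, a fixed linear function of the $f$-vector. The first and only genuinely geometric step is therefore to record how the $f$-vector changes under $\bm_\alpha$. Here $\alpha$ is an $(n-h)$-simplex on $n-h+1$ vertices and $\beta$ is the $h$-simplex on $h+1$ vertices with $\Link_K\alpha=\partial\beta$. Since $\bm_\alpha$ retriangulates the ball $\alpha\ast\partial\beta$ as $\partial\alpha\ast\beta$, both carrying the same boundary sphere $\partial\alpha\ast\partial\beta$, the faces actually deleted are precisely those of the form $\alpha\cup\gamma$ with $\gamma\subsetneq\beta$ (the faces of $\alpha\ast\partial\beta$ meeting the interior), and the faces actually created are precisely those of the form $\delta\cup\beta$ with $\delta\subsetneq\alpha$. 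Counting these by cardinality, the number of deleted faces of dimension $d$ is $\binom{h+1}{d-(n-h)}$ (for $n-h\le d\le n$) and the number of created faces of dimension $d$ is $\binom{n-h+1}{d-h}$ (for $h\le d\le n$), so
\[
\Delta f_d\,=\,\binom{n-h+1}{d-h}-\binom{h+1}{d-(n-h)}.
\]

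Next I would feed this into the defining relation, which with the convention $f_{-1}=1$ reads $H(t)\letbe\sum_{i=0}^{n+1}h_i t^{n+1-i}=\sum_{j=-1}^{n}f_j(t-1)^{n-j}$. As $\Delta f_{-1}=0$, the induced change in the $h$-polynomial is $\Delta H(t)=\sum_{d}\Delta f_d\,(t-1)^{n-d}$. Reindexing the two contributions by $j=d-h$ and $k=d-(n-h)$ and using the elementary identity $\sum_{j=0}^{m-1}\binom{m}{j}(t-1)^{m-1-j}=\frac{t^m-1}{t-1}$ gives
\[
\sum_{j=0}^{n-h}\binom{n-h+1}{j}(t-1)^{n-h-j}=\frac{t^{n-h+1}-1}{t-1},\qquad
\sum_{k=0}^{h}\binom{h+1}{k}(t-1)^{h-k}=\frac{t^{h+1}-1}{t-1}.
\]
Subtracting, the constant terms cancel and
\[
\Delta H(t)\,=\,\frac{t^{\,n-h+1}-t^{\,h+1}}{t-1}.
\]

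The two assertions now fall out. If $n=2h$ the numerator vanishes identically, so $\Delta H\equiv 0$: every $h_i$, and hence every $g_i$, is unchanged. If instead $h\le\lfloor\frac{n-1}{2}\rfloor$, then $n>2h$ and $\Delta H(t)=t^{h+1}(1+t+\cdots+t^{n-2h-1})=t^{h+1}+\cdots+t^{n-h}$. Reading off the coefficient of $t^{n+1-i}$ gives $\Delta h_i=1$ for $h+1\le i\le n-h$ and $\Delta h_i=0$ otherwise, whence the telescoping $g_i=h_i-h_{i-1}$ yields $\Delta g_i=1$ at $i=h+1$, $\Delta g_i=-1$ at $i=n-h+1$, and $\Delta g_i=0$ elsewhere.

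The one point demanding care — and the only place the hypothesis $h\le\lfloor\frac{n-1}{2}\rfloor$ enters — is that $g_i$ is defined only in the range $0\le i\le\lfloor\frac{n+1}{2}\rfloor$. Under this hypothesis one checks $h+1\le\lfloor\frac{n+1}{2}\rfloor$, so the jump of $+1$ at $i=h+1$ is recorded, while $n-h+1>\lfloor\frac{n+1}{2}\rfloor$, so the compensating drop of $-1$ at $i=n-h+1$ lies outside the admissible range and never appears. This gives $g_{h+1}(\bm_\alpha K)=g_{h+1}(K)+1$ and $g_i(\bm_\alpha K)=g_i(K)$ for every other admissible $i$, as claimed. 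I expect the main obstacle to be entirely bookkeeping: pinning down the deleted and created faces with the correct strict containments $\gamma\subsetneq\beta$ and $\delta\subsetneq\alpha$, and then carefully identifying which index of the symmetric change in $\Delta h$ is truncated away upon passing to the $g$-vector.
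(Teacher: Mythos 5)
The paper does not prove this statement at all: it is quoted as Theorem~\ref{pachner1} with an attribution to Pachner, so there is no in-paper argument to compare yours against. Your self-contained derivation is correct. The face count is right: the net effect of $\bm_\alpha$ is to delete exactly the faces $\alpha\cup\gamma$ with $\gamma\subsetneq\beta$ and to create exactly the faces $\delta\cup\beta$ with $\delta\subsetneq\alpha$ (the one point you gloss over --- that no deleted face survives by coincidentally lying elsewhere in $K$ and no created face is already present --- follows from $\Link_K\alpha=\partial\beta$, which forces every face of $K$ containing $\alpha$ to lie in $\alpha\ast\partial\beta$, and from $\beta\notin K$). The generating-function computation then gives $\Delta H(t)=\bigl(t^{\,n-h+1}-t^{\,h+1}\bigr)/(t-1)$ exactly as you state, the $n=2h$ case is immediate, and in the case $h\le\lfloor\frac{n-1}{2}\rfloor$ your bookkeeping of where the $+1$ and $-1$ land in $\Delta g$ is correct: $h+1\le\lfloor\frac{n+1}{2}\rfloor$ while $n-h+1>\lfloor\frac{n+1}{2}\rfloor$, so only the increment at $i=h+1$ is visible in the truncated $g$-vector. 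This is a clean and complete proof of the cited result; it also recovers, as a byproduct, the face-count bookkeeping that the paper redoes by hand in the proof of Proposition~\ref{f-vector}.
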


For our purpose we need to further analyze when  $f({\bf bm}_\alpha K)=f(K)$.

\begin{proposition}\label{f-vector}
Let $K$ be a triangulated manifold of dimension $n$. Assume that $(\alpha, \beta)$ is a bistellar pair of type $h$ in $K$. Then $f({\bm}_\alpha K)=f(K)$ if and only if $n=2h$.
\end{proposition}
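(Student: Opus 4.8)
The plan is to track precisely which simplices are destroyed and which are created by $\bm_\alpha$, and then to count them by dimension. Since $\Link_K\alpha=\partial\beta$, the definition of the link gives $\alpha\ast\partial\beta\subseteq K$ (it is the closed star of $\alpha$). Because the vertex sets of $\alpha$ and $\beta$ are disjoint, every face of the full join $\alpha\ast\beta$ is written uniquely as $\sigma\cup\tau$ with $\sigma\subseteq\alpha$ and $\tau\subseteq\beta$; under this bookkeeping $\alpha\ast\partial\beta$ consists of the faces with $\tau\subsetneq\beta$, and $\partial\alpha\ast\beta$ of the faces with $\sigma\subsetneq\alpha$. Hence the faces in $\alpha\ast\partial\beta$ but not in $\partial\alpha\ast\beta$ are exactly those with full $\alpha$-part, namely $\{\alpha\cup\tau : \tau\subsetneq\beta\}$, and the faces in $\partial\alpha\ast\beta$ but not in $\alpha\ast\partial\beta$ are exactly those with full $\beta$-part, namely $\{\sigma\cup\beta : \sigma\subsetneq\alpha\}$. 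The move removes the first family and inserts the second; every remaining face of the two joins is common to both pieces and survives.

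First I would check that these two families genuinely account for the net change, i.e. that they do not interfere. Each inserted face contains $\beta$ as a subface, so it cannot already lie in $K$ (which does not contain $\beta$); each removed face has full $\alpha$-part, so it is not among the inserted faces. Therefore $f_i(\bm_\alpha K)-f_i(K)=A_i-R_i$, where $A_i$ and $R_i$ count the $i$-dimensional members of the inserted and removed families. As $\beta$ is an $h$-simplex and $\alpha$ an $(n-h)$-simplex, a direct count of faces gives
\[
R_i=\binom{h+1}{\,i-n+h\,},\qquad A_i=\binom{n-h+1}{\,i-h\,},
\]
with binomial coefficients having a negative lower entry read as $0$ (so the formulas are exactly the face counts in the relevant range $0\le i\le n$).

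For the ``if'' direction, put $n=2h$: then $n-h=h$ and $i-n+h=i-h$, so $A_i=\binom{h+1}{i-h}=R_i$ for every $i$, whence $f(\bm_\alpha K)=f(K)$. For the ``only if'' direction it suffices to look at the top dimension $i=n$, where $A_n=\binom{n-h+1}{n-h}=n-h+1$ counts the facets of $\alpha$ and $R_n=\binom{h+1}{h}=h+1$ counts the facets of $\beta$, so
\[
f_n(\bm_\alpha K)-f_n(K)=(n-h+1)-(h+1)=n-2h.
\]
If $n\neq 2h$ this is nonzero, so the $f$-vectors differ; the contrapositive completes the equivalence.

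The only delicate point is the set-theoretic bookkeeping in the first step---correctly isolating the removed and inserted faces inside the two joins and verifying that they do not overlap---after which the statement reduces to the one-line evaluation $n-2h$ at the top dimension together with the index symmetry $A_i=R_i$ at $n=2h$. (Alternatively, the implication $n\neq 2h\Rightarrow f(\bm_\alpha K)\neq f(K)$ can be extracted from Pachner's Theorem~\ref{pachner1}: for $2h<n$ the $g$-vector, and therefore the $f$-vector, already changes, and the case $2h>n$ follows by applying this to the inverse move $\bm_\beta$ on $\bm_\alpha K$, which has type $n-h$ with $2(n-h)<n$; the equality at the middle value $n=2h$ still requires the direct count above.)
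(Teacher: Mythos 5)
Your proof is correct, and it takes a genuinely different (and more self-contained) route for half of the statement. The paper proves the ``only if'' direction exactly as you do in your top-dimensional count: it lists the $h+1$ facets $F_i$ of $\alpha\ast\partial\beta$ and the $n-h+1$ facets $H_j$ of $\partial\alpha\ast\beta$, observes that the complements agree, and reads off $h+1=n-h+1$. For the ``if'' direction, however, the paper does not count faces at all: it invokes Pachner's Theorem~\ref{pachner1} ($g_i(\bm_\alpha K)=g_i(K)$ for all $i$ when $n=2h$) and then asserts that an unchanged $g$-vector forces an unchanged $f$-vector --- a step which silently uses $h_0=1$ together with the Dehn--Sommerville relations to recover the full $h$-vector, and hence $f$, from its first half. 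Your argument replaces this with the explicit bookkeeping that the move deletes exactly the faces $\{\alpha\cup\tau:\tau\subsetneq\beta\}$ and inserts exactly $\{\sigma\cup\beta:\sigma\subsetneq\alpha\}$, yielding the closed formula $f_i(\bm_\alpha K)-f_i(K)=\binom{n-h+1}{i-h}-\binom{h+1}{i-n+h}$; the symmetry of the two binomials at $n=2h$ then gives the ``if'' direction with no appeal to $g$-vectors. What your approach buys is a stronger, purely elementary statement (the exact change in every $f_i$, not just the equivalence) and independence from Theorem~\ref{pachner1}; what the paper's approach buys is brevity in the ``if'' direction at the cost of leaning on quoted machinery. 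Your set-theoretic verification that the deleted and inserted families are disjoint and exhaust the symmetric difference (inserted faces contain $\beta\notin K$; every face of $K$ containing $\alpha$ lies in the closed star $\alpha\ast\partial\beta$) is exactly the delicate point, and you have handled it correctly.
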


\begin{proof}
First it is easy to see that if $g_i({\bm}_\alpha K)=g_i(K)$ for all $i$, then $f({\bm}_\alpha K)=f(K)$.
Thus, if $n=2h$, by Theorem~\ref{pachner1} we find that $f({\bm}_\alpha K)=f(K)$.

Assume that $f({\bm}_\alpha K)=f(K)$. By the definition of bistellar moves, the bistellar $h$-move at $\alpha$ on $K$  only changes the local structure of $K$. Actually, since $\Link_K \alpha=\partial \beta$, we have that $\alpha\cap \beta=\emptyset$.  We write $\alpha=(v_0,\dots,v_{n-h})$ and $\beta=(v_{n-h+1},\dots,v_{n+1})$. Then there are the following $h+1$ simplices of dimension $n$ in $K$
\[
F_i\,\letbe\, \alpha\cup\beta\setminus\{v_{n+1-i}\},\quad i=0,\dots, h,
\]
where $\alpha=\bigcap_{i=0}^{h}F_i$, and there are the following $n-h+1$ simplices of dimension $n$ in ${\bm}_\alpha K$
\[
H_j\,\letbe\,\alpha\cup\beta\setminus\{v_j\},\quad j=0,\dots,n-h,
\]
where $\beta=\bigcap_{j=0}^{n-h}H_j$. Since ${\bm}_\alpha K=(K\setminus (\alpha\ast\partial \beta))\cup (\partial \alpha\ast \beta)$, we see that
\[
\mathcal{F}_n^K\setminus\{F_0,\dots,F_{h}\}\,=\,\mathcal{F}_n^{{\bm}_\alpha K}\setminus
\{H_0,\dots,H_{n-h}\}.
\]
Moreover, it follows from $f_n({\bm}_\alpha K)=f_n(K)$ that $h+1=n-h+1,$ so $n=2h$.
\end{proof}

We also see from the proof of Proposition~\ref{f-vector} that $f_n({\bm}_\alpha K)=f_n(K)$ if and only if $n=2h$. Using a similar argument to the proof of Proposition~\ref{f-vector}, we also find that $f_{n-1}({\bm}_\alpha K)=f_{n-1}(K)$ if and only if $n=2h$. Thus we have:

\begin{corollary}\label{f-vector1}
Let $K$ be a triangulated manifold of dimension $n$ and assume that $(\alpha, \beta)$ is a bistellar pair of type $h$ in $K$. Then the following statements are equivalent:
\begin{enumerate}
\item $f({\bm}_\alpha K)=f(K)$;
\item $f_n({\bm}_\alpha K)=f_n(K)$;
\item $f_{n-1}({\bm}_\alpha K)=f_{n-1}(K)$;
\item $n=2h$.
\end{enumerate}
\end{corollary}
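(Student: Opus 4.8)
The plan is to treat statement (4), $n=2h$, as a hub and show that each of (1), (2), (3) is equivalent to it; transitivity of equivalence then closes the whole cycle. The implication (4)$\Rightarrow$(1) is exactly Proposition~\ref{f-vector}, and the implications (1)$\Rightarrow$(2) and (1)$\Rightarrow$(3) are immediate, since a single coordinate of the $f$-vector cannot change if the entire vector is fixed. So the genuine work reduces to proving (2)$\Rightarrow$(4) and (3)$\Rightarrow$(4), after which the chains (4)$\Rightarrow$(1)$\Rightarrow$(2)$\Rightarrow$(4) and (4)$\Rightarrow$(1)$\Rightarrow$(3)$\Rightarrow$(4) give that all four statements are mutually equivalent.

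For (2)$\Rightarrow$(4) I would simply harvest the bookkeeping already set up in the proof of Proposition~\ref{f-vector}. There the top-dimensional simplices destroyed by $\bm_\alpha$ are precisely $F_0,\dots,F_h$ (the $h+1$ faces $\alpha\cup\beta\setminus\{v_{n+1-i}\}$), those created are precisely $H_0,\dots,H_{n-h}$ (the $n-h+1$ faces $\alpha\cup\beta\setminus\{v_j\}$), and every other $n$-simplex is untouched. Hence $f_n(\bm_\alpha K)=f_n(K)-(h+1)+(n-h+1)$, so $f_n(\bm_\alpha K)=f_n(K)$ forces $h+1=n-h+1$, i.e. $n=2h$.

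The new content is (3)$\Rightarrow$(4), the codimension-one analogue of that count. First I would record that $\bm_\alpha$ deletes exactly the faces of $\alpha\ast\partial\beta$ containing all of $\alpha$ and inserts exactly the faces of $\partial\alpha\ast\beta$ containing all of $\beta$, while the common subcomplex $\partial\alpha\ast\partial\beta$ is removed and then restored, contributing nothing net. Writing $|\alpha|=n-h+1$, $|\beta|=h+1$ and using $\alpha\cap\beta=\emptyset$, a net-deleted $(n-1)$-simplex has the form $\alpha\cup\gamma$ with $\gamma\subsetneq\beta$ and $|\gamma|=h-1$, giving $\binom{h+1}{h-1}=\binom{h+1}{2}$ of them; a net-created $(n-1)$-simplex has the form $\delta\cup\beta$ with $\delta\subsetneq\alpha$ and $|\delta|=n-h-1$, giving $\binom{n-h+1}{n-h-1}=\binom{n-h+1}{2}$ of them. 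Thus $f_{n-1}(\bm_\alpha K)=f_{n-1}(K)-\binom{h+1}{2}+\binom{n-h+1}{2}$, so $f_{n-1}(\bm_\alpha K)=f_{n-1}(K)$ is equivalent to $\binom{h+1}{2}=\binom{n-h+1}{2}$.

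The final step is to deduce $n=2h$ from $\binom{h+1}{2}=\binom{n-h+1}{2}$: the map $m\mapsto\binom{m}{2}$ is strictly increasing on the positive integers, and since $0\leq h\leq n$ both $h+1$ and $n-h+1$ are at least $1$, so the equality forces $h+1=n-h+1$, i.e. $n=2h$ (the only numerical coincidence, $\binom{0}{2}=\binom{1}{2}$, lies outside our range). I expect this last step to require care rather than real difficulty; the main obstacle is the purely combinatorial one in the previous paragraph, namely correctly identifying which $(n-1)$-faces are created and destroyed and confirming that the overlap $\partial\alpha\ast\partial\beta$ cancels, so that the two binomial counts are exhaustive and disjoint from the unchanged faces. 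With (2)$\Leftrightarrow$(4) and (3)$\Leftrightarrow$(4) established, together with (1)$\Leftrightarrow$(4) from Proposition~\ref{f-vector}, all four statements are equivalent.
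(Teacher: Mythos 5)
Your proposal is correct and follows essentially the same route as the paper: the paper obtains (1)$\Leftrightarrow$(4) from Proposition~\ref{f-vector}, reads (2)$\Leftrightarrow$(4) off the count of $h+1$ destroyed versus $n-h+1$ created top-dimensional simplices in that proof, and dismisses (3)$\Leftrightarrow$(4) with the phrase ``a similar argument.'' Your explicit codimension-one count --- $\binom{h+1}{2}$ faces of the form $\alpha\cup\gamma$ deleted versus $\binom{n-h+1}{2}$ faces of the form $\delta\cup\beta$ created, with the overlap $\partial\alpha\ast\partial\beta$ cancelling --- is exactly the detail the paper omits, and it is correct.
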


In addition, we also  see from the proof of Proposition~\ref{f-vector} that the bistellar $h$-move ${\bm}_\alpha$ on $K$ only changes the subcomplex $\Lambda_\alpha\letbe\alpha\ast \partial\beta$ of $K$,
 determined by the $h+1$ simplices of dimension $n$ in ${\bf F}_\alpha\letbe\{F_0,\dots,F_{h}\}$, which is exactly transformed into the subcomplex ${\bm}_\alpha \Lambda_\alpha\,=\,\Lambda_\beta\,\letbe\, \partial\alpha\ast\beta$ of ${\bm}_\alpha K$, determined by the $n-h+1$ simplices of dimension $n$ in ${\bf H}_\beta\letbe\{H_0,\dots, H_{n-h}\}$.

\begin{corollary}\label{com-equ}
The subcomplexes $\Lambda_\alpha$ and $\Lambda_\beta$ are combinatorially equivalent if and only if $n=2h$.
\end{corollary}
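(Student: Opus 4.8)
The plan is to identify the two subcomplexes $\Lambda_\alpha = \alpha \ast \partial\beta$ and $\Lambda_\beta = \partial\alpha \ast \beta$ as abstract simplicial complexes and to determine exactly when they are combinatorially equivalent. Writing $\alpha = (v_0,\dots,v_{n-h})$ and $\beta = (v_{n-h+1},\dots,v_{n+1})$ as in the proof of Proposition~\ref{f-vector}, note that $\alpha$ is a full simplex on $n-h+1$ vertices and $\partial\beta$ is the boundary of an $h$-simplex, so $\Lambda_\alpha = \alpha \ast \partial\beta$ is the join of a simplex $\Delta^{n-h}$ with the boundary sphere $\partial\Delta^{h} \cong S^{h-1}$. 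Symmetrically, $\Lambda_\beta = \partial\alpha \ast \beta$ is the join of $\partial\Delta^{n-h} \cong S^{n-h-1}$ with a full simplex $\Delta^{h}$.

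First I would record the forward direction, which is essentially free: if $n = 2h$ then $n - h = h$, so $\Lambda_\alpha = \Delta^{h} \ast \partial\Delta^{h}$ and $\Lambda_\beta = \partial\Delta^{h} \ast \Delta^{h}$ are literally the same join (up to swapping the two factors), hence combinatorially equivalent. The canonical equivalence is the simplicial map induced by the vertex bijection exchanging the two blocks of $h+1$ vertices; this is exactly the order-2 permutation $\sigma$ alluded to in the introduction. Since the join operation is symmetric up to combinatorial equivalence, $\Lambda_\alpha \cong \Lambda_\beta$ follows immediately.

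For the converse I would use a numerical invariant to obstruct equivalence when $n \neq 2h$. The cleanest route is to compare $f$-vectors: combinatorially equivalent complexes have identical $f$-vectors. The complex $\Delta^{p} \ast S$ has the same homotopy type and the same number of faces in each dimension determined by the join formula $f_k(K_1 \ast K_2) = \sum_{i+j = k+1} f_{i-1}(K_1) f_{j-1}(K_2)$ applied to $K_1 = \Delta^{n-h}$, $K_2 = \partial\Delta^{h}$ for $\Lambda_\alpha$, and to the swapped factors for $\Lambda_\beta$. Since $\Delta^{n-h}$ is a full simplex and $\partial\Delta^{h}$ is a sphere, the two $f$-vectors agree for all $k$ precisely when the pair of factors $(\Delta^{n-h}, \partial\Delta^{h})$ matches $(\partial\Delta^{n-h}, \Delta^{h})$ dimensionwise, i.e.\ when $n-h = h$. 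Alternatively, and more simply, one can invoke the already-established Corollary~\ref{f-vector1}: the number of top-dimensional (dimension-$(n-1)$) faces of $\Lambda_\alpha$ is $\binom{n-h+1}{1}\binom{h+1}{h} = (n-h+1)(h+1)$ in one bookkeeping and of $\Lambda_\beta$ is $(h+1)(n-h+1)$ — these coincide, so I must instead count faces of a dimension where the two complexes genuinely differ, namely compare the vertex-degree distribution or the number of maximal faces ($h+1$ versus $n-h+1$ facets of dimension $n-1$), which forces $h+1 = n-h+1$.

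The main obstacle is getting the converse obstruction to bite at the right dimension: the naive top-face count can accidentally be symmetric in $h \leftrightarrow n-h$, so I expect to have to choose the discriminating invariant carefully — the number of facets (maximal simplices), which is $h+1$ for $\Lambda_\alpha$ but $n-h+1$ for $\Lambda_\beta$, is the cleanest choice, since a combinatorial equivalence must preserve the number of maximal faces and this immediately yields $n = 2h$. Once that invariant is fixed, both directions are short, and the argument dovetails with the facet counts already extracted in the proof of Proposition~\ref{f-vector}.
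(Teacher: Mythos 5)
Your proposal is correct and matches the paper's (implicit) argument: the corollary is stated without a written proof, but the justification the authors intend is exactly the facet count $h+1$ versus $n-h+1$ extracted in the proof of Proposition~\ref{f-vector}, combined with the symmetry of the join when $n-h=h$. One small aside: your parenthetical claim that the $(n-1)$-face counts of $\Lambda_\alpha$ and $\Lambda_\beta$ coincide is actually false in general --- they are $(n-h+1)(h+1)+\binom{h+1}{2}$ and $(n-h+1)(h+1)+\binom{n-h+1}{2}$ respectively, which is consistent with Corollary~\ref{f-vector1}(3) --- but this does not affect your final argument, since you correctly discard that route in favour of counting maximal faces.
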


\begin{remark}\label{loc-equ}
If $n=2h$, since $\dim \alpha=\dim\beta=h$, an easy observation shows that for each $0\leq i<h$,
$\mathcal{F}_i^{\Lambda_\alpha}=\mathcal{F}_i^{\Lambda_\beta}$, and  for each $h\leq i\leq n$,
$\mathcal{F}_i^{\Lambda_\alpha}\not=\mathcal{F}_i^{\Lambda_\beta}$ but that
$|\mathcal{F}_i^{\Lambda_\alpha}|=|\mathcal{F}_i^{\Lambda_\beta}|$.
\end{remark}


\subsection{PL Manifolds and Pachner's Theorem}

Two triangulated manifolds $K_1$ and $K_2$ are said to be \emph{PL homeomorphic} if there exist  subdivisions $K'_1$ and $K'_2$ of $K_1$ and $K_2$ such that $K'_1$ and $K'_2$ are  combinatorially equivalent.

 A \emph{PL manifold} of dimension $n$ is a special triangulated manifold  $K$ of dimension $n$ such that $\Link_K \alpha$ is a \emph{PL sphere} of dimension $n-|\alpha|$ for every nonempty simplex $\alpha\in K$, where a PL sphere is a triangulated sphere which is PL homeomorphic to the boundary of a simplex.

The following theorem, due to Pachner, gives a description of PL homeomorphisms in terms of bistellar moves.

\begin{theorem}[Pachner~\cite{pa1, pa2}]\label{home}
Two PL manifolds $K_1, K_2$ are PL homeomorphic if and only if $K_1$ can be transformed into $K_2$ by a finite sequence of bistellar moves.
\end{theorem}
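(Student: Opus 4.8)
The plan is to prove the two implications separately; the ``only if'' direction is elementary, while the ``if'' direction absorbs essentially all of the difficulty.

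For the implication that bistellar equivalence implies PL homeomorphism, I would argue one move at a time. Given a bistellar $h$-move at $\alpha$, with $\Link_K\alpha=\partial\beta$ and $\beta\notin K$, observe that $\alpha\ast\beta$ is an $(n+1)$-simplex whose boundary decomposes as
\[
\partial(\alpha\ast\beta)\,=\,(\partial\alpha\ast\beta)\cup(\alpha\ast\partial\beta),
\]
the two pieces being glued along $\partial\alpha\ast\partial\beta$. Since $\dim\alpha=n-h$ and $\dim\beta=h$, each piece is a PL $n$-ball and the two balls share the common boundary $(n-1)$-sphere $\partial\alpha\ast\partial\beta$. Hence there is a PL homeomorphism between them fixing this common boundary, and extending it by the identity on $|K\setminus(\alpha\ast\partial\beta)|$ yields a PL homeomorphism $|K|\to|{\bm}_\alpha K|$. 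As PL homeomorphism is an equivalence relation, composing the homeomorphisms obtained from the moves in a finite sequence gives the claim.

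The converse is the substantive direction. First I would reduce to subdivisions: by the very definition of PL homeomorphic, $K_1$ and $K_2$ admit combinatorially equivalent subdivisions, and combinatorially equivalent complexes are connected by the empty sequence of moves; thus it suffices to show that any triangulated manifold is bistellar equivalent to each of its subdivisions. To control arbitrary subdivisions I would pass through \emph{stellar theory}: by the Alexander--Newman theorem, two PL homeomorphic complexes are related by a finite sequence of stellar subdivisions (starring a face) together with their inverses. The problem is thereby reduced to realising a single stellar subdivision by bistellar moves.

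The key lemma, and the main obstacle, is precisely this claim that starring a simplex $\sigma$ can be effected by a finite sequence of bistellar moves. I would prove it by a double induction on the ambient dimension $n$ and on $\dim\sigma$, exploiting the join decomposition $\sigma\ast\Link_K\sigma$ to localise the starring to the closed star of $\sigma$ and to reduce the manifold case to the model case in which $\sigma$ is a face of a single simplex. In the model case the starring is exhibited explicitly as a composite of bistellar moves supported on the simplices incident to $\sigma$, with the induction hypothesis handling the link. The delicate part is the bookkeeping: one must track exactly which faces are created and destroyed at each step and verify that every intermediate complex is a genuine triangulated manifold, so that each bistellar move in the sequence is legitimate in the sense of Definition~\ref{def-bistellar}. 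This combinatorial verification is the core of Pachner's original analysis. Once the lemma is available, it combines with the Alexander--Newman reduction to show that PL homeomorphic manifolds are bistellar equivalent, completing the proof.
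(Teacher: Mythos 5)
The paper offers no proof of this statement: it is quoted as a theorem of Pachner, with the citations to \cite{pa1, pa2} standing in for the argument. So there is no in-paper proof to compare your attempt against; it has to be judged on its own terms.

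Your easy direction is correct and essentially complete: the decomposition $\partial(\alpha\ast\beta)=(\alpha\ast\partial\beta)\cup(\partial\alpha\ast\beta)$ along the common boundary sphere $\partial\alpha\ast\partial\beta$ is exactly the right observation, and extending the resulting ball-to-ball homeomorphism by the identity does give a PL homeomorphism $|K|\to|{\bm}_\alpha K|$ supported on the closed star of $\alpha$. The converse, however, is a roadmap rather than a proof. The reduction via Alexander--Newman to stellar equivalence is the standard first step of the Lickorish-style argument (note that Pachner's own proof in the cited papers proceeds instead through elementary shellings of manifolds with boundary, a genuinely different mechanism), but the entire content of the theorem is concentrated in the lemma you defer: that starring a simplex of a closed triangulated manifold can be realized by a finite sequence of bistellar moves in which every intermediate complex is again a triangulated manifold and every move satisfies the missing-face condition $\beta\notin K$ of Definition~\ref{def-bistellar}. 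You name the intended double induction on $n$ and $\dim\sigma$ via the join $\sigma\ast\Link_K\sigma$, but you do not carry out the model case, and you explicitly leave unverified the point on which everything turns, namely that each intermediate complex actually admits the next move. Since that verification \emph{is} the theorem, the attempt as written has a genuine gap: it correctly locates the difficulty but does not resolve it.
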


It is well-known that in dimensions $\leq 3$, the categories of PL, topological and smooth manifolds are the same. However, the situation in dimensions $>3$ is quite different. A Theorem of Whitney tells us that a smooth manifold of any dimension always admits a PL structure. In dimension 4, there exist topological manifolds that  admit no PL structure, and even worse, admit no triangulation. However,  the categories of PL and smooth manifolds in dimension 4 agree. Namely, there is exactly one smooth structure on every PL 4-manifold. This means that the classification of smooth 4-manifolds is equivalent to the classification of PL 4-manifolds. But so far, the classification problem is still quite difficult and remains wide open.



\section{Exchange Matrices}

Let $\mathbb{TM}_n$ denote the set of all closed oriented triangulated manifolds of dimension $n>1$. For each closed oriented triangulated manifold $K$, since $K$ is compact, it has finitely many vertices as a simplicial complex. Without the loss of generality, we assume that $K$ is a simplicial complex on vertex set $[m]=\{1,\dots, m\}$, so $K$ is a subcomplex of $2^{[m]}$, where $2^{[m]}$ is  the $(m-1)$-dimensional simplicial complex whose geometric realization is a ball of dimension $m-1$.


\subsection{Pair Ordering}

Using the ordering $1<\dots< m$ on the vertices,  we may impose the  lexographical ordering $<$ on $2^{[m]}$. For example, when $m=3$, we have that
\[
\emptyset\,<\,\{1\}\,<\,\{2\}\,<\,\{3\}\,<\,\{1,2\}\,<\,\{1,3\}\,<\,\{2,3\}\,<\,\{1,2,3\}.
\]
By convention we will always assume a natural  lexographical  ordering $<$ on all faces of $2^{[m]}$.
 For each nonempty subset $\alpha$ in $[m]$, clearly $2^{\alpha}$ admits a  lexographical ordering $<$ which agrees with that on $2^{[m]}$.

For each simplex $\alpha$ with vertices $\{\alpha_0, \alpha_1,\dots, \alpha_n\}\subseteq [m]$, by $(\alpha_0,\alpha_1,\dots, \alpha_n)$ we denote  the simplex $\alpha$ with its orientation determined by the order of its vertices. By $-(\alpha_0,\alpha_1,\dots, \alpha_n)$ we  denote the oriented simplex $(\alpha_0,\alpha_1,\dots, \alpha_n)$ with the opposite orientation. We can also give a \emph{canonical orientation} of $\alpha$ in such a way that the orientation of $\alpha$ agrees with the lexographical ordering of $\{\alpha_0,\alpha_1,\dots, \alpha_n\}$. For example, if $\alpha_0<\alpha_1<\dots<\alpha_n$, then the corresponding oriented simplex  $(\alpha_0,\alpha_1,\dots, \alpha_n)$ has the canonical orientation. Clearly, each simplex in $2^{[m]}$ can be naturally equipped with the canonical orientation.

Let $\alpha=(\alpha_0,\alpha_1,\dots, \alpha_n)$ be an oriented simplex  in $2^{[m]}$  where $n\geq 0$. The boundary operator $\partial$ on $\alpha$ is defined as follows:
\begin{equation}\label{formula1}
\partial (\alpha_0,\alpha_1,\dots, \alpha_n)\,=\,\sum_{i=0}^n(-1)^{i}(\alpha_0,\dots, \widehat{\alpha_i},\dots,\alpha_n).
\end{equation}
We may generalize the formula~\eqref{formula1} as follows: for any $k$ with $1\leq k\leq n+1$,
define
\[
\partial^{(k)}(\alpha_0,\alpha_1,\dots, \alpha_n)\,=\, \sum_{0\leq i_1<\dots <i_k\leq n}(-1)^{i_1+\dots+i_k}\left(\dots, \widehat{\alpha_{i_1}},\dots, \widehat{\alpha_{i_k}},\dots\right).
\]
Obviously $\partial^{(1)}$ is exactly the boundary operator $\partial$ as mentioned in~\eqref{formula1}.

With this understanding, now let us define a \emph{pair ordering} $\prec$ of all $d$-faces in a simplex $\alpha=(\alpha_0,\alpha_1,\dots, \alpha_n)$, where $0\leq d\leq n$.

Note that when $n=0$ or $d=n$, the collection of $d$-faces $\mathcal{F}^{\alpha}_d$ of $\alpha$ only contains a single face, so in this case, we don't need to consider the pair ordering. Without the loss of generality, we assume that  $n\geq 1$ and $0\leq d<n$. In addition, we assume that all faces of dimension less than $n$ are always equipped with the canonical orientation.

 \begin{definition}
Take two $d$-faces  $f$ and $g$ in $\mathcal{F}_d^{\alpha}$ such that $f<g$. If $f\cap g=\emptyset$,  we say that there is no pair ordering between $f$ and $g$. Assume that $f\cap g\ne\emptyset$ and $|f\cap g|=k+1$. Consider the coefficient $c_{f g}$ of the face $(f\cup g)\setminus (f\cap g)$, with its canonical orientation, in the expression of $\partial^{(n-2(d-k)+1)}\alpha$. Then the \emph{pair  ordering} between $f$ and $g$ is defined as follows:
 \begin{align*}
 g\prec f,\quad &\text{if $c_{f g}=+1$;}\\
 f\prec g,\quad &\text{if $c_{f g}=-1$.}
 \end{align*}
 \end{definition}

If $d=0$, then there is no pair ordering on $\mathcal{F}_0^{\alpha}$ since any two $0$-faces in $\mathcal{F}_0^{\alpha}$ have empty intersection. If $\dim \alpha=n=2h$, then any two faces in $\mathcal{F}^{\alpha}_h$ have a nonempty intersection because any middle-dimensional face $f$ in $\alpha$ contains $h+1$ vertices.

\begin{example}\label{ex1}
For example, when $n=2$, take $\alpha=(1, 2, 3)$. In this case, we have that $\mathcal{F}_1^{\alpha}=\{(12), (13), (23)\}$ with the lexographical  ordering $(12)<(13)<(23)$. From the formula $\partial^{(1)} (1,2,3)=(23)-(13)+(12)$, we can obtain the pair orderings for all faces in $\mathcal{F}_1^{\alpha}$ as follows:
\[
(2,3)\prec(1,3);\quad (1,3)\prec(1,2);\quad (1,2)\prec(2,3).
\]
\end{example}

\begin{example}\label{pair ordering}
When $n=3$ we take $\alpha=(1, 2, 3, 4)$. On $\mathcal{F}_2^{\alpha}$, one has the lexographical  ordering
\[
(1,2,3)\,<\,(1,2,4)\,<\,(1,3,4)\,<\,(2,3,4).
\]
From the formula
\begin{align*}
\partial^{(2)} (1,2,3,4)\,=\,-(3,4)+(2,4)-(2,3)-(1,4)+(1,3)-(1,2)
\end{align*}
we can obtain the pair orderings for all 2-faces of $\alpha$ as follows:
\begin{align*}
(1,3,4)\prec(2,3,4);\quad (1, 2,3)\prec(2,3, 4);\quad (2,3,4)\prec(1, 2,4);\\
(1,2,4)\prec (1,3,4);\quad (1,3,4)\prec (1,2,3);\quad (1,2,3)\prec(1,2,4).
\end{align*}
If we take  $\alpha=-(1, 2, 3, 4)$, then all 2-faces of $\alpha$ have the reverse pair orderings.
\end{example}


\subsection{Exchange Matrices}

Throughout the following, we use a convention of orientations and orderings on all simplices in each closed oriented triangulated manifold $K$ (as a subcomplex of $2^{[m]}$) of $\mathbb{TM}_n$. This convention is called the \emph{orientation-ordering convention}, which is stated as follows:
\begin{enumerate}
\item we orient all $n$-dimensional  simplices in $K$ such that their sum is a cycle chain in the chain group $C_n(K)$;
\item all other simplices of dimension less than $n$ are equipped with the canonical orientation as  defined before;
\item there is a natural lexographical  ordering $<$ on all simplices in $K\subseteq 2^{[m]}$.
\end{enumerate}

Given a $K\in \mathbb{TM}_n$, following the idea of~\cite[Definition 4.1]{fst}, we are going to define a skew-symmetric integer square matrix $B(K)$ of size $f_{n-1}(K)=|\mathcal{F}(K)|$ with entries corresponding to all $(n-1)$-dimensional simplices in $K$.

\begin{definition}\label{matrix}
Take any oriented $n$-simplex $\alpha\in \mathcal{F}_n^K$. We first define a skew-symmetric integer square matrix $B^{\alpha}(K)=(b^{\alpha}_{fg})_{f, g\in \mathcal{F}(K)}$ of size $f_{n-1}(K)$ by setting
\[
b^{\alpha}_{fg}\,=\
\begin{cases}
+1, & \text{if $f, g\in \mathcal{F}(\alpha)$ such that $g\prec f$ in $\mathcal{F}(\alpha)$};\\
-1, & \text{if $f, g\in \mathcal{F}(\alpha)$ such that $f\prec g$ in $\mathcal{F}(\alpha)$};\\
\phantom{+}0, & \text{otherwise.}
\end{cases}
\]
 Then the matrix $B(K)$ is defined by
\[
B(K)\,=\,\sum_{\alpha\in\mathcal{F}_n^K}  B^{\alpha}(K).
\]
\end{definition}

Note that the entries of the matrix $B(K)$ belong to $\{-1, 0, +1\}$. Definition~\ref{matrix} is a generalization of~\cite[Definition 4.1]{fst}.

\begin{remark}
For an oriented $n$-simplex $\alpha\in \mathcal{F}_n^K$, if we define $c_{gf}\letbe -c_{fg}$, for $f<g$, then we see that $b_{fg}=c_{fg}$, for all $f,g\in \mathcal{F}(\alpha)$.
\end{remark}

\begin{example}
Let $K$ be the boundary complex of $2^{[5]}$, which is 3-dimensional. Orient the five 3-simplices of $K$ by
\[
\partial^{(1)}(1,2,3,4,5)\,=\,(2,3,4,5)-(1,3,4,5)+(1,2,4,5)-(1,2,3,5)+(1,2,3,4).
\]
So $K$ has the following five oriented 3-simplices
\begin{align*}
\alpha_1\,=\,(2,3,4,5),\ \alpha_2\,=\,-(1,3,4,5),\ \alpha_3\,=\,(1,2,4,5),
\alpha_4\,=\,-(1,2,3,5),\ \alpha_5\,=\,(1,2,3,4)
\end{align*}
such that the sum $\alpha_1+\dots+\alpha_5$ is a cycle chain. Also, $K$ contains ten 2-simplices
\begin{align}\label{ten simplices}
&(1,2,3)\,<\,(1,2,4)\,<\,(1,2,5)\,<\,(1,3,4)\,<\,(1,3,5)\\
<\,&(1,4,5)\,<\,(2,3,4)\,<\,(2,3,5)\,<\,(2,4,5)\,<\,(3,4,5)\nonumber
\end{align}
ordered by the lexographical  ordering, so $B(K)$ is a skew-symmetric integer square matrix of size 10, with all rows and columns ordered by~\eqref{ten simplices}. First we can write out $B^{\alpha_i}(K)$ by using Example~\ref{pair ordering}, and then obtain
\[
B(K)\,=\,\left(
  \begin{array}{cccccccccc}
    0 & -1 & +1 & +1 & -1 & 0 & -1 &+1 & 0 & 0 \\
    +1 & 0 & -1 & -1& 0& +1 & +1 & 0 & -1& 0 \\
    -1 & +1 & 0 & 0& +1 & -1& 0  & -1 &+1 &0\\
    -1 & +1 & 0& 0 & +1 & -1& -1& 0 & 0& +1\\
    +1 & 0 & -1 & -1 & 0 & +1 & 0 & +1 & 0 & -1\\
    0 & -1 & +1 & +1 & -1 & 0 & 0 & 0 & -1 & +1\\
    +1 & -1& 0& +1 & 0 & 0 & 0 & -1 &+1 & -1\\
    -1 & 0 & +1 & 0 & -1 & 0 & +1 & 0 & -1 & +1\\
    0 & +1 & -1 & 0 & 0 & +1 & -1 & +1 & 0 & -1\\
    0 & 0 & 0 & -1 & +1 & -1 & +1 & -1 & +1 & 0
     \end{array}
\right).
\]
\end{example}


\section{Matrix Mutations under Bistellar Moves}

Given a closed oriented triangulated manifold $K$ of $\mathbb{TM}_n$ we regard $K$ as a subcomplex of $2^{[m]}$ where $m$ is the number of vertices of $K$.

Throughout this section, we assume that we can perform a bistellar $h$-move ${\bm}_\alpha$ on $K$ at an $(n-h)$-dimensional simplex $\alpha$, that is, there exists a bistellar pair $(\alpha, \beta)$ such that $\Link_K\alpha=\partial \beta$ but $\beta\not\in K$. We write
\[
L\,\letbe\, {\bf bm}_\alpha K.
\]
By Definition~\ref{matrix}, we see that
\[
B(K)\,=\,(b_{fg}^K)_{f,g\in \mathcal{F}(K)},\quad\text{where we write $\mathcal{F}(K)\letbe \mathcal{F}_{n-1}^K$,}
\]
is of size $f_{n-1}(K)$ and $B(L)=(b_{fg}^{L})_{f,g\in \mathcal{F}(L)}$ is of size $f_{n-1}(L)$. On the other hand, we know from Corollary~\ref{f-vector1} that $f_{n-1}(K)=f_{n-1}(L)$ if and only if $n=2h$. Thus, we will now concentrate on the case $f_{n-1}(K)=f_{n-1}(L)$ and consider how $B(K)$ changes into $B(L)=B({\bm}_\alpha K)$ under the bistellar move ${\bm}_\alpha$.


\subsection{Defining the Matrix Mutation}

In the case of $f_{n-1}(K)=f_{n-1}(L)$, we have $n=2h$, and then $\dim\alpha=\dim\beta=h$.
Without loss of generality, assume that
\[
\alpha\,=\,(v_0,v_1,\dots,v_h) \quad\text{and}\quad \beta\,=\,(v_{h+1},v_{h+2},\dots,v_{n+1}),
\]
and the oriented $n$-simplex $(v_0,v_1,\dots,v_{n})\in K$ is compatible with the given orientations of the $n$-simplices of $K$. Then 
${{\bf F}_\alpha}=\mathcal{F}_{n}^{\Lambda_{\alpha}}$ must consist of
the following $h+1$ oriented simplices of dimension~$n$:
\[
F_i\,=\, (-1)^{i}(v_0,v_1,\dots,v_h,v_{h+1},\dots,\widehat{v_{n+1-i}},\dots,v_{n+1}),\quad \text{for $i=0,\dots, h$,}
\]
such that $\alpha$ is the intersection of the $F_i$, for $i=0,\dots, h$. Since ${\bm}_\alpha$ only changes $\Lambda_\alpha$ into ${\bm}_\alpha \Lambda_\alpha=\Lambda_\beta$ the unchanged $n$-simplices
\[
\mathcal{F}_n^K\setminus {\bf F}_\alpha\,=\,\mathcal{F}_n^{L}\setminus{\bf H}_\beta,
\]
will keep their orientations, and the  $h+1$ oriented simplices of dimension $n$ in ${{\bf H}_\beta}=\mathcal{F}_{n}^{\Lambda_{\beta}}$ will be
\[
H_i\,=\,(-1)^{i}(v_0,v_1,\dots,\widehat{v_i},\dots,v_h,v_{h+1},\dots,v_{n+1}),\quad\text{for $i=0,\dots, h$,}
\]
such that $\beta$ is the intersection of the $H_i$, for $i=0,\dots,h$. This makes sure that
$L={\bm}_\alpha K$ satisfies the orientation-ordering convention.

\begin{lemma}\label{trivial part}
For any $F\in \mathcal{F}_n^K\setminus{\bf F}_\alpha$ and any $f,g\in \mathcal{F}(F)$, we have that $b_{fg}^K=b^{L}_{fg}$. Furthermore,
\[
\sum_{F\in \mathcal{F}_n^K\setminus {\bf F}_\alpha}B^F(K)\,=\,\sum_{F\in \mathcal{F}_n^{L}\setminus{\bf H}_\beta}B^F(L).
\]
\end{lemma}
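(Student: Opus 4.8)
The plan is to prove the entrywise statement first and then deduce the matrix identity almost formally. The guiding observation is that the local contribution $b^{F}_{fg}$ attached to a fixed oriented $n$-simplex $F$ is an intrinsic datum of $F$ together with the ambient vertex ordering, and therefore cannot be disturbed by a move that fixes both $F$ and the vertex set.

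First I would reduce the global entry $b^{K}_{fg}$ to the single local term $b^{F}_{fg}(K)$. For distinct $f,g\in\mathcal{F}(F)$ both are codimension-one faces of the $n$-simplex $F$, so they meet in an $(n-2)$-face and
\[
|f\cup g|\,=\,|f|+|g|-|f\cap g|\,=\,n+n-(n-1)\,=\,n+1,
\]
whence $f\cup g=F$. Thus $F$ is the unique $n$-simplex of $K$ (equally, of $L$) having both $f$ and $g$ among its facets. Since $b^{\gamma}_{fg}=0$ unless $f,g\in\mathcal{F}(\gamma)$ by Definition~\ref{matrix}, the defining sum $B(K)=\sum_{\gamma}B^{\gamma}(K)$ collapses to $b^{K}_{fg}=b^{F}_{fg}(K)$, and identically $b^{L}_{fg}=b^{F}_{fg}(L)$.

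Next I would check that $b^{F}_{fg}(K)=b^{F}_{fg}(L)$. By the definition of the pair ordering, the sign $b^{F}_{fg}$ is decided solely by the oriented simplex $F$, the lexographical order on its vertices (used to compare $f$ and $g$), the canonical orientations of $f$ and $g$ (which depend only on their vertex sets), and the coefficient of $(f\cup g)\setminus(f\cap g)$, with its canonical orientation, in $\partial^{(k)}F$ for the appropriate $k$; none of these reference the ambient complex. Because a bistellar $h$-move with $n=2h$ neither creates nor destroys vertices, the vertex set $[m]$ and its ordering are shared by $K$ and $L$, and by construction the unchanged simplex $F\in\mathcal{F}_n^K\setminus{\bf F}_\alpha=\mathcal{F}_n^{L}\setminus{\bf H}_\beta$ retains precisely its orientation in $L$. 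Hence every ingredient computing the pair ordering of $f$ and $g$ inside $\mathcal{F}(F)$ is the same in both manifolds, so $b^{F}_{fg}(K)=b^{F}_{fg}(L)$ and therefore $b^{K}_{fg}=b^{L}_{fg}$, proving the first assertion.

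For the remaining identity I would invoke the set equality $\mathcal{F}_n^K\setminus{\bf F}_\alpha=\mathcal{F}_n^{L}\setminus{\bf H}_\beta$ recorded in the proof of Proposition~\ref{f-vector}, so that the two sums range over the same oriented $n$-simplices $F$. Fixing such an $F$, it is a simplex of both $K$ and $L$, so all of its facets lie in $\mathcal{F}(K)\cap\mathcal{F}(L)$; consequently $B^{F}(K)$ and $B^{F}(L)$ carry their only nonzero entries in the same rows and columns, and those entries agree by the first part, giving $B^{F}(K)=B^{F}(L)$ as matrices supported on the common faces. Summing over $F$ yields the claim. The one point demanding care, and the only genuine obstacle, is the bookkeeping in the reduction $b^{K}_{fg}=b^{F}_{fg}(K)$ together with the verification that the pair ordering is truly intrinsic to $F$ and the fixed vertex ordering; once this is in place the matrix identity is formal, since matching the index sets through $\mathcal{F}_n^K\setminus{\bf F}_\alpha=\mathcal{F}_n^{L}\setminus{\bf H}_\beta$ is immediate.
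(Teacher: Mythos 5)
Your proof is correct and rests on exactly the same fact the paper's one-line proof invokes, namely $\mathcal{F}_n^K\setminus{\bf F}_\alpha=\mathcal{F}_n^{L}\setminus{\bf H}_\beta$; you simply supply the details the paper leaves implicit (that $f\cup g=F$ forces each entry to come from a unique $n$-simplex, and that the pair ordering is intrinsic to the oriented simplex and the fixed vertex ordering). No gaps.
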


\begin{proof}
This is because $\mathcal{F}_n^K\setminus {\bf F}_\alpha=\mathcal{F}_n^{L}\setminus{\bf H}_\beta$.
\end{proof}

By Lemma~\ref{trivial part},  it remains to study how $\sum_{F\in {\bf F}_\alpha}B^F(K)$ changes into
$\sum_{F\in {\bf H}_\beta}B^F(L).$ To do this, we need to further analyze the structures of both subcomplexes $\Lambda_\alpha$ and $\Lambda_\beta$. An easy observation shows that both $\Lambda_\alpha$ and $\Lambda_\beta$ have $\binom{h+1}{1}\binom{h+1}{1}+\binom{h+1}{2}$ simplices of dimension $n-1$, and there are $\binom{h+1}{1}\binom{h+1}{1}$ simplices of dimension $n-1$ that lie in both $\Lambda_\alpha$ and $\Lambda_\beta$.  Let
\begin{align*}
\mathcal{D}_{\alpha}\,\letbe\,& \mathcal{F}(\Lambda_{\alpha})\setminus \mathcal{F}(\Lambda_{\beta})\\
\,=\,&\{(v_0,v_1,\dots,v_h,v_{h+1},\dots,\widehat{v_{h+1+i}},\dots,\widehat{v_{h+1+j}},\dots,v_{n+1})\mid 0\leq i<j\leq h\}
\end{align*}
which consists of $\binom{h+1}{2}$ simplices of dimension $n-1$ in $\Lambda_\alpha$. Similarly,
\begin{align*}
\mathcal{D}_{\beta}\,\letbe\,& \mathcal{F}(\Lambda_{\beta})\setminus \mathcal{F}(\Lambda_{\alpha})\\
\,=\,&\{(v_0,v_1,\dots,\widehat{v_i},\dots,\widehat{v_j},\dots,v_h,v_{h+1},\dots,v_{n+1})\mid 0\leq i<j\leq h\}
\end{align*}
consists of $\binom{h+1}{2}$ simplices of dimension $n-1$ in $\Lambda_\beta$. Obviously, $\mathcal{D}_{\alpha}\cap \mathcal{D}_{\beta}=\emptyset$. Then we have that
\begin{equation}\label{compl}
\mathcal{F}(\Lambda_\alpha)\setminus \mathcal{D}_{\alpha}\,=\,\mathcal{F}(\Lambda_\beta)\setminus \mathcal{D}_{\beta}
\end{equation}
which contains exactly $\binom{h+1}{1}\binom{h+1}{1}$  $(n-1)$-simplices.

\begin{lemma}\label{entry}
Suppose $g\notin \mathcal{F}(\Lambda_\alpha)$. Then for any  $f\in \mathcal{D}_{\alpha}$, $b_{fg}^K=0$ in $B(K)$, and for any  $f\in \mathcal{D}_{\beta}$, $b_{fg}^{L}=0$ in $B(L)$.
\end{lemma}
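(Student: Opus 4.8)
The plan is to reduce the claim to a purely combinatorial statement about which $n$-simplices can contain a given facet, and then read off that information directly from the bistellar (link) condition. Recall from Definition~\ref{matrix} that $b_{fg}^K=\sum_{\gamma\in\mathcal{F}_n^K}b_{fg}^{\gamma}$, and that $b_{fg}^{\gamma}\neq 0$ only when both $f$ and $g$ are $(n-1)$-faces of the same oriented $n$-simplex $\gamma$. Hence it suffices to show that no $n$-simplex of $K$ has both $f$ and $g$ among its facets; the identical reduction applies to $B(L)$.

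For the first assertion, fix $f\in\mathcal{D}_{\alpha}$. By the explicit description of $\mathcal{D}_{\alpha}$, the vertex set of $f$ contains all of $\alpha=\{v_0,\dots,v_h\}$, so any $n$-simplex $\gamma$ of $K$ with $f\subset\gamma$ also contains $\alpha$. I would then invoke the bistellar hypothesis $\Link_K\alpha=\partial\beta$: since $\gamma$ contains $\alpha$, the complementary face $\gamma\setminus\alpha$ is a top-dimensional simplex of $\Link_K\alpha=\partial\beta$, i.e.\ a facet of $\beta$, so $\gamma=\alpha\cup(\beta\setminus\{v_k\})$ is one of $F_0,\dots,F_h$ and in particular lies in $\Lambda_\alpha$. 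Consequently every facet of $\gamma$ lies in $\mathcal{F}(\Lambda_\alpha)$; since $g\notin\mathcal{F}(\Lambda_\alpha)$, $g$ cannot be a facet of $\gamma$, so $b_{fg}^{\gamma}=0$ for every $\gamma$ and hence $b_{fg}^K=0$.

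The second assertion I would prove by the symmetric argument in $L={\bf bm}_\alpha K$: now $f\in\mathcal{D}_{\beta}$ contains all of $\beta$, the roles of $\alpha$ and $\beta$ are interchanged through the bistellar pair $(\beta,\alpha)$ in $L$ (so that $\Link_L\beta=\partial\alpha$), and the $n$-simplices of $L$ containing $\beta$ are exactly $H_0,\dots,H_h\in\Lambda_\beta$. Thus every $n$-simplex of $L$ containing $f$ has all of its facets in $\mathcal{F}(\Lambda_\beta)$. To close the argument from the stated hypothesis I would use relation (\ref{compl}), namely $\mathcal{F}(\Lambda_\alpha)\setminus\mathcal{D}_{\alpha}=\mathcal{F}(\Lambda_\beta)\setminus\mathcal{D}_{\beta}$: for a simplex $g$ common to $K$ and $L$ with $g\notin\mathcal{F}(\Lambda_\alpha)$, this identity yields $g\notin\mathcal{F}(\Lambda_\beta)$ as well, whence $g$ is not a facet of any $H_i$ and $b_{fg}^{L}=0$.

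The main obstacle is the structural step identifying the closed star of $\alpha$ (resp.\ $\beta$): one must argue carefully that $\Link_K\alpha=\partial\beta$ forces every $n$-simplex containing $\alpha$ to be one of the $F_i$, using that $\beta$ is $h$-dimensional and $n=2h$ so that the top faces of $\partial\beta$ are precisely the vertex-deletions of $\beta$. Everything else is bookkeeping with the explicit vertex descriptions of $\mathcal{D}_{\alpha}$, $\mathcal{D}_{\beta}$ and relation (\ref{compl}); the one point needing attention in the second part is to apply the hypothesis $g\notin\mathcal{F}(\Lambda_\alpha)$ to a simplex lying in both complexes, so that (\ref{compl}) can legitimately transfer it to $\mathcal{F}(\Lambda_\beta)$.
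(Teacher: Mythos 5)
Your proof is correct and follows essentially the same route as the paper's: the paper's one-line proof simply asserts that no simplex of $\mathcal{D}_{\alpha}$ (resp.\ $\mathcal{D}_{\beta}$) is a face of any $n$-simplex outside ${\bf F}_\alpha$ (resp.\ ${\bf H}_\beta$), and you supply the justification of that assertion via the link condition $\Link_K\alpha=\partial\beta$. The extra care you take in transferring $g\notin\mathcal{F}(\Lambda_\alpha)$ to $g\notin\mathcal{F}(\Lambda_\beta)$ for $g$ common to $K$ and $L$ is a reasonable (and correct) reading of the hypothesis that the paper leaves implicit.
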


\begin{proof}
This is because each $(n-1)$-simplex in $\mathcal{D}_{\alpha}$ is not a face of any $n$-simplex in $\mathcal{F}_n^K\setminus {{\bf F}_\alpha}$ and  each $(n-1)$-simplex in $\mathcal{D}_{\beta}$ is not a face of any $n$-simplex in $\mathcal{F}_n^{L}\setminus{{\bf H}_\beta}$.
\end{proof}

By Corollary~\ref{com-equ}, $\Lambda_\alpha$ and $\Lambda_\beta$ are combinatorially equivalent.

\begin{lemma}\label{permutation}
Let $\sigma$ be a permutation on $[n+2]$ such that $\sigma(\alpha)=\beta$. Then $\sigma$ induces a combinatorial equivalence between $\Lambda_\alpha$ and $\Lambda_\beta$.
\end{lemma}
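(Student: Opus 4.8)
*Let $\sigma$ be a permutation on $[n+2]$ such that $\sigma(\alpha)=\beta$. Then $\sigma$ induces a combinatorial equivalence between $\Lambda_\alpha$ and $\Lambda_\beta$.*

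Let me think about what this statement is actually claiming and how to prove it.The plan is to show that a permutation $\sigma$ with $\sigma(\alpha)=\beta$ must simply interchange the two blocks of vertices $V(\alpha)=\{v_0,\dots,v_h\}$ and $V(\beta)=\{v_{h+1},\dots,v_{n+1}\}$, and that under such a block-swap the join $\alpha\ast\partial\beta$ is carried isomorphically onto $\partial\alpha\ast\beta$. The whole argument then reduces to unwinding the two join descriptions and checking that $\sigma$ respects the relevant face sets.

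First I would pin down the action of $\sigma$ on the two blocks. Since $n=2h$, the vertex set $[n+2]$ of $\alpha\ast\beta$ has exactly $n+2=2h+2$ elements, while $V(\alpha)$ and $V(\beta)$ each have $h+1$ elements and are disjoint (recall $\alpha\cap\beta=\emptyset$, established in the proof of Proposition~\ref{f-vector}). Hence $[n+2]=V(\alpha)\sqcup V(\beta)$. Because $\sigma$ is a bijection with $\sigma(V(\alpha))=V(\beta)$, it follows that $\sigma(V(\beta))=[n+2]\setminus\sigma(V(\alpha))=[n+2]\setminus V(\beta)=V(\alpha)$, so $\sigma$ interchanges the two blocks. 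This is the only place where the hypothesis $n=2h$ (equivalently Corollary~\ref{com-equ}) is genuinely used.

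Next I would unwind the join descriptions. A face of $\Lambda_\alpha=\alpha\ast\partial\beta$ has the form $A\cup B$ with $A\subseteq V(\alpha)$ arbitrary and $B\subsetneq V(\beta)$ a proper subset; a face of $\Lambda_\beta=\partial\alpha\ast\beta$ has the form $A'\cup B'$ with $A'\subsetneq V(\alpha)$ proper and $B'\subseteq V(\beta)$ arbitrary. Applying the simplicial map induced by $\sigma$ to $A\cup B$ gives $\sigma(A)\cup\sigma(B)$, where $\sigma(A)\subseteq V(\beta)$ and $\sigma(B)\subseteq V(\alpha)$; since $\sigma$ preserves cardinalities, the properness $B\subsetneq V(\beta)$ forces $\sigma(B)\subsetneq V(\alpha)$ to remain proper. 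Thus $\sigma(A\cup B)$ is precisely a face of $\Lambda_\beta$, and this correspondence is visibly onto. Because $\sigma$ is a bijection on vertices, $\sigma$ and its inverse $\sigma^{-1}$ are mutually inverse simplicial maps carrying $\Lambda_\alpha$ to $\Lambda_\beta$ and back, which is exactly the required combinatorial equivalence.

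The argument is essentially bookkeeping once the block-swap is observed, so I do not anticipate any deep obstacle. The only point requiring genuine care is the cardinality check that keeps ``proper subset'' mapping to ``proper subset'': this is what guarantees that the $\partial\beta$ factor of $\Lambda_\alpha$ lands in the full-simplex factor $\beta$ of $\Lambda_\beta$, while the full-simplex factor $\alpha$ of $\Lambda_\alpha$ lands in the boundary factor $\partial\alpha$. Getting this direction of the interchange straight is the whole content of the verification.
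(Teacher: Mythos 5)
Your proof is correct and follows essentially the same route as the paper's one-line argument (which appeals to the shared vertex set $[n+2]$ and the descriptions $\alpha=\bigcap_i F_i$, $\beta=\bigcap_i H_i$): the core point in both is that $\sigma$ must swap the vertex blocks of $\alpha$ and $\beta$, so the join $\alpha\ast\partial\beta$ is carried onto $\partial\alpha\ast\beta$. Your version simply spells out the cardinality bookkeeping (proper subsets go to proper subsets) that the paper leaves implicit.
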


\begin{proof}
This is because $\Lambda_\alpha$ and $\Lambda_\beta$ have the same vertex set $[n+2]$, and $\alpha
=\bigcap_{i=0}^{h}F_i$ and $\beta=\bigcap_{i=0}^{h}H_i$.
\end{proof}

\begin{remark}
If $\sigma$ is a permutation on $[n+2]$ such that $\sigma(\alpha)=\beta$, then it is easy to see that $\sigma$ not only gives a bijection from $\mathcal{F}(\Lambda_\alpha)\to \mathcal{F}(\Lambda_\beta)$, but is also invariant on $\mathcal{F}(\Lambda_\alpha)\setminus\mathcal{D}_{\alpha}$ and
maps $\mathcal{D}_{\alpha}$ onto $\mathcal{D}_{\beta}$ bijectively. Note that the permutation $\sigma$ on $[n+2]$ with $\sigma(\alpha)=\beta$ is, of course, not unique.
\end{remark}

We fix the following permutation in the symmetric group $\mathcal{S}_{n+2}$:
\[
\sigma\,=\,
\left(
  \begin{array}{ccccc}
    1 & 2 & \cdots & n+1 & n+2\\
    n+2 & n+1 & \cdots & 2 & 1\\
     \end{array}
\right)
\,\in\, \mathcal{S}_{n+2}.
\]
Note that $\sigma(\alpha)=\beta$.

\begin{definition}\label{mm-def}
We define the \emph{matrix mutation}
\[
\mu_{\alpha}(B(K))\,=\, (\bar{b}_{fg})_{f,g\in \mathcal{F}(L)}
\]
along $\alpha$ as follows:
\[
\bar{b}_{fg}\,=\,
\begin{cases}
-b^K_{\sigma(f)\sigma(g)}, &\text{if $f,g\in \mathcal{F}(\Lambda_\beta)$;}\\
\phantom{-}b^K_{fg}, &\text{otherwise.}
\end{cases}
\]
\end{definition}

\begin{remark}
The permutation $\sigma=
\left(
  \begin{array}{ccccc}
    1 & 2 & \cdots & n+1 & n+2\\
    n+2 & n+1 & \cdots & 2 & 1\\
     \end{array}
\right)$ in $\mathcal{S}_{n+2}$ is of order~2.
\end{remark}

 \begin{proposition}\label{mutation}
 If $L=\bm_{\alpha} K$, where $K\in \mathbb{TM}_{2h}$ and $\alpha\in \mathcal{F}_h^K$, then
 \[
 \mu_{\alpha}(B(K))\,=\, B(L).
 \]
 \end{proposition}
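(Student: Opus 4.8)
The plan is to prove $\mu_\alpha(B(K))=B(L)$ entrywise: for every pair $f,g\in\mathcal F(L)$ I would show that the mutated entry $\bar b_{fg}$ of Definition~\ref{mm-def} equals $b^L_{fg}$, and argue that in each case this reduces to a comparison involving only the local simplices $\mathbf F_\alpha=\{F_0,\dots,F_h\}$ and $\mathbf H_\beta=\{H_0,\dots,H_h\}$. First I would dispose of the ``otherwise'' entries, namely those with at least one of $f,g$ lying in $R\letbe\mathcal F(L)\setminus\mathcal F(\Lambda_\beta)=\mathcal F(K)\setminus\mathcal F(\Lambda_\alpha)$. Such a face is a facet of no local simplex (neither an $F_i$ nor an $H_i$), so every $n$-simplex having it as a facet belongs to the common set $\mathcal F_n^K\setminus\mathbf F_\alpha=\mathcal F_n^L\setminus\mathbf H_\beta$. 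Hence $b^L_{fg}$ and $b^K_{fg}$ are computed entirely from the common part and agree by Lemma~\ref{trivial part} (with the convention $b^K_{fg}=0=b^L_{fg}$ when the other index lies in the new faces $\mathcal D_\beta$, which meet no common simplex), matching $\bar b_{fg}=b^K_{fg}$.

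The substance is the case $f,g\in\mathcal F(\Lambda_\beta)$, where I must prove $b^L_{fg}=-b^K_{\sigma(f)\sigma(g)}$. I would begin with a structural reduction to the local simplices: two distinct $(n-1)$-faces lying in $\mathcal F(\Lambda_\beta)$ are never both facets of a common (non-local) $n$-simplex. Indeed, such a simplex would have to equal $f\cup g=\{v_0,\dots,v_{n+1}\}\setminus\{v_c\}$, hence would either contain $\beta$ (impossible, since the only simplices of $L$ containing $\beta$ are the $H_i$) or contain all of $\alpha$ (impossible in $L=\bm_\alpha K$, all of whose simplices containing $\alpha$ were removed). The symmetric statement holds for $\sigma(f),\sigma(g)\in\mathcal F(\Lambda_\alpha)$ in $K$. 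Consequently the non-local contributions to $b^L_{fg}$ and to $b^K_{\sigma(f)\sigma(g)}$ both vanish, and since a pair of facets determines at most one $n$-simplex, the identity collapses to the per-simplex statement
\[
b^{H_i}_{fg}\,=\,-\,b^{F_i}_{\sigma(f)\sigma(g)},\qquad i=0,\dots,h,
\]
whose two sides have matching support because $\sigma$ carries the facets of $F_i$ bijectively onto those of $H_i$ (indeed $\sigma(F_i)=(-1)^h H_i$ as oriented simplices).

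To establish this per-simplex identity I would first record a closed form for the entries. Writing an oriented $n$-simplex as $F=\epsilon_F(w_0,\dots,w_n)$ and two facets as $x=F\setminus w_{s_x}$, $y=F\setminus w_{s_y}$, I claim
\[
b^F_{xy}\,=\,\epsilon_F\,\operatorname{sgn}(s_x-s_y)\,(-1)^{\binom{n+1}{2}-s_x-s_y},
\]
so that $b^F_{xy}$ depends only on the orientation of $F$ and on the positions of the omitted vertices. Granting it, the identity follows by a position count: with $F_i=(-1)^iT_i$ and $H_i=(-1)^iU_i$ for the listings $T_i,U_i$ obtained by deleting $v_{n+1-i}$ and $v_i$ from $(v_0,\dots,v_{n+1})$, the reversal $\sigma(v_j)=v_{n+1-j}$ sends a facet of $H_i$ omitting $v_c$ to the facet of $F_i$ omitting $v_{n+1-c}$, and $\operatorname{pos}_{U_i}(c)+\operatorname{pos}_{T_i}(n+1-c)=n$. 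This makes the two exponents $\binom{n+1}{2}-s_x-s_y$ agree modulo $2$ while flipping the factor $\operatorname{sgn}(s_x-s_y)$, which is exactly the global minus sign required.

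The main obstacle is the sign analysis. The harder half is the derivation of the closed form itself: one must reconcile the two conventions hidden in the pair ordering $\prec$ — the canonical orientation of the distinguishing edge $(x\cup y)\setminus(x\cap y)$ and the lexicographic ordering used to break the antisymmetry — and show their product is precisely $\operatorname{sgn}(s_x-s_y)$ independently of the actual vertex labels. The second half is verifying that the reversal sign $(-1)^h$, the position parities, and this sign flip combine to a uniform $-1$ for every $i$ and every admissible pair of facets, rather than to a factor depending on the parity of $h$. A useful sanity check throughout is the case $n=2$, where the computation must recover the flip of Fomin--Shapiro--Thurston and thereby pins the overall normalization.
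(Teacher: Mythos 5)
Your proposal is correct and follows essentially the same route as the paper's proof: an entrywise verification that reduces, via Lemma~\ref{trivial part} and Lemma~\ref{entry}, to the local simplices $F_i\leftrightarrow H_i$, where the order-reversing $\sigma$ preserves the parity of vertex positions (because $\dim F_i=n$ is even) while reversing the lexicographic comparison, producing the uniform sign flip. Your version is simply more explicit than the paper's four-line argument --- in particular the closed form $b^F_{xy}=\epsilon_F\,\operatorname{sgn}(s_x-s_y)(-1)^{\binom{n+1}{2}-s_x-s_y}$ and the position identity $\operatorname{pos}_{U_i}(c)+\operatorname{pos}_{T_i}(n+1-c)=n$ both check out and make the sign bookkeeping airtight.
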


\begin{proof}
Suppose that $f<g$ in $\mathcal{F}(\Lambda_\alpha)$ and that $f\cup g=F_i$, for some $i=0,\dots,h$. Then $\sigma(g)<\sigma(f)$ and $\sigma(F_i)=H_i$. The coefficient $c_{fg}$ appears in $\partial^{(n-1)}F_i$. If $v_i\in f\cap g$ appears in an even position in $F_i$, then so will $\sigma(v_i)$ in $H_i$ (and similarly for odd positions) since $\dim F_i=n$ is even. Therefore, $c_{fg}=c_{\sigma(g)\sigma(f)}$ which gives the result.

If either $f$ or $g$ is not in $\mathcal{F}(\Lambda_\alpha)$, then this follows from Lemma~\ref{trivial part}.
\end{proof}

\begin{remark}
Since ${\bm}_{\beta} L={\bm}_\beta{\bm}_\alpha K=K$, it is easy to see that $\mu_{\alpha}$ has an inverse $\mu_{\beta}$, that is,
\[
\mu_{\beta}(\mu_{\alpha}(B(K)))\,=\, \mu_{\beta}(B(L))\,=\,B(K).
\]
\end{remark}


\subsection{Examples}

Let us now look at the special cases of $h=1$ and $2$.

\subsubsection*{The case $h=1$}

When $h=1$, we have that $\alpha=(1,2)$ and $\beta=(3,4)$, so
\[
{{\bf F}_\alpha}\,=\,\{F_0=(1,2,3),\ F_1=-(1,2,4)\},\quad {{\bf H}_\beta}=\{H_0=(2, 3,4),\ H_1=-(1,3,4)\}
\]
and
\[
\mathcal{F}(\Lambda_\alpha)\,=\,\{(1,2), (1,3), (1,4), (2,3), (2,4)\},\quad
\mathcal{F}(\Lambda_\beta)\,=\,\{(1,3), (1,4), (2,3), (2,4), (3,4)\}.
\]
Also, $\mathcal{D}_{\alpha}=\{(1,2)\}$ and $\mathcal{D}_{\beta}=\{(3,4)\}$. Furthermore, $B(\Lambda_\alpha)$ and $B(\Lambda_\beta)$ determine  nontrivial $5\times 5$ matrices. By a direct calculation, we have that
\[
B(\Lambda_\alpha)\,=\,\left(
  \begin{array}{ccccc}
    0 & 1 & -1 & -1 & 1  \\
    -1 & 0 & 0 & 1 & 0 \\
    1 & 0 & 0 & 0 & -1\\
    1 & -1& 0 & 0 & 0\\
    -1 & 0 & 1 & 0 & 0\\
         \end{array}
\right)
\]
with all rows and columns  ordered by $(1,2), (1,3), (1,4), (2,3),  (2,4)$, and
\[
B(\Lambda_\beta)\,=\,\left(
  \begin{array}{ccccc}
    0 & -1 & 1 & 1 & -1  \\
    1 & 0 & -1 & 0 & 0 \\
    -1 & 1 & 0 & 0 & 0\\
    -1 & 0 & 0 & 0 & 1\\
    1 & 0 & 0 & -1 & 0\\
         \end{array}
\right)
\]
with all rows and columns  ordered by $(3,4), (1,3), (1,4), (2,3), (2,4)$.

Here
 $\sigma=\left(
  \begin{array}{ccccc}
    1 & 2 & 3 & 4  \\
    4 & 3 & 2  & 1 \\
     \end{array}
\right)$, and $\sigma$  maps $\mathcal{F}(\Lambda_\alpha)$ bijectively onto $\mathcal{F}(\Lambda_\beta)$, as shown in the following table:
\begin{center}
\begin{tabular}{|c|c|c|c|c|c|}
\hline &&&&&\\
$f$ & (1,2)& (1,3)& (1,4) &(2,3)& (2,4)\\
\hline
$\sigma(f)$ & (3,4)& (2,4)&(1,4)&(2,3) & (1,3) \\
\hline
\end{tabular}
\end{center}
Then we can read out that
\[
b_{\sigma(f)\sigma(g)}^{L}\,=\, -b_{fg}^K.
\]

\subsubsection*{The case $h=2$}

When $h=2$, we have  that $\alpha=(1, 2, 3)$ and $\beta=(4,5, 6)$, so
\[
{{\bf F}_\alpha}\,=\, \{F_0=(1, 2, 3, 4, 5),\ F_1=-( 1, 2,3,4, 6),\  F_2= (1,2,3,5,6)\}
\]
 and
 \[
 {{\bf H}_\beta}\,=\,\{H_0=(2, 3, 4, 5, 6),\ H_1=-( 1,3,4,5, 6),\  H_2= (1,2,4,5,6)\}.
 \]
Furthermore, $\mathcal{F}(\Lambda_\alpha)$ consists of twelve 3-simplices
 \begin{align*}
&(1,2,3,4), (1,2,3,5), (1,2,3,6), (1,2,4, 5), (1,2,4,6), (1,2,5,6),\\
&(1,3,4,5), (1,3,4,6), (1,3,5,6), (2,3,4,5), (2,3,4,6), (2,3,5,6)
\end{align*}
in $\Lambda_\alpha\subset K$, and $\mathcal{F}(\Lambda_\beta)$ consists of twelve 3-simplices
\begin{align*}
&(1,4,5,6), (2,4,5,6), (3,4,5,6), (1,2,4, 5), (1,2,4,6), (1,2,5,6),\\
&(1,3,4,5), (1,3,4,6), (1,3,5,6), (2,3,4,5), (2,3,4,6), (2,3,5,6)
\end{align*}
in $\Lambda_\beta\subset L$. Also,
\begin{align*}
\mathcal{D}_{\alpha}\,=\,\{(1,2,3,4), (1,2,3,5), (1,2,3,6)\} \quad\text{and}\quad
\mathcal{D}_{\beta}\,=\,\{(1,4, 5, 6), (2, 4, 5,6), (3,4,5,6)\}.
\end{align*}
In this case, $\mathcal{F}(\Lambda_\alpha)\setminus \mathcal{D}_{\alpha}=\mathcal{F}(\Lambda_\beta)\setminus \mathcal{D}_{\beta}$ contains nine $3$-simplices
\begin{align*}
(1,2,4, 5), (1,2,4,6), (1,2,5,6), (1,3,4,5),
(1,3,4,6), (1,3,5,6), (2,3,4,5), (2,3,4,6), (2,3,5,6)
\end{align*}
in $\Lambda_\alpha$ and $\Lambda_\beta$. Both $B(\Lambda_\alpha)$ and $B(\Lambda_\beta)$ determine $12\times 12$ skew-symmetric matrices. A direct calculation shows that
\[
B(\Lambda_\alpha)\,=\,\left(
 \begin{array}{cccccccccccc}
  0 & -1 & 1 & 1 & -1 &  0 &  -1 &  1 &  0 &  1 & -1 &  0 \\
 1 & 0 & -1 & -1 & 0 & 1 & 1 & 0 & -1 & -1 & 0 & 1 \\
 -1 & 1 & 0 & 0 & 1 & -1 & 0 & -1 & 1 & 0 & 1 & -1 \\
 -1 & 1 & 0 & 0 & 0 & 0 & -1 & 0 & 0 & 1 & 0 & 0 \\
 1 & 0 & -1 & 0 & 0 & 0 & 0 & 1 & 0 & 0 & -1 & 0 \\
 0 & -1 & 1 & 0 & 0 & 0 & 0 & 0 & -1 & 0 & 0 & 1 \\
 1 & -1 & 0 & 1 & 0 & 0 & 0 & 0 & 0 & -1 & 0 & 0 \\
 -1 & 0 & 1 & 0 & -1 & 0 & 0 & 0 & 0 & 0 & 1 & 0 \\
 0 & 1 & -1 & 0 & 0 & 1 & 0 & 0 & 0 & 0 & 0 & -1 \\
 -1 & 1 & 0 & -1 & 0 & 0 & 1 & 0 & 0 & 0 & 0 & 0 \\
 1 & 0 & -1 & 0 & 1 & 0 & 0 & -1 & 0 & 0 & 0 & 0 \\
 0 & -1 & 1 & 0 & 0 & -1 & 0 & 0 & 1 & 0 & 0 & 0 \\
 \end{array}\right)
\]
with all rows and columns ordered by $(1,2,3,4)$, $(1,2,3,5)$, $(1,2,3,6)$, $(1,2,4, 5)$, $(1,2,4,6)$, $(1,2,5,6)$, $(1,3,4,5)$, $(1,3,4,6)$, $(1,3,5,6)$, $(2,3,4,5)$, $(2,3,4,6)$, $(2,3,5,6)$, and
\[
B(\Lambda_\beta)\,=\,\left(
 \begin{array}{cccccccccccc}
  0 & -1 & 1 & 1 & -1 &  1 &  -1 &  1 &  -1 &  0 & 0 &  0 \\
 1 & 0 & -1 & -1 & 1 & -1 & 0 & 0 & 0 & 1 & -1 & 1 \\
 -1 & 1 & 0 & 0 & 0 & 0 & 1 & -1 & 1 & -1 & 1 & -1 \\
 -1 & 1 & 0 & 0 & -1 & 1 & 0 & 0 & 0 & 0 & 0 & 0 \\
 1 & -1 & 0 & 1 & 0 & -1 & 0 & 0 & 0 & 0 & 0 & 0 \\
 -1 & 1 & 0 & -1 & 1 & 0 & 0 & 0 & 0 & 0 & 0 & 0 \\
 1 & 0 & -1 & 0 & 0 & 0 & 0 & 1 & -1 & 0 & 0 & 0 \\
 -1 & 0 & 1 & 0 & 0 & 0 & -1 & 0 & 1 & 0 & 0 & 0 \\
 1 & 0 & -1 & 0 & 0 & 0 & 1 & -1 & 0 & 0 & 0 & 0 \\
 0 & -1 & 1 & 0 & 0 & 0 & 0 & 0 & 0 & 0 & -1 & 1 \\
 0 & 1 & -1 & 0 & 0 & 0 & 0 & 0 & 0 & 1 & 0 & -1 \\
 0 & -1 & 1 & 0 & 0 & 0 & 0 & 0 & 0 & -1 & 1 & 0 \\
 \end{array}\right)
\]
with all rows and columns ordered by $(1,4, 5, 6)$, $(2, 4, 5,6)$, $(3,4,5,6)$, $(1,2,4, 5)$, $(1,2,4,6)$, $(1,2,5,6)$, $(1,3,4,5)$, $(1,3,4,6)$, $(1,3,5,6)$, $(2,3,4,5)$, $(2,3,4,6)$, $(2,3,5,6)$.

The permutation $\sigma=\left(
  \begin{array}{cccccc}
    1 & 2 & 3 & 4 & 5 & 6 \\
    6 & 5 & 4 & 3 & 2 & 1 \\
     \end{array}
\right)$ then maps $\mathcal{F}(\Lambda_\alpha)$ bijectively to $\mathcal{F}(\Lambda_\beta)$ similarly to the previous case. Then we can easily read out that
\[
b_{\sigma(f)\sigma(g)}^{L}\,=\, -b_{fg}^K,\quad \text{for $f, g\in \mathcal{F}(\Lambda_\alpha)$.}
\]


\section{Bistellar Cluster Algebras}

This section is dedicated to defining a class of algebras from closed oriented triangulated manifolds, which we call \emph{bistellar cluster algebras}.

Given a closed oriented triangulated manifold $K\in \mathbb{TM}_{n}$,  assume that $K$ is a simplicial complex on the vertex set $[m]$, so $K$ is a subcomplex of $2^{[m]}$. We shall mainly be concerned with the case in which $n$ is even and study the algebraic behaviour of middle-dimensional bistellar moves on $K$. We write $n=2h$ and assume that we can perform a bistellar $h$-move on $\alpha\in K$ and set $L\letbe {\bm}_{\alpha} K$.

Now we are going to construct an algebra $\mathcal{A}_{K}$ associated to $K$. Our construction differs from the one of a classical cluster algebra (see~\cite{fz1, fst}), but the basic idea of our construction follows closely.


\subsection{Seeds and Bistellar Seed Mutations}

Let $(\mathbb{P}, \oplus, \cdot)$ be a semifield, that is, $(\mathbb{P}, \oplus)$ is a commutative semigroup and $(\mathbb{P}, \cdot)$ is an abelian group such that multiplication distributes over the auxiliary addition $\oplus$. By $\mathbb{Z}\mathbb{P}$ we denote the group ring of $(\mathbb{P}, \cdot)$ with integer coefficients.

\begin{definition}
Let $K$ be a closed oriented triangulated manifold in $\mathbb{TM}_{n}$. Let $\mathbb{F}\mathcal{X}(K)$ denote the field of rational functions over $\mathbb{ZP}$ with independent variables given by $\mathcal{X}(K)$, where  $\mathcal{X}(K)=\{x_f\mid f\in \mathcal{F}(K)\}$ is called a \emph{cluster}.

Associated to $K$, we get a triple $\Sigma_K=(\mathcal{X}(K), {\bf p}_K, B(K))$, called a  \emph{seed}, where
 \begin{enumerate}
\item $\mathcal{X}(K)$ is the cluster corresponding to $K$,  as defined above;
\item the coefficient tuple ${\bf p}_K=(p^\pm_{K,x})_{x\in \mathcal{X}(K)}$  is a $2d$-tuple of elements of $\mathbb{P}$
satisfying the normalization condition $p_{K,x}^+\oplus p_{K,x}^-=1$ for all $x\in \mathcal{X}(K)$, where $d=f_{n-1}(K)$;
\item $B(K)$ is the exchange matrix of $K$.
\end{enumerate}
 \end{definition}

First let us analyse the behaviour of the  seeds  under the actions of bistellar $h$-moves. Let $(\alpha, \beta)$ be a bistellar pair in $K$ of type $h$. Set $\mathcal{X}(\mathcal{D}_{\alpha})=\{x_f\mid f\in \mathcal{D}_{\alpha}\}$  and $\mathcal{X}(\mathcal{D}_{\beta})=\{x_f\mid f\in \mathcal{D}_{\beta}\}$. Then $\mathcal{X}(\mathcal{D}_{\alpha})$ and $\mathcal{X}(\mathcal{D}_{\beta})$ are subsets of $\mathcal{X}(K)$ and $\mathcal{X}(L)$, respectively. Note that by Corollary~\ref{f-vector1},  we have $f_{n-1}(L)=f_{n-1}(K)=d$, so $|\mathcal{X}(K)|=|\mathcal{X}(L)|$.

\begin{definition}
Elements in $\mathcal{X}(\mathcal{D}_{\alpha})$ and $\mathcal{X}(\mathcal{D}_{\beta})$ are said to be \emph{exchangeable bistellar cluster variables} in $\mathcal{X}(K)$ and $\mathcal{X}(L)$, respectively.
\end{definition}

With this convention,  for the bistellar pair $(\alpha, \beta)$ in $K$, we have the following relations between the  seeds $\Sigma_K=(\mathcal{X}(K), {\bf p}_K, B(K))$ and $\Sigma_{L}=(\mathcal{X}(L), {\bf p}_{L}, B(L))$ as follows:
\begin{enumerate}
\item Firstly, we  see that between the two clusters $\mathcal{X}(K)$ and $\mathcal{X}(L)$ we have the following:
\[
\mathcal{X}(L)\,=\,(\mathcal{X}(K)\setminus \mathcal{X}(\mathcal{D}_\alpha))\cup \mathcal{X}(\mathcal{D}_\beta)
\quad\text{and}\quad
\mathcal{X}(K)\,=\,(\mathcal{X}(L)\setminus \mathcal{X}(\mathcal{D}_{\beta}))\cup \mathcal{X}(\mathcal{D}_{\alpha}),
\]
which exactly corresponds to
\[
L\,=\,{\bm}_\alpha K\,=\,(K\setminus \Lambda_{\alpha})\cup \Lambda_{\beta}
\quad\text{and}\quad
K\,=\, \bm_{\beta} L\,=\,(L\setminus \Lambda_{\beta})\cup \Lambda_{\alpha}
\]
respectively. Since  $\sigma(\mathcal{F}(\Lambda_\alpha))=\mathcal{F}(\Lambda_\beta)$, we can use  $\sigma$ to define a bijection $\varphi^{K}_{\alpha}\colon \mathcal{X}(K)\to \mathcal{X}(L)$ by
    \begin{equation}\label{cc}
    \varphi^{K}_{\alpha}(x_f)\,=\,
    \begin{cases}
    x_{\sigma(f)}, &\text{if $f\in \mathcal{F}(\Lambda_\alpha)$;}\\
    x_{f}, & \text{if $f\notin  \mathcal{F}(\Lambda_\alpha)$,}
    \end{cases}
    \end{equation}
    satisfying $\varphi^{K}_{\alpha}(\mathcal{X}(\mathcal{D}_{\alpha}))=\mathcal{X}(\mathcal{D}_{\beta})$ since $\sigma(\mathcal{D}_{\alpha})=\mathcal{D}_{\beta}$. Conversely, since $\mathcal{F}(K)\setminus \mathcal{F}(\Lambda_\alpha)=\mathcal{F}(L)\setminus \mathcal{F}(\Lambda_\beta)$ and $\sigma$ is an involution, we can also use $\sigma$ to define  a bijection $\varphi^{L}_{\beta}\colon \mathcal{X}(L)\rightarrow\mathcal{X}(K)$ similarly
     satisfying $\varphi^{L}_{\beta}(\mathcal{X}(\mathcal{D}_{\beta}))=\mathcal{X}(\mathcal{D}_{\alpha})$. Clearly, $\varphi^{L}_{\beta}$ is the inverse of $\varphi^{K}_{\alpha}$. So $\varphi^{L}_{\beta}(\varphi^{K}_{\alpha}(\mathcal{X}(K)))\,=\,\mathcal{X}(K)$.

   \item Secondly, $\varphi^{K}_{\alpha}$ and $\varphi^{L}_{\beta}$ determine two bijections
      \[
      c_K\colon {\bf p}_K\longrightarrow {\bf p}_{L}\quad
     \text{given by $p^\pm_{K, x}\longmapsto p^\pm_{L, \varphi^{K}_{\alpha}(x)}$}
     \]
     and
\[
c_{L}\colon {\bf p}_{L}\longrightarrow {\bf p}_{K}\quad
     \text{given by $p^\pm_{L, x}\longmapsto p^\pm_{K, \varphi^{L}_{\beta}(x)}$}.
     \]
Obviously, $c_{L}(c_K({\bf p}_K))={\bf p}_K$.

\item Finally,  by Proposition~\ref{mutation}, the  matrix mutation $\mu_{\alpha}$ satisfies the property that $\mu_{\alpha}(B(K))=B(L)$. We have, for each $b_{fg}^K\in B(K)$,
\begin{equation}\label{mm}
\mu_{\alpha}(b_{fg}^K)\,=\,
\begin{cases}
 -b_{fg}^K=b_{\sigma(f)\sigma(g)}^L, & \text{if $f, g\in \mathcal{F}(\Lambda_{\alpha})$};\\
 \phantom{+}b_{fg}^K=b_{fg}^L, & \text{otherwise}.
\end{cases}
\end{equation}
Since ${\bm}_{\beta}L={\bm}_\beta{\bm}_\alpha K=K$ and $\sigma$ is an involution, we have that $\mu_{\beta}(B(L)=B(K)$, so
\[
\mu_{\beta}(\mu_{\alpha}(B(K)))\,=\,B(K).
\]
\end{enumerate}

We now set up some notation. Suppose that $K\in \mathbb{TM}_n$ with bistellar pair $(\alpha, \beta)$ of type $h$ with $n=2h$. Then for $f\in \mathcal{F}(K)$ we write
\[
u_f^K\,\letbe\, p_{K,x_f}^+/p_{K,x_f}^-.
\]
It then follows from $p^+_{K, x_f}\oplus p^-_{K, x_f}=1$ that
\[
p^+_{K, x_f}\,=\,\frac{u_f^{K}}{1\oplus u_f^{K}}\quad \text{and}\quad p^-_{K, x_f}\,=\,\frac{1}{1\oplus u_f^{K}}.
\]
We also write
\[
\pi_{\alpha, f}^K\,\letbe\,\prod\limits_{g\in \mathcal{D}_{\alpha}}(p^+_{K, x_{g}})^{[b^K_{gf}]_+} (p^-_{K, x_{g}})^{-[-b^K_{gf}]_+},
\]
where $[b]_+\letbe \max\{b,0\}$.

    \begin{definition}\label{seed mutation}
    With the above understanding, we define the
    \emph{bistellar seed mutation}
    \[
    \Phi_{\alpha}\colon \Sigma_K\,=\, (\mathcal{X}(K), {\bf p}_K, B(K))\longrightarrow \Sigma_{L}\,=\,(\mathcal{X}(L), {\bf p}_{L}, B(L))
    \]
    as follows:
     \begin{enumerate}
\item\label{Mon} On $\mathcal{X}(K)$, $\varphi^{K}_{\alpha}(\mathcal{X}(K))=\mathcal{X}(L)=(\mathcal{X}(K)\setminus \mathcal{X}(\mathcal{D}_{\alpha}))\cup \mathcal{X}(\mathcal{D}_{\beta})$ such that there are \emph{exchange relations} between exchangeable bistellar cluster variables of $\mathcal{X}(\mathcal{D}_{\alpha})$ and  $\mathcal{X}(\mathcal{D}_{\beta})$ as follows: for $f\in \mathcal{D}_{\alpha}$,
\begin{multline}\label{e-relation}
x_f x_{\sigma(f)}\,=\, \frac{1}{D^K_{\alpha, f}}
\left(p^+_{K, x_f} \lcm\left( \prod_{g\in \mathcal{F}(\Lambda_{\alpha})\setminus \mathcal{D}_{\alpha}} x_g^{[b_{fg}^K]_+}, \prod_{g\in \mathcal{F}(\Lambda_{\alpha})\setminus \mathcal{D}_{\alpha}} x_{\sigma(g)}^{[b_{fg}^K]_+}\right)\right.\\
+\left.p^-_{K,x_f} \lcm\left( \prod\limits_{g\in \mathcal{F}(\Lambda_{\alpha})\setminus \mathcal{D}_{\alpha}} x_g^{[-b_{fg}^K]_+}, \prod\limits_{g\in \mathcal{F}(\Lambda_{\alpha})\setminus \mathcal{D}_{\alpha}} x_{\sigma(g)}^{[-b_{fg}^K]_+}\right)\right)
\end{multline}
and
 \begin{equation}\label{e:Dpm}
 D^K_{\alpha, f}\,\letbe\, \prod_{\substack{g\in \mathcal{F}(\Lambda_{\alpha})\setminus \mathcal{D}_{\alpha}\\ b_{fg}^K>0,\ b_{f\sigma(g)}^K<0}} x_g^{b_{fg}^K} x_{\sigma(g)}^{b_{fg}^K}\,=\, \prod_{\substack{g\in \mathcal{F}(\Lambda_{\alpha})\setminus \mathcal{D}_{\alpha}\\ b_{fg}^K<0,\ b_{f\sigma(g)}^K>0}} x_g^{-b_{fg}^K} x_{\sigma(g)}^{-b_{fg}^K},
 \end{equation}
 which follows from $\sigma(\mathcal{F}(\Lambda_{\alpha})\setminus \mathcal{D}_{\alpha})=\mathcal{F}(\Lambda_{\alpha})\setminus \mathcal{D}_{\alpha}$ and $\sigma$ being of order~2.

There are $\binom{h+1}{2}$ exchange relations since $|\mathcal{X}(\mathcal{D}_{\alpha})|=|\mathcal{X}(\mathcal{D}_{\beta})|=\binom{h+1}{2}$.

\item\label{seedp} On ${\bf p}_K$,  $c_K({\bf p}_K)={\bf p}_{L}=(p^\pm_{L, x_f})_{f\in \mathcal{F}(L)}$ is uniquely determined by the normalization condition
    $p^+_{L, x_f}\oplus p^-_{L, x_f}=1$
    together with
    \[
    u^L_f\,=\,
    \begin{cases}
    \left(u^K_{\sigma(f)}\right)^{-1},  & \text{ if $f\in \mathcal{D}_{\beta}$;}\\
 \pi_{\alpha, \sigma(f)}^K u^K_{\sigma(f)}, & \text{if $f\in \mathcal{F}(\Lambda_{\beta})\setminus\mathcal{D}_{\beta}$};\\
     u^K_f, & \text{ otherwise.}
    \end{cases}
    \]
\item On $B(K)$, $\mu_{\alpha}(B(K))=B(L)$.
\end{enumerate}
\end{definition}

\begin{remark}
Since ${\bm}_\beta L={\bm}_\beta{\bm}_\alpha K=K$, in a similar way as above we also can define the bistellar seed mutation
\[
\Phi_{\beta}\colon  \Sigma_{L}\,=\,(\mathcal{X}(L), {\bf p}_{L}, B(L))\longrightarrow\Sigma_K\,=\, (\mathcal{X}(K), {\bf p}_K, B(K)).
\]
\end{remark}

From Definition~\ref{seed mutation}\eqref{Mon}, we write
\[
M^+_{K, \alpha, f}\,=\, \frac{p^+_{K, x_f}}{D^K_{\alpha, f}} \lcm\left( \prod_{g\in \mathcal{F}(\Lambda_{\alpha})\setminus \mathcal{D}_{\alpha}} x_g^{[b_{fg}^K]_+}, \prod_{g\in \mathcal{F}(\Lambda_{\alpha})\setminus \mathcal{D}_{\alpha}} x_{\sigma(g)}^{[b_{fg}^K]_+}\right)
\]
and
\[
M^-_{K, \alpha, f}\,=\, \frac{p^-_{K, x_f}}{D^K_{\alpha, f}} \lcm\left( \prod_{g\in \mathcal{F}(\Lambda_{\alpha})\setminus \mathcal{D}_{\alpha}} x_g^{[-b_{fg}^K]_+}, \prod_{g\in \mathcal{F}(\Lambda_{\alpha})\setminus \mathcal{D}_{\alpha}} x_{\sigma(g)}^{[-b_{fg}^K]_+}\right),
\]
so $x_fx_{\sigma(f)}=M^++M^-$.

Then it is easy to see, using equation~\eqref{e:Dpm}, that:

\begin{lemma}\label{pro1}
The two monomials $M^+_{K,\alpha,f}$ and $M^-_{K,\alpha,f}$ have no common divisor.
\end{lemma}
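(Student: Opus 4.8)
The plan is to unwind the definitions of $M^+_{K,\alpha,f}$ and $M^-_{K,\alpha,f}$ and show that, after the division by $D^K_{\alpha,f}$, no variable $x_g$ (or $x_{\sigma(g)}$) survives in both monomials with positive exponent. First I would observe that the coefficients $p^+_{K,x_f}$ and $p^-_{K,x_f}$ live in the semifield $\mathbb{P}$, whose multiplication $\cdot$ is a group operation; these are units, hence never contribute to a common divisor among the independent cluster variables. So the question reduces to the two $\lcm$ factors: I must show that for each $g\in \mathcal{F}(\Lambda_\alpha)\setminus\mathcal{D}_\alpha$, the exponent of $x_g$ in $M^+_{K,\alpha,f}$ and the exponent of $x_g$ in $M^-_{K,\alpha,f}$ cannot both be strictly positive (and likewise for $x_{\sigma(g)}$).

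The key structural input is that $\sigma$ restricted to $\mathcal{F}(\Lambda_\alpha)\setminus\mathcal{D}_\alpha$ is an involution of that set, as recorded right after equation~\eqref{e:Dpm}. This lets me treat the variables $x_g$ and $x_{\sigma(g)}$ symmetrically. Fix $g$. In $M^+$ the exponent of $x_g$ coming from the $\lcm$ is $\max\{[b^K_{fg}]_+,\,[b^K_{f\sigma(g)}]_+\}$ (since $\sigma(\sigma(g))=g$, the variable $x_g$ appears in the second product of the $\lcm$ with exponent $[b^K_{f\sigma(g)}]_+$), while in $M^-$ it is $\max\{[-b^K_{fg}]_+,\,[-b^K_{f\sigma(g)}]_+\}$. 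I then subtract whatever the denominator $D^K_{\alpha,f}$ removes. The whole point of the definition~\eqref{e:Dpm} of $D^K_{\alpha,f}$ is that it collects exactly those $g$ where $b^K_{fg}$ and $b^K_{f\sigma(g)}$ have opposite signs, and cancels the resulting common factor. So the core computation is a short case analysis on the signs of the integer pair $(b^K_{fg},\,b^K_{f\sigma(g)})$: in each of the sign combinations I check that after removing the contribution of $D^K_{\alpha,f}$, the net exponent of $x_g$ is positive in at most one of $M^+,M^-$.

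The main obstacle—and the step I would flag as requiring care—is handling the interaction of the $\lcm$ with the division by $D^K_{\alpha,f}$ when $b^K_{fg}$ and $b^K_{f\sigma(g)}$ have \emph{opposite} signs, say $b^K_{fg}>0$ and $b^K_{f\sigma(g)}<0$. Then $x_g$ gets exponent $b^K_{fg}$ from the $\lcm$ in $M^+$ but zero in $M^-$, whereas $x_{\sigma(g)}$ gets exponent $[-b^K_{fg}]_+=0$ on one side; one must verify that the factor $x_g^{b^K_{fg}}x_{\sigma(g)}^{b^K_{fg}}$ appearing in $D^K_{\alpha,f}$ is genuinely the greatest common part that the two $\lcm$'s share, so that dividing it out leaves the two monomials coprime rather than introducing a negative (i.e. fractional) exponent. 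I would confirm, using the two equal expressions for $D^K_{\alpha,f}$ in~\eqref{e:Dpm}, that the exponent $b^K_{fg}$ removed is exactly $\min$ of the two relevant $\lcm$ exponents, so the subtraction is well-defined and leaves nonnegative exponents on both sides with disjoint support.

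Once the variable-by-variable check is done for every $g\in \mathcal{F}(\Lambda_\alpha)\setminus\mathcal{D}_\alpha$, concluding is immediate: $M^+_{K,\alpha,f}$ and $M^-_{K,\alpha,f}$ are Laurent monomials whose supports of strictly-positive exponents are disjoint, and their $\mathbb{P}$-coefficients are units, so they share no common divisor in the relevant ring. This is precisely the statement of Lemma~\ref{pro1}.
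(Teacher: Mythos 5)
The paper offers no argument for this lemma at all --- it is introduced by ``it is easy to see, using equation~\eqref{e:Dpm}'' --- so your write-up is supplying exactly the verification the authors omit, and your overall strategy is the right (and essentially only) one: the coefficients $p^\pm_{K,x_f}$ are units of $\mathbb{ZP}$, so everything reduces to showing that, after dividing by $D^K_{\alpha,f}$, the two $\lcm$'s have disjoint supports. You also correctly identify that the exponent of $x_g$ in the $\lcm$ of $M^+$ is $\max\{[b^K_{fg}]_+,[b^K_{f\sigma(g)}]_+\}$, and that the only case needing work is when $b^K_{fg}$ and $b^K_{f\sigma(g)}$ have opposite signs (recall the entries of $B(K)$ lie in $\{-1,0,+1\}$, so this is a finite sign check).

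However, your computation in that key case is wrong as stated. If $b^K_{fg}>0$ and $b^K_{f\sigma(g)}<0$, the exponent of $x_g$ in the $\lcm$ of $M^-$ is $\max\{[-b^K_{fg}]_+,[-b^K_{f\sigma(g)}]_+\}=\max\{0,1\}=1$, \emph{not} zero: the second product $\prod_g x_{\sigma(g)}^{[-b^K_{fg}]_+}$, reindexed by the involution, contributes $x_g^{[-b^K_{f\sigma(g)}]_+}$. The same computation shows $x_{\sigma(g)}$ also gets exponent $1$ in both $\lcm$'s. So in the mixed-sign case the factor $x_gx_{\sigma(g)}$ appears \emph{in both} $\lcm$'s with exponent exactly one, and $D^K_{\alpha,f}$ (in which the pair $\{g,\sigma(g)\}$ contributes $x_gx_{\sigma(g)}$ precisely once, via one of the two equal expressions in~\eqref{e:Dpm}) is exactly the greatest common divisor of the two $\lcm$'s. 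Dividing both by it therefore kills $x_g$ and $x_{\sigma(g)}$ in \emph{both} monomials simultaneously, leaving nonnegative exponents with disjoint supports; whereas on your (incorrect) accounting, dividing $M^-$ by $D^K_{\alpha,f}$ would produce a negative exponent. You flagged this as the point requiring care, which is to your credit, but the resolution is not that ``the exponent is zero on one side'' --- it is that $D^K_{\alpha,f}=\gcd(\lcm^+,\lcm^-)$, so both quotients are honest monomials and are coprime. With that correction the proof is complete.
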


\begin{lemma}\label{pro2}
Following from Definition~\ref{seed mutation}\eqref{seedp} we have the following relations:
\[
p^+_{L,x_f}\,=\,
\begin{cases}
p^-_{K,\sigma(f)}, &\text{if $f\in\mathcal{D}_\beta$};\\
\frac{\pi_{\alpha, \sigma(f)}^K u^K_{\sigma(f)}}{1\oplus \pi_{\alpha, \sigma(f)}^K u^K_{\sigma(f)}}, &\text{if $f\in\mathcal{F}(\Lambda_\beta)\setminus \mathcal{D}_\beta$};\\
p^+_{K, x_f}, &\text{otherwise},
\end{cases}
\quad\text{and}\quad
p^-_{L,x_f}\,=\,
\begin{cases}
p^+_{K,\sigma(f)}, &\text{if $f\in\mathcal{D}_\beta$};\\
\frac{1}{1\oplus \pi_{\alpha, \sigma(f)}^K u^K_{\sigma(f)}}, &\text{if $f\in\mathcal{F}(\Lambda_\beta)\setminus \mathcal{D}_\beta$};\\
p^-_{K, x_f}, &\text{otherwise}.
\end{cases}
\]
\end{lemma}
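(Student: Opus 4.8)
The plan is to verify Lemma~\ref{pro2} by direct substitution into the definitions, treating the three cases of Definition~\ref{seed mutation}\eqref{seedp} separately. The key identity to use throughout is the one recorded earlier: from the normalization condition $p^+_{L,x_f}\oplus p^-_{L,x_f}=1$ together with $u^L_f\letbe p^+_{L,x_f}/p^-_{L,x_f}$, one recovers
\[
p^+_{L,x_f}\,=\,\frac{u_f^L}{1\oplus u_f^L}\quad\text{and}\quad p^-_{L,x_f}\,=\,\frac{1}{1\oplus u_f^L}.
\]
So the entire lemma amounts to plugging the three prescribed values of $u^L_f$ into these two formulas and simplifying.

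For the generic case $f\notin \mathcal{F}(\Lambda_\beta)$ one has $u^L_f=u^K_f$, and the formulas immediately give $p^\pm_{L,x_f}=p^\pm_{K,x_f}$, which is the stated ``otherwise'' line. For $f\in \mathcal{F}(\Lambda_\beta)\setminus\mathcal{D}_\beta$ one substitutes $u^L_f=\pi^K_{\alpha,\sigma(f)}u^K_{\sigma(f)}$ directly into the two fractions above, yielding exactly the middle lines with the displayed $1\oplus \pi^K_{\alpha,\sigma(f)}u^K_{\sigma(f)}$ denominators; here essentially no simplification beyond substitution is required. The only case needing a short algebraic manipulation is $f\in\mathcal{D}_\beta$, where $u^L_f=(u^K_{\sigma(f)})^{-1}$. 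Then
\[
p^+_{L,x_f}\,=\,\frac{(u^K_{\sigma(f)})^{-1}}{1\oplus (u^K_{\sigma(f)})^{-1}}\,=\,\frac{1}{1\oplus u^K_{\sigma(f)}}\,=\,p^-_{K,\sigma(f)},
\]
using that multiplication in the semifield distributes over $\oplus$ so that multiplying numerator and denominator by $u^K_{\sigma(f)}$ is legitimate; symmetrically $p^-_{L,x_f}=u^K_{\sigma(f)}/(1\oplus u^K_{\sigma(f)})=p^+_{K,\sigma(f)}$. This swap of $+$ and $-$ superscripts is precisely the content of the first line, and it reflects the fact that inverting $u$ interchanges the two coefficients.

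The main point to be careful about — and the only place where a genuine structural fact intervenes — is the manipulation in the $\mathcal{D}_\beta$ case: one must justify clearing the reciprocal using the semifield distributivity of $\cdot$ over $\oplus$, rather than treating $\oplus$ as ordinary field addition (where such cancellation would be invalid). Everything else is routine substitution. I would therefore organize the proof as: (i) recall the $p^\pm$-versus-$u$ formulas for $L$; (ii) dispose of the generic and $\mathcal{F}(\Lambda_\beta)\setminus\mathcal{D}_\beta$ cases by plain substitution; (iii) carry out the short reciprocal computation for $\mathcal{D}_\beta$, invoking the semifield axiom to clear denominators. No serious obstacle is expected; the lemma is essentially bookkeeping once the $u$-to-$p^\pm$ dictionary is invoked.
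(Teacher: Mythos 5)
Your proposal is correct and follows essentially the same route as the paper: both arguments reduce the lemma to substituting the three prescribed values of $u^L_f$ into the normalization formulas $p^+=\frac{u}{1\oplus u}$, $p^-=\frac{1}{1\oplus u}$, with the only nontrivial step being the $\mathcal{D}_\beta$ case, which you handle by clearing the reciprocal via distributivity of $\cdot$ over $\oplus$ while the paper equivalently reads off the swap from $p^+_{L,x_f}/p^-_{L,x_f}=p^-_{K,x_{\sigma(f)}}/p^+_{K,x_{\sigma(f)}}$ and uniqueness of the normalized pair. No gap.
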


\begin{proof}
If $f\in \mathcal{D}_{\beta}$, since $p^+_{L, x_f}/ p^-_{L, x_f}=p^-_{K, x_{\sigma(f)}}/p^+_{K, x_{\sigma(f)}}$ and $p^+_{K, x_{\sigma(f)}}\oplus p^-_{K, x_{\sigma(f)}}=1$, then $p^+_{K, x_{\sigma(f)}}= \frac{1}{1\oplus u_f^{L}}$ and $p^-_{K, x_{\sigma(f)}}=\frac{u_f^{L}}{1\oplus u_f^{L}}$, so $p^\pm_{L, x_f}=p^\mp_{K, x_{\sigma(f)}}$ as required.

If $f\in \mathcal{F}(L)\setminus \mathcal{F}(\Lambda_\beta)$, since $p^+_{L, x_f}/ p^-_{L, x_f}=p^+_{K, x_{\sigma(f)}}/p^-_{K, x_{\sigma(f)}}$, then clearly $p^\pm_{L, x_f}=p^\pm_{K, x_{f}}$.

If $f\in \mathcal{F}(\Lambda_{\beta})\setminus\mathcal{D}_{\beta}$, by~\eqref{mm}, by direct calculation
\[
p^+_{L, x_f}\,=\, \frac{u^L_f}{1\oplus u^L_f}\,=\, \frac{\pi^K_{\alpha, \sigma(f)}u^K_{\sigma(f)}}{1\oplus \pi^K_{\alpha, \sigma(f)}u^K_{\sigma(f)}}
\quad\text{and}\quad
p^-_{L, x_f}\,=\, \frac{1}{1\oplus u^L_f}\,=\, \frac{1}{1\oplus \pi^K_{\alpha, \sigma(f)}u^K_{\sigma(f)}}.
\]
\end{proof}

\begin{remark}
We also have that
\begin{align*}
\pi_{\alpha, \sigma(f)}^K\,&=\,\prod\limits_{g\in \mathcal{D}_{\alpha}}(p^+_{K, x_{g}})^{[b^K_{g\sigma(f)}]_+} (p^-_{K, x_{g}})^{-[-b^K_{g\sigma(f)}]_+}\,=\, \prod\limits_{g\in \mathcal{D}_{\beta}}(p^+_{K, x_{\sigma(g)}})^{[b^K_{\sigma(g)\sigma(f)}]_+} (p^-_{K, x_{\sigma(g)}})^{-[-b^K_{\sigma(g)\sigma(f)}]_+}\\
&=\, \prod\limits_{g\in \mathcal{D}_{\beta}}(p^-_{L, x_g})^{[-b^{L}_{gf}]_+} (p^+_{L, x_{g}})^{-[b^{L}_{gf}]_+}\,=\, \left(\prod\limits_{g\in \mathcal{D}_{\beta}}(p^+_{L, x_{g}})^{[b^{L}_{gf}]_+}(p^-_{L, x_g})^{-[-b^{L}_{gf}]_+}\right)^{-1}\,=\, \left(\pi^L_{\beta, f}\right)^{-1}.
\end{align*}
\end{remark}

\begin{lemma} \label{pro3}
For $f\in \mathcal{D}_{\alpha}$, $M^\pm_{K,\alpha,f}=M^\mp_{L,\beta,\sigma(f)}$. Furthermore,
\[
M^+_{L,\beta,\sigma(f)}+ M^-_{L,\beta,\sigma(f)}\,=\,M^-_{K,\alpha,f}+M^+_{K,\alpha,f}\,=\,x_fx_{\sigma(f)}.
\]
\end{lemma}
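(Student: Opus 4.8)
The plan is to prove the two monomial identities $M^+_{K,\alpha,f}=M^-_{L,\beta,\sigma(f)}$ and $M^-_{K,\alpha,f}=M^+_{L,\beta,\sigma(f)}$ directly from the definitions, factor by factor (the scalar coefficient, the $\lcm$ factor, and the denominator $D$), and then to deduce the ``furthermore'' statement by adding these two identities and invoking the exchange relation~\eqref{e-relation} on both sides. Throughout I would write $E\letbe \mathcal{F}(\Lambda_\alpha)\setminus\mathcal{D}_\alpha=\mathcal{F}(\Lambda_\beta)\setminus\mathcal{D}_\beta$ for the common index set~\eqref{compl} appearing in all four monomials, and use repeatedly that $\sigma$ is an involution fixing $E$ setwise and carrying $\mathcal{D}_\alpha$ onto $\mathcal{D}_\beta$.

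First I would record the three input relations. (i) Since $f\in\mathcal{D}_\alpha$ forces $\sigma(f)\in\mathcal{D}_\beta$, Lemma~\ref{pro2} applied at $\sigma(f)$ together with $\sigma^2=\id$ yields $p^+_{L,x_{\sigma(f)}}=p^-_{K,x_f}$ and $p^-_{L,x_{\sigma(f)}}=p^+_{K,x_f}$, so the scalar coefficients of $M^\pm_{K,\alpha,f}$ and $M^\mp_{L,\beta,\sigma(f)}$ match. (ii) For $g\in E$ I would deduce from~\eqref{mm} the sign relations $b^L_{\sigma(f)g}=-b^K_{f\sigma(g)}$ and $b^L_{\sigma(f)\sigma(g)}=-b^K_{fg}$: writing $g=\sigma(\sigma(g))$ and applying the first line of~\eqref{mm} (valid because $f,\sigma(g)\in\mathcal{F}(\Lambda_\alpha)$) gives exactly these equalities. (iii) I will use that every entry of $B(K)$ lies in $\{-1,0,+1\}$.

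The heart of the computation is the $\lcm$ factor. Starting from $M^-_{L,\beta,\sigma(f)}$ and substituting $[-b^L_{\sigma(f)g}]_+=[b^K_{f\sigma(g)}]_+$ from (ii), I would reindex each of the two products over $E$ by the involution $g\mapsto\sigma(g)$; this turns $\prod_{g\in E}x_g^{[b^K_{f\sigma(g)}]_+}$ into $\prod_{g\in E}x_{\sigma(g)}^{[b^K_{fg}]_+}$ and the companion product into $\prod_{g\in E}x_g^{[b^K_{fg}]_+}$. Since $\lcm$ is symmetric in its two arguments, the resulting $\lcm$ is precisely the one in $M^+_{K,\alpha,f}$. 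For the denominators I would plug (ii) into the defining product for $D^L_{\beta,\sigma(f)}$: the selection condition $b^L_{\sigma(f)g}>0,\ b^L_{\sigma(f)\sigma(g)}<0$ becomes $b^K_{f\sigma(g)}<0,\ b^K_{fg}>0$, which is the condition in the first expression for $D^K_{\alpha,f}$ in~\eqref{e:Dpm}; and by (iii) the exponent $-b^K_{f\sigma(g)}$ equals $b^K_{fg}$ since, under this condition, both are $1$. Hence $D^L_{\beta,\sigma(f)}=D^K_{\alpha,f}$. Combining the three factors gives $M^+_{K,\alpha,f}=M^-_{L,\beta,\sigma(f)}$, and the identical argument with $+$ and $-$ interchanged gives $M^-_{K,\alpha,f}=M^+_{L,\beta,\sigma(f)}$.

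Finally, adding the two identities gives $M^+_{L,\beta,\sigma(f)}+M^-_{L,\beta,\sigma(f)}=M^+_{K,\alpha,f}+M^-_{K,\alpha,f}$, and~\eqref{e-relation} (read for $\alpha$ at $f$, and for $\beta$ at $\sigma(f)$, using $\sigma(\sigma(f))=f$) identifies each side with $x_fx_{\sigma(f)}$, which is the displayed equality. The step I expect to require the most care is the (iii)-assisted matching of the denominators: after substitution the $\beta$-version of $D$ carries exponents $-b^K_{f\sigma(g)}$ rather than $b^K_{fg}$, and it is only the restriction of the exchange-matrix entries to $\{-1,0,+1\}$ that collapses these to the same monomial. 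Keeping the reindexing $g\mapsto\sigma(g)$ consistent between the $\lcm$ and the denominator, and verifying that the selection conditions transform correctly under it, is where an error would most easily creep in.
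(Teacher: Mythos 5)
Your proposal is correct and follows essentially the same route as the paper's proof: both match the coefficient via Lemma~\ref{pro2}, transform the exchange-matrix entries via~\eqref{mm}, reindex the products over $\mathcal{F}(\Lambda_\alpha)\setminus\mathcal{D}_\alpha$ by the involution $\sigma$ and use the symmetry of $\lcm$, and then identify the denominators using~\eqref{e:Dpm}. The only cosmetic difference is that you match $D^L_{\beta,\sigma(f)}$ against the first expression in~\eqref{e:Dpm} by explicitly invoking that the entries of $B(K)$ lie in $\{-1,0,+1\}$, whereas the paper reindexes and lands on the second expression; these are equivalent, since the equality of the two expressions in~\eqref{e:Dpm} rests on the same observation.
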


\begin{proof}
For $f, g\in \mathcal{F}(\Lambda_{\alpha})$,  we have by~\eqref{mm} that $b_{\sigma(f)\sigma(g)}^{L}=-b^K_{fg}$. For $f\in \mathcal{D}_{\alpha}$,
by Lemma~\ref{pro2} we have that $p^\pm_{L,  x_{\sigma(f)} }=p^\mp_{K, x_f}$.
We note that $\sigma$ is an involution,  $\sigma(\mathcal{F}(\Lambda_{\alpha})\setminus\mathcal{D}_{\alpha})=\mathcal{F}(\Lambda_{\alpha})\setminus\mathcal{D}_{\alpha}=\mathcal{F}(\Lambda_{\beta})\setminus\mathcal{D}_{\beta}$ and $\sigma(\mathcal{D}_{\alpha})=\mathcal{D}_{\beta}$.
Thus, for  $f\in \mathcal{D}_{\alpha}$,
\begin{align*}
 M^\mp_{L,\beta,\sigma(f)} \,=\,&\  \frac{p^\mp_{L,x_{\sigma(f)}}}{D^{L}_{\beta,\sigma(f)}} \lcm \left(\prod\limits_{g\in \mathcal{F}(\Lambda_{\beta})\setminus\mathcal{D}_{\beta}}x_g^{[\mp b^{L}_{\sigma(f)g}]_+},
\prod\limits_{g\in \mathcal{F}(\Lambda_{\beta})\setminus\mathcal{D}_{\beta}}x_{\sigma(g)}^{[\mp b^{L}_{\sigma(f)g}]_+}\right)\\
\,=\,& \frac{p^\pm_{K, x_f}}{D^{L}_{\beta,\sigma(f)}} \lcm \left(\prod\limits_{g\in \mathcal{F}(\Lambda_{\alpha})\setminus\mathcal{D}_{\alpha}}x_{\sigma(g)}^{[\mp b^{L}_{\sigma(f)\sigma(g)}]_+},
\prod\limits_{g\in \mathcal{F}(\Lambda_{\alpha})\setminus\mathcal{D}_{\alpha}}x_{\sigma^2(g)}^{[\mp b^{L}_{\sigma(f)\sigma(g)}]_+}\right)\\
\,=\,& \frac{p^\pm_{K, x_f}}{D^{L}_{\beta,\sigma(f)}}\lcm \left(\prod\limits_{g\in \mathcal{F}(\Lambda_{\alpha})\setminus\mathcal{D}_{\alpha}}x_{g}^{[\pm b^K_{fg}]_+},
\prod\limits_{g\in \mathcal{F}(\Lambda_{\alpha})\setminus\mathcal{D}_{\alpha}}x_{\sigma(g)}^{[\pm b^K_{fg}]_+}\right)
\end{align*}
where
\[
 D^L_{\beta, \sigma(f)}\,=\, \prod_{\substack{g\in \mathcal{F}(\Lambda_{\beta})\setminus \mathcal{D}_{\beta}\\ b_{\sigma(f)g}^L>0,\ b_{\sigma(f)\sigma(g)}^L<0}} x_g^{b_{\sigma(f)g}^L} x_{\sigma(g)}^{b_{\sigma(f)g}^L}
 \,=\, \prod_{\substack{g\in \mathcal{F}(\Lambda_{\alpha})\setminus \mathcal{D}_{\alpha}\\ -b_{fg}^K>0,\ -b_{f\sigma(g)}^K<0}} x_{\sigma(g)}^{-b_{fg}^K} x_{g}^{-b_{fg}^K}
 \,=\, D^K_{\alpha,f}
\]
using equation~\eqref{e:Dpm}. Thus, we have that  $M^\pm_{K,\alpha,f}=M^\mp_{L,\beta,\sigma(f)}$.
\end{proof}

The above arguments imply that:
\begin{proposition}\label{b-seed-m}
The bistellar seed mutation $\Phi_{\alpha}\colon \Sigma_K\longrightarrow \Sigma_{L}$ is invertible, and its inverse is $\Phi_{\beta}$.
\end{proposition}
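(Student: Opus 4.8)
The plan is to show that the composite $\Phi_{\beta}\circ\Phi_{\alpha}$ is the identity on $\Sigma_K$ (and, by the symmetry obtained from interchanging the roles of $(\alpha,K)$ and $(\beta,L)$, that $\Phi_{\alpha}\circ\Phi_{\beta}=\id_{\Sigma_{L}}$ as well). Since $\bm_{\beta}\bm_{\alpha}K=K$ by Definition~\ref{def-bistellar}, the map $\Phi_{\beta}\colon\Sigma_{L}\to\Sigma_K$ of the Remark after Definition~\ref{seed mutation} genuinely lands on the original seed, so $\Phi_{\beta}\circ\Phi_{\alpha}$ is an endomorphism of $\Sigma_K$. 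A seed is a triple, so it suffices to verify that the composite restricts to the identity on each of its three constituents separately: the exchange matrix, the cluster together with its exchange relations, and the coefficient tuple.

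For the exchange matrix the claim is immediate from the Remark following Proposition~\ref{mutation}: because $\sigma$ is an involution one has $\mu_{\beta}(\mu_{\alpha}(B(K)))=\mu_{\beta}(B(L))=B(K)$. For the cluster, the bijection $\varphi^{L}_{\beta}$ was already observed to be the two-sided inverse of $\varphi^{K}_{\alpha}$, so $\varphi^{L}_{\beta}\circ\varphi^{K}_{\alpha}=\id_{\mathcal{X}(K)}$; this again uses only that $\sigma$ is an involution and that $\mathcal{F}(K)\setminus\mathcal{F}(\Lambda_\alpha)=\mathcal{F}(L)\setminus\mathcal{F}(\Lambda_\beta)$. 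The delicate point is that the exchange relations imposed by $\Phi_{\alpha}$ and then by $\Phi_{\beta}$ must coincide, rather than introducing a new constraint. This is precisely Lemma~\ref{pro3}: for each $f\in\mathcal{D}_{\alpha}$ we have $M^{\pm}_{K,\alpha,f}=M^{\mp}_{L,\beta,\sigma(f)}$, so the relation that $\Phi_{\beta}$ imposes on the exchangeable variable $x_{\sigma(f)}$ reads
\[
x_{\sigma(f)}x_f\,=\,M^{+}_{L,\beta,\sigma(f)}+M^{-}_{L,\beta,\sigma(f)}\,=\,M^{-}_{K,\alpha,f}+M^{+}_{K,\alpha,f}\,=\,x_fx_{\sigma(f)},
\]
which is exactly the relation originally produced by $\Phi_{\alpha}$. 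Hence the two families of exchange relations agree and the composite fixes the cluster.

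For the coefficient tuple I would check $c_{L}(c_{K}({\bf p}_K))={\bf p}_K$, which by the normalization condition reduces to recovering $u^{K}_f$ after applying the rule of Definition~\ref{seed mutation}\eqref{seedp} twice. I would handle the three cases of that rule in turn. When $f\notin\mathcal{F}(\Lambda_\alpha)$ nothing changes. When $f\in\mathcal{D}_{\alpha}$ one gets $u^{L}_{\sigma(f)}=(u^{K}_f)^{-1}$, and $\Phi_{\beta}$ then returns $u^{K}_f=(u^{L}_{\sigma(f)})^{-1}$, so the value is restored. The essential case is $f\in\mathcal{F}(\Lambda_\alpha)\setminus\mathcal{D}_{\alpha}$, where $u^{L}_{\sigma(f)}=\pi^{K}_{\alpha,f}\,u^{K}_f$ and $\Phi_{\beta}$ produces $\pi^{L}_{\beta,\sigma(f)}\,u^{L}_{\sigma(f)}=\pi^{L}_{\beta,\sigma(f)}\,\pi^{K}_{\alpha,f}\,u^{K}_f$; this equals $u^{K}_f$ exactly because $\pi^{K}_{\alpha,f}=(\pi^{L}_{\beta,\sigma(f)})^{-1}$, which is the Remark following Lemma~\ref{pro2} applied with $f$ replaced by $\sigma(f)$. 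The main obstacle is this middle coefficient case: keeping the $\sigma$-index shifts straight and confirming that the auxiliary factors $\pi^{K}_{\alpha,f}$ and $\pi^{L}_{\beta,\sigma(f)}$ cancel is the one genuinely nontrivial cancellation, and it is precisely the identity isolated in that Remark. Everything else is the formal involutivity of $\sigma$ together with the already-established Lemmas~\ref{pro2} and~\ref{pro3}, so once these pieces are assembled the invertibility of $\Phi_{\alpha}$ with inverse $\Phi_{\beta}$ follows.
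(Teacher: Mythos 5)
Your proposal is correct and follows essentially the same route as the paper: the paper's proof is simply the phrase ``the above arguments imply that,'' referring to the involutivity of $\sigma$ (hence $\mu_\beta\circ\mu_\alpha=\id$ and $\varphi^L_\beta\circ\varphi^K_\alpha=\id$), Lemma~\ref{pro3} for the coincidence of the exchange relations, and the Remark after Lemma~\ref{pro2} giving $\pi^K_{\alpha,\sigma(f)}=(\pi^L_{\beta,f})^{-1}$ for the coefficient tuple. You have merely made that implicit componentwise verification explicit, with the index shifts handled correctly.
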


The seed mutation $\Phi_{\alpha}$ also determines a natural
$\mathbb{ZP}$-linear field isomorphism $\mathcal{L}_{(K,L)}\colon \mathbb{F}\mathcal{X}(K)\longrightarrow\mathbb{F}\mathcal{X}(L)$ given by
\begin{equation}\label{field-iso}
\mathcal{L}_{(K,L)}(x_f)\,=\,\varphi^K_{\alpha}(x_f)\,=\, \begin{cases}
    x_f & \text{if $f\in \mathcal{F}(K)\setminus \mathcal{F}(\Lambda_\alpha)$;}\\
    x_{\sigma(f)} & \text{if $f\in  \mathcal{F}(\Lambda_\alpha)\setminus\mathcal{D}_{\alpha}$;}\\
    x_{\sigma(f)}=\frac{M^+_{K,\alpha,f}+M^-_{K,\alpha,f}}{x_f} & \text{if $f\in \mathcal{D}_{\alpha}$.}
    \end{cases}
    \end{equation}
Clearly, $\mathcal{L}_{(K, L)}$ is invertible and $\mathcal{L}_{(L, K)}=\mathcal{L}_{(K, L)}^{-1}$.
Since $\mathcal{X}(K)\setminus \mathcal{X}(\mathcal{D}_{\alpha})=\mathcal{X}(L)\setminus \mathcal{X}(\mathcal{D}_{\beta})$ and
\[
\frac{M^+_{K,\alpha,f}+M^-_{K,\alpha,f}}{x_f}\,\in\, \mathbb{F}\mathcal{X}(L)
\]
for $f\in \mathcal{D}_{\alpha}$, this gives:
\begin{corollary}\label{field-1}
Under the action of the seed mutation $\Phi_{\alpha}$ (or $\Phi_{\beta}$),   $\mathbb{F}\mathcal{X}(K)$ can be identified with
$\mathbb{F}\mathcal{X}(L)$.
\end{corollary}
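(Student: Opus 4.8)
The statement packages the facts established in the discussion preceding it, so the plan is to verify that the map $\mathcal{L}_{(K,L)}$ of~\eqref{field-iso} is a well-defined $\mathbb{ZP}$-linear field isomorphism and to read the identification off from it. First I would note that $\mathcal{L}_{(K,L)}$ is induced by the cluster bijection $\varphi^K_\alpha\colon\mathcal{X}(K)\to\mathcal{X}(L)$ of~\eqref{cc}, and confirm that the image of each generator lands in $\mathbb{F}\mathcal{X}(L)$. Running through the three cases of~\eqref{field-iso}: if $f\in\mathcal{F}(K)\setminus\mathcal{F}(\Lambda_\alpha)$ then $x_f\in\mathcal{X}(L)$ because $\mathcal{F}(K)\setminus\mathcal{F}(\Lambda_\alpha)=\mathcal{F}(L)\setminus\mathcal{F}(\Lambda_\beta)$; if $f\in\mathcal{F}(\Lambda_\alpha)\setminus\mathcal{D}_\alpha$ then $\sigma(f)$ again lies in $\mathcal{F}(\Lambda_\alpha)\setminus\mathcal{D}_\alpha=\mathcal{F}(\Lambda_\beta)\setminus\mathcal{D}_\beta$, so $x_{\sigma(f)}\in\mathcal{X}(L)$; and if $f\in\mathcal{D}_\alpha$ then $\sigma(f)\in\mathcal{D}_\beta$, so $x_{\sigma(f)}\in\mathcal{X}(\mathcal{D}_\beta)\subseteq\mathcal{X}(L)$.

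Because $\sigma$ is an involution that interchanges $\mathcal{D}_\alpha$ and $\mathcal{D}_\beta$, preserves $\mathcal{F}(\Lambda_\alpha)\setminus\mathcal{D}_\alpha$, and fixes the faces outside $\Lambda_\alpha$, the map $\varphi^K_\alpha$ is a bijection of the cluster $\mathcal{X}(K)$ onto the cluster $\mathcal{X}(L)$. Since $\mathbb{F}\mathcal{X}(K)$ and $\mathbb{F}\mathcal{X}(L)$ are the fields of rational functions over $\mathbb{ZP}$ on these two clusters, a bijection of their free generating sets extends uniquely to a $\mathbb{ZP}$-linear field isomorphism, which is exactly $\mathcal{L}_{(K,L)}$. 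Its inverse is obtained symmetrically: the same recipe applied to the bistellar pair $(\beta,\alpha)$ in $L$, legitimate because $\bm_\beta L=K$, produces $\mathcal{L}_{(L,K)}$ induced by $\varphi^L_\beta=(\varphi^K_\alpha)^{-1}$, and $\sigma^2=\id$ gives $\mathcal{L}_{(L,K)}\circ\mathcal{L}_{(K,L)}=\id$ and $\mathcal{L}_{(K,L)}\circ\mathcal{L}_{(L,K)}=\id$ on generators, hence on the whole fields.

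It then remains to check that this identification is compatible with the seed mutation, that is, with the exchange relation~\eqref{e-relation}. For $f\in\mathcal{D}_\alpha$ the identification sends $x_f$ to $x_{\sigma(f)}\in\mathcal{X}(\mathcal{D}_\beta)$, and here I would invoke Lemma~\ref{pro3}, which gives $M^+_{K,\alpha,f}+M^-_{K,\alpha,f}=x_fx_{\sigma(f)}=M^+_{L,\beta,\sigma(f)}+M^-_{L,\beta,\sigma(f)}$; dividing by $x_f$, respectively by $x_{\sigma(f)}$, yields the exchange-relation form $x_{\sigma(f)}=(M^+_{K,\alpha,f}+M^-_{K,\alpha,f})/x_f$ recorded in~\eqref{field-iso} and simultaneously shows that $\mathcal{L}_{(L,K)}$ sends $x_{\sigma(f)}$ back to $x_f$ through its own exchange relation. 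Thus $\mathcal{L}_{(K,L)}$ identifies $\mathbb{F}\mathcal{X}(K)$ with $\mathbb{F}\mathcal{X}(L)$, restricting to a permutation of the shared variables $\mathcal{X}(K)\setminus\mathcal{X}(\mathcal{D}_\alpha)=\mathcal{X}(L)\setminus\mathcal{X}(\mathcal{D}_\beta)$ and matching the exchangeable variables with one another. The purely field-theoretic part is formal, so I expect the only substantive point to be this last compatibility on $\mathcal{D}_\alpha$ and $\mathcal{D}_\beta$, where the coincidence of the two pairs of exchange monomials is precisely the content of Lemma~\ref{pro3}.
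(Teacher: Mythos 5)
Your argument is correct and takes essentially the same route as the paper, whose proof of this corollary is just the discussion immediately preceding it: the field isomorphism $\mathcal{L}_{(K,L)}$ of~\eqref{field-iso}, the equality $\mathcal{X}(K)\setminus \mathcal{X}(\mathcal{D}_{\alpha})=\mathcal{X}(L)\setminus \mathcal{X}(\mathcal{D}_{\beta})$, and the fact that $(M^+_{K,\alpha,f}+M^-_{K,\alpha,f})/x_f$ lies in $\mathbb{F}\mathcal{X}(L)$ for $f\in\mathcal{D}_{\alpha}$. You simply make explicit the well-definedness checks and the role of Lemma~\ref{pro3} that the paper leaves implicit.
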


We can now actually define the required bistellar cluster algebra $\mathcal{A}_K$ from the initial seed $\Sigma_K$ via bistellar seed mutations by performing all possible bistellar $h$-moves on $K$, as in the definition of a classical cluster algebra. To understand the structure of  $\mathcal{A}_K$ more clearly, we would like to give more details in the next subsection.


\subsection{Bistellar Exchange Graphs and Bistellar Cluster Algebras}

We define an equivalence relation on $\mathbb{TM}_{2h}$ by saying that $K_1\sim K_2$ if and only if $K_2$ can be obtained from $K_1$ by a finite sequence of bistellar $h$-moves (including the empty sequence). This is clearly an equivalence relation and we denote by $[K]$ the equivalence class containing $K$.

Note that it is possible that $[K]$ only contains $K$ itself. For example, if $K$ is the boundary complex of an $(2h+1)$-simplex, then we cannot perform any bistellar $h$-moves on $K$, so $[K]=\{K\}$. If $|[K]|>1$, then by Corollary~\ref{f-vector1}, any triangulated manifold $L\in [K]$ is also a subcomplex of of $2^{[m]}$, so $[K]$ is finite. Throughout the following, we assume that $|[K]|>1$.

\begin{definition}
A finite sequence of pairs $(\alpha_1, \beta_1),\dots, (\alpha_l, \beta_l)$ in $\mathcal{F}_h^{2^{[m]}}$ is called a \emph{bistellar sequence} of $[K]$ if there exists $L', L''\in [K]$ such that $L''={{\bm}_{\alpha_{l}}\cdots{\bm}_{\alpha_1} L'}$.

A pair $(\alpha, \beta)$ in $\mathcal{F}_h^{2^{[m]}}$ is said to be a \emph{bistellar pair} of $[K]$ if  there is a bistellar sequence of $[K]$, $(\alpha_1, \beta_1),\dots, (\alpha_l, \beta_l)$, such that $(\alpha, \beta)$ is one of $(\alpha_1, \beta_1),\dots, (\alpha_l, \beta_l)$.
\end{definition}

Obviously, if $(\alpha_1, \beta_1),\dots, (\alpha_l, \beta_l)$ is a bistellar sequence of $[K]$, then so is its reverse.

By $\mathcal{S}_{bp}^{[K]}$ we denote the set  of all bistellar pairs of $[K]$, and for each $L\in [K]$, by $\mathcal{S}_{bp}^L$ we denote the subset of $\mathcal{S}_{bp}^{[K]}$ consisting of bistellar pairs in $L$. Obviously, each $\mathcal{S}_{bp}^L$ is nonempty. On the other hand, given a pair $(\alpha, \beta)\in \mathcal{S}_{bp}^{[K]}$, it is easy to see that there must be a triangulated manifold $L\in [K]$ such that $(\alpha, \beta)$ is a bistellar pair in $L$, that is, $(\alpha, \beta)\in \mathcal{S}_{bp}^L$. Thus,
\[
\mathcal{S}_{bp}^{[K]}\,=\,\bigcup_{L\in [K]}\mathcal{S}_{bp}^L.
\]
Note that generally, for two $L, L'\in [K]$, the intersection $\mathcal{S}_{bp}^L\cap \mathcal{S}_{bp}^{L'}$ may not be empty.

\begin{lemma}\label{bpset}
The set of bistellar pairs $\mathcal{S}_{bp}^{[K]}$ has the following properties:
\begin{enumerate}
\item $\mathcal{S}_{bp}^{[K]}$ is a finite set.
\item $\mathcal{S}_{bp}^{[K]}$ is nonempty if and only if $|[K]|>1$.
\item  $(\alpha, \beta)\in \mathcal{S}_{bp}^{[K]}$ if and only if $(\beta, \alpha)\in \mathcal{S}_{bp}^{[K]}$.
\item Let $L\in [K]$ and $(\alpha, \beta)\in \mathcal{S}_{bp}^{[K]}$. Then $(\alpha, \beta)\in \mathcal{S}_{bp}^L$ if and only if $(\beta, \alpha)\in \mathcal{S}_{bp}^{{\bm}_\alpha L}$.
  \item If $(\alpha, \beta), (\alpha', \beta')\in \mathcal{S}_{bp}^{L}$ are distinct pairs,  then
  $\mathcal{D}_{\alpha}\cap \mathcal{D}_{\alpha'}=\emptyset$.
 \end{enumerate}
 \end{lemma}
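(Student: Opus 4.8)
The plan is to verify each of the five assertions in Lemma~\ref{bpset} separately, since they concern quite different structural features of $\mathcal{S}_{bp}^{[K]}$. I will take them in the order given, as the later ones build on the combinatorial picture set up by the earlier ones.

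For part (1), I would argue that since $|[K]|>1$, Corollary~\ref{f-vector1} guarantees every $L\in[K]$ is a subcomplex of the fixed complex $2^{[m]}$, so $[K]$ is finite. Each bistellar pair $(\alpha,\beta)$ of $[K]$ has both $\alpha,\beta\in\mathcal{F}_h^{2^{[m]}}$, a finite set; hence $\mathcal{S}_{bp}^{[K]}\subseteq\mathcal{F}_h^{2^{[m]}}\times\mathcal{F}_h^{2^{[m]}}$ is finite. Part (2) is essentially a restatement: if $|[K]|>1$ then some nonempty bistellar sequence exists producing a distinct manifold, so $\mathcal{S}_{bp}^{[K]}\ne\emptyset$; conversely if $\mathcal{S}_{bp}^{[K]}\ne\emptyset$ there is a genuine move and $|[K]|>1$. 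Part (3) follows from the reversibility already noted in the text: the reverse of a bistellar sequence is again one (this is stated just above the lemma), and by Definition~\ref{def-bistellar} the inverse of ${\bm}_\alpha$ is ${\bm}_\beta$ with pair $(\beta,\alpha)$, so $(\alpha,\beta)$ appears in a sequence iff $(\beta,\alpha)$ appears in its reverse.

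For part (4) I would unwind the definitions directly. If $(\alpha,\beta)\in\mathcal{S}_{bp}^L$ then $\Link_L\alpha=\partial\beta$ with $\beta\notin L$, and after performing ${\bm}_\alpha$ we obtain ${\bm}_\alpha L$ in which, as observed right after Definition~\ref{def-bistellar}, $(\beta,\alpha)$ is a bistellar pair of type $n-h=h$, i.e.\ $\Link_{{\bm}_\alpha L}\beta=\partial\alpha$ with $\alpha\notin{\bm}_\alpha L$. Since ${\bm}_\alpha L\in[K]$, this shows $(\beta,\alpha)\in\mathcal{S}_{bp}^{{\bm}_\alpha L}$; the converse is symmetric using ${\bm}_\beta{\bm}_\alpha L=L$.

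The main obstacle is part (5), the disjointness $\mathcal{D}_\alpha\cap\mathcal{D}_{\alpha'}=\emptyset$ for distinct pairs in the same $L$. Here I would reason geometrically rather than by formula. Recall from the discussion preceding Corollary~\ref{com-equ} that $\mathcal{D}_\alpha=\mathcal{F}(\Lambda_\alpha)\setminus\mathcal{F}(\Lambda_\beta)$ consists precisely of those $(n-1)$-simplices of $\Lambda_\alpha$ that are \emph{removed} by the move ${\bm}_\alpha$; equivalently, each such face lies in the interior of $\Lambda_\alpha=\alpha\ast\partial\beta$ and is a common face of two of the top simplices $F_0,\dots,F_h$. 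A codimension-one simplex $f\in\mathcal{D}_\alpha$ is shared by exactly two $n$-simplices of $L$, both of which lie in ${\bf F}_\alpha$. If $f$ also lay in $\mathcal{D}_{\alpha'}$, then those same two $n$-simplices would lie in ${\bf F}_{\alpha'}$, forcing the two moves to share top-dimensional simplices and hence the same supporting region; I would then argue this compels $\alpha=\alpha'$ (and $\beta=\beta'$), contradicting distinctness. Concretely, since $f\in\mathcal{D}_\alpha$ determines $\alpha$ as the intersection of its two cofaces minus the appropriate vertices, $f$ cannot simultaneously encode two different simplices $\alpha\ne\alpha'$. The care needed is to rule out the possibility that $(\alpha,\beta)$ and $(\alpha',\beta')$ have overlapping but unequal support, which is where I expect the argument to require the most attention; the key leverage is that each interior $(n-1)$-face pins down the whole bistellar pair via its two adjacent $n$-simplices.
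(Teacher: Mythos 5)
Parts (1)--(4) of your write-up are fine and correspond to what the paper dismisses as immediate. The problem is part (5), which is the only assertion the paper actually proves, and your argument there has a genuine gap at precisely the step you flag yourself. You claim that an $f\in\mathcal{D}_\alpha\cap\mathcal{D}_{\alpha'}$ "determines $\alpha$ as the intersection of its two cofaces minus the appropriate vertices." But the intersection of the two cofaces of $f$ is $f$ itself, an $(n-1)$-simplex of dimension $2h-1$, whereas $\dim\alpha=h$; for $h\geq 2$ both $\alpha$ and $\alpha'$ sit inside $f$, so the two cofaces do not distinguish them, and to recover $\alpha$ from $f$ you would have to delete the vertices of $\beta$ --- which is exactly the data you are trying to pin down. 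Similarly, "forcing the two moves to share top-dimensional simplices and hence the same supporting region" does not follow: sharing the two cofaces of a single $f$ gives only two of the $h+1$ simplices of $\mathbf{F}_\alpha$, not all of $\Lambda_\alpha$.

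What is missing is any use of the hypothesis $\Link_L\alpha=\partial\beta$ with $\beta\notin L$, and that is the engine of the paper's proof. There, one takes $f\in\mathcal{D}_\alpha\cap\mathcal{D}_{\alpha'}$, notes $\alpha\subseteq f$ and $f=(\alpha'\cup\beta')\setminus\{v',w'\}$ with $v',w'\in\beta'$, observes that distinctness of the pairs allows one to assume $v'\notin\beta$, and then uses that $F=(\alpha'\cup\beta')\setminus\{w'\}$ is an $n$-simplex of $L$ containing $\alpha$, so $F\setminus\alpha$ is a face of $\Link_L\alpha=\partial\beta$ and hence $v'\in\beta$ --- a contradiction. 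Your coface idea can in fact be completed in the same spirit: the union of the two shared cofaces gives $\alpha\cup\beta=\alpha'\cup\beta'$, and then $\beta\notin L$ combined with $(\alpha'\cup\beta')\setminus\{u\}\in L$ for every $u\in\beta'$ forces $\beta'\subseteq\beta$, hence $\beta=\beta'$ and $\alpha=\alpha'$. But that completion again rests on the link and missing-face conditions, which your proposal never invokes; as written, the key implication that a common element of $\mathcal{D}_\alpha\cap\mathcal{D}_{\alpha'}$ forces $(\alpha,\beta)=(\alpha',\beta')$ is asserted rather than proved.
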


 \begin{proof}
 Properties (1)--(4) are immediate. It only remains to show that the property~(5) holds.

Assume that $\mathcal{D}_{\alpha}\cap \mathcal{D}_{\alpha'}\not=\emptyset$. Then there is at least one $(n-1)$-simplex $f\in \mathcal{D}_{\alpha}\cap \mathcal{D}_{\alpha'}$ such that
\begin{enumerate}
\item[(a)] $\alpha\subseteq f$ and $\alpha'\subseteq f$ (since $\alpha=\bigcap_{I\in \mathcal{D}_{\alpha}}I$ and $\alpha'=\bigcap_{J\in \mathcal{D}_{\alpha'}}J$); and furthermore
\item[(b)] there are two vertices $v, w\in \beta$ and two vertices $v', w'\in \beta'$ such that
\[
f\,=\,(\alpha\cup\beta)\setminus\{v, w\}\,=\,(\alpha'\cup\beta')\setminus\{v', w'\}.
\]
\end{enumerate}
Since $(\alpha, \beta)$ and $(\alpha', \beta')$ are different, at most one of $v'$ and $w'$ is in $\beta$. Without the loss of generality, assume that $v'\not\in \beta$. Since $F=(\alpha'\cup\beta')\setminus\{ w'\}$ is an $n$-simplex of $L$ and $\alpha\subset f\subset F$, we have that $F\setminus\alpha$ is a simplex in  $\Link_L\alpha=\partial \beta$, so $v'\in F$ is a $0$-simplex of $\partial \beta$, that is, $v'\in \beta$. This is a contradiction. Thus $\mathcal{D}_{\alpha}\cap \mathcal{D}_{\alpha'}=\emptyset$ as desired.
 \end{proof}

\begin{example}\label{pairs}
Let $K$ be a triangulation with five vertices of the $2$-sphere, as shown in Figure~\ref{fig:2-sph}.
\setlength{\unitlength}{1184sp}%
\begingroup\makeatletter\ifx\SetFigFont\undefined%
\gdef\SetFigFont#1#2#3#4#5{%
  \reset@font\fontsize{#1}{#2pt}%
  \fontfamily{#3}\fontseries{#4}\fontshape{#5}%
  \selectfont}%
\fi\endgroup%
\begin{figure}
\begin{picture}(4044,3980)(1051,-4319)
\thicklines
\put(3301,-4261){\line(-3, 4){1800}}
{\color[rgb]{0,0,0}\multiput(1501,-1861)(400.00000,0.00000){8}{\line( 1, 0){200.000}}
}%
\put(3001,-361){\line(-1,-1){1500}}
\put(1501,-1861){\line( 3,-1){1800}}
\put(3301,-2461){\line( 2, 1){1200}}
\put(4501,-1861){\line(-1, 1){1500}}
\multiput(3001,-361)(4.58748,-27.52489){77}{\makebox(22.2222,33.3333){\SetFigFont{7}{8.4}{\rmdefault}{\mddefault}{\updefault}.}}
\put(3301,-2461){\line( 0,-1){1800}}
\put(3301,-4261){\line( 1, 2){1200}}
\put(4726,-1936){\makebox(0,0)[lb]{\smash{{\SetFigFont{5}{6.0}{\rmdefault}{\mddefault}{\updefault}$3$}}}}
\put(3451,-511){\makebox(0,0)[lb]{\smash{{\SetFigFont{5}{6.0}{\rmdefault}{\mddefault}{\updefault}$4$}}}}
\put(3601,-4261){\makebox(0,0)[lb]{\smash{{\SetFigFont{5}{6.0}{\rmdefault}{\mddefault}{\updefault}$5$}}}}
\put(1051,-1936){\makebox(0,0)[lb]{\smash{{\SetFigFont{5}{6.0}{\rmdefault}{\mddefault}{\updefault}$1$}}}}
\put(3451,-2686){\makebox(0,0)[lb]{\smash{{\SetFigFont{5}{6.0}{\rmdefault}{\mddefault}{\updefault}$2$}}}}
\end{picture}%
\caption{Triangulation of the 2-sphere with five vertices}
\label{fig:2-sph}
\centering
\end{figure}
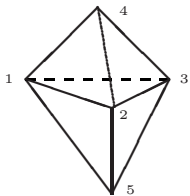
Then there are ten elements in $[K]$ and thirty bistellar pairs in $\mathcal{S}_{bp}^{[K]}$, which are described with their bistellar moves in Figure~\ref{fig:bm}.
\setlength{\unitlength}{1105sp}%
\begingroup\makeatletter\ifx\SetFigFont\undefined%
\gdef\SetFigFont#1#2#3#4#5{%
  \reset@font\fontsize{#1}{#2pt}%
  \fontfamily{#3}\fontseries{#4}\fontshape{#5}%
  \selectfont}%
\fi\endgroup%
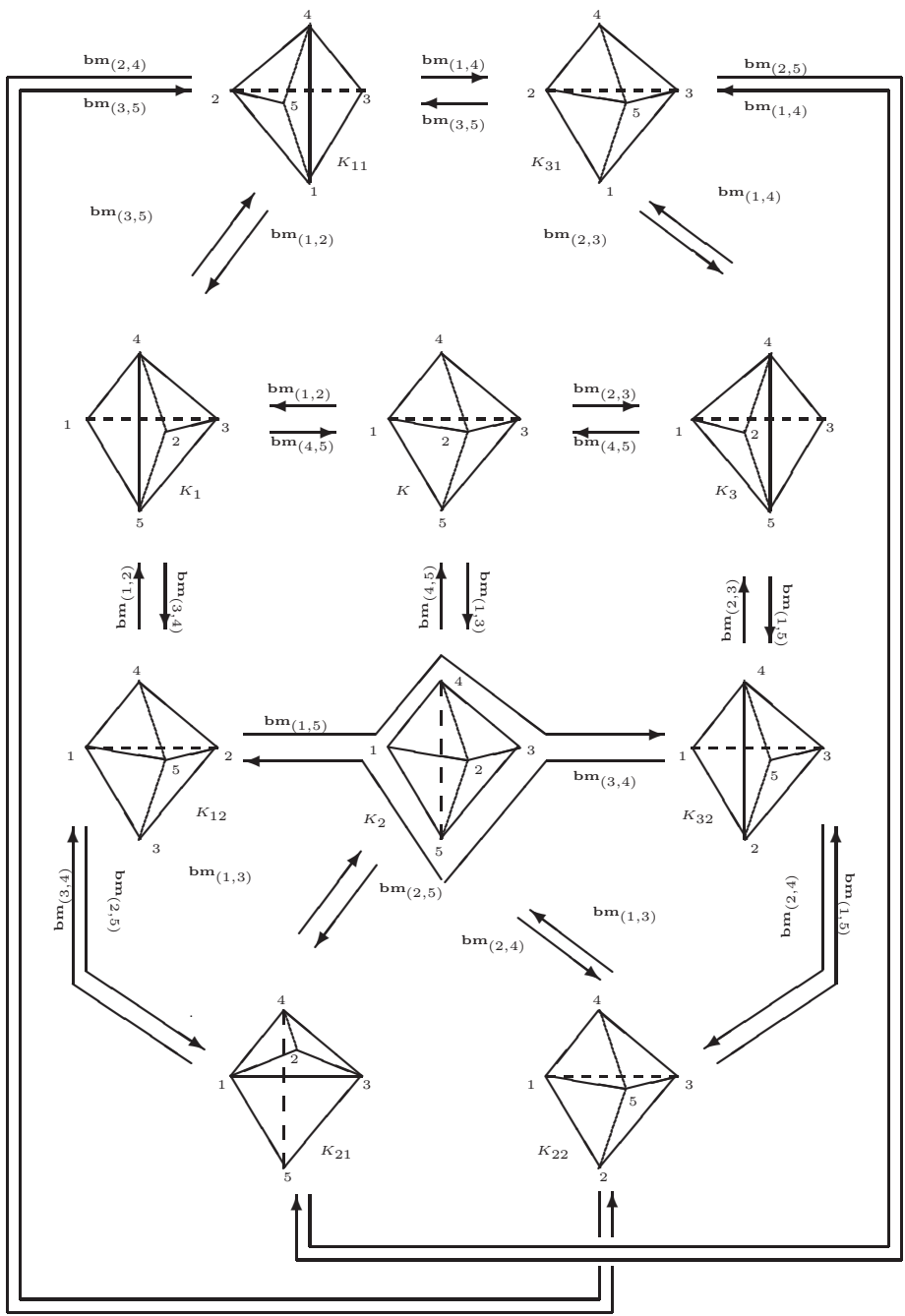
\begin{figure}
\begin{picture}(20444,29728)(-3621,-10283)
\thicklines
\put(6301,11639){\line(-4,-5){1200}}
\put(5101,10139){\line( 3,-5){1244.118}}
\put(6301,8039){\line( 5, 6){1770.492}}
\put(8101,10139){\line(-6, 5){1800}}
\multiput(13797,8052)(-9.52381,28.57143){64}{\makebox(23.8095,35.7143){\SetFigFont{7}{8.4}{\rmdefault}{\mddefault}{\updefault}.}}
\multiput(6301,4139)(0.00000,-654.54545){6}{\line( 0,-1){327.273}}
\put(6283,4154){\line(-4,-5){1200}}
\put(5083,2654){\line( 3,-5){1244.118}}
\put(6283,554){\line( 5, 6){1770.492}}
\put(8083,2654){\line(-6, 5){1800}}
\multiput(6901,2339)(-9.52381,-28.57143){64}{\makebox(23.8095,35.7143){\SetFigFont{7}{8.4}{\rmdefault}{\mddefault}{\updefault}.}}
\put(5101,2639){\line( 6,-1){1800}}
\put(6901,2339){\line( 4, 1){1200}}
\multiput(6301,4139)(9.52381,-28.57143){64}{\makebox(23.8095,35.7143){\SetFigFont{7}{8.4}{\rmdefault}{\mddefault}{\updefault}.}}
\put(6301,5339){\vector( 0, 1){1500}}
\put(6901,6839){\vector( 0,-1){1500}}
\put(9901,19139){\line(-4,-5){1200}}
\put(8701,17639){\line( 3,-5){1244.118}}
\put(9901,15539){\line( 5, 6){1770.492}}
\put(11701,17639){\line(-6, 5){1800}}
\put(8705,17663){\line( 6,-1){1800}}
\multiput(10501,17339)(-9.52381,-28.57143){64}{\makebox(23.8095,35.7143){\SetFigFont{7}{8.4}{\rmdefault}{\mddefault}{\updefault}.}}
\multiput(8701,17639)(400.00000,0.00000){8}{\line( 1, 0){200.000}}
\multiput(9901,19139)(9.52381,-28.57143){64}{\makebox(23.8095,35.7143){\SetFigFont{7}{8.4}{\rmdefault}{\mddefault}{\updefault}.}}
\put(10501,17339){\line( 4, 1){1200}}
\put(7351,17339){\vector(-1, 0){1500}}
\put(-1795,2663){\line( 6,-1){1800}}
\multiput(-1799,2639)(400.00000,0.00000){8}{\line( 1, 0){200.000}}
\multiput(-599,4139)(9.52381,-28.57143){64}{\makebox(23.8095,35.7143){\SetFigFont{7}{8.4}{\rmdefault}{\mddefault}{\updefault}.}}
\put(  1,2339){\line( 4, 1){1200}}
\multiput(  1,2339)(-9.52381,-28.57143){64}{\makebox(23.8095,35.7143){\SetFigFont{7}{8.4}{\rmdefault}{\mddefault}{\updefault}.}}
\put(-599,4139){\line(-4,-5){1200}}
\put(-1799,2639){\line( 3,-5){1244.118}}
\put(-599,539){\line( 5, 6){1770.492}}
\put(1201,2639){\line(-6, 5){1800}}
\put(9885,-3370){\line(-4,-5){1200}}
\put(8685,-4870){\line( 3,-5){1244.118}}
\put(9885,-6970){\line( 5, 6){1770.492}}
\put(11685,-4870){\line(-6, 5){1800}}
\put(2701,-3361){\line(-4,-5){1200}}
\put(1501,-4861){\line( 3,-5){1244.118}}
\put(2701,-6961){\line( 5, 6){1770.492}}
\put(4501,-4861){\line(-6, 5){1800}}
\multiput(8701,-4861)(400.00000,0.00000){8}{\line( 1, 0){200.000}}
\multiput(10501,-5161)(-9.52381,-28.57143){64}{\makebox(23.8095,35.7143){\SetFigFont{7}{8.4}{\rmdefault}{\mddefault}{\updefault}.}}
\multiput(9901,-3361)(9.52381,-28.57143){64}{\makebox(23.8095,35.7143){\SetFigFont{7}{8.4}{\rmdefault}{\mddefault}{\updefault}.}}
\put(10501,-5161){\line( 4, 1){1200}}
\put(8701,-4861){\line( 6,-1){1800}}
\multiput(2701,-3361)(0.00000,-654.54545){6}{\line( 0,-1){327.273}}
\put(1501,-4861){\line( 1, 0){3000}}
\put(3001,-4261){\line( 5,-2){1500}}
\multiput(3001,-4261)(-9.67742,29.03226){32}{\makebox(23.8095,35.7143){\SetFigFont{7}{8.4}{\rmdefault}{\mddefault}{\updefault}.}}
\put(3001,-4261){\line(-5,-2){1500}}
\put(-599,5339){\vector( 0, 1){1500}}
\put(  1,6839){\vector( 0,-1){1500}}
\put(3304,19116){\line( 4,-5){1200}}
\put(4504,17616){\line(-3,-5){1244.118}}
\put(3304,15516){\line(-5, 6){1770.492}}
\put(1504,17616){\line( 6, 5){1800}}
\put(3301,19139){\line( 0,-1){3600}}
\multiput(3297,15552)(-9.52381,28.57143){64}{\makebox(23.8095,35.7143){\SetFigFont{7}{8.4}{\rmdefault}{\mddefault}{\updefault}.}}
\multiput(3301,19139)(-9.52381,-28.57143){64}{\makebox(23.8095,35.7143){\SetFigFont{7}{8.4}{\rmdefault}{\mddefault}{\updefault}.}}
\put(2701,17339){\line(-4, 1){1200}}
\multiput(1576,17639)(400.00000,0.00000){8}{\line( 1, 0){200.000}}
\put(13228,4129){\line(-4,-5){1200}}
\put(12028,2629){\line( 3,-5){1244.118}}
\put(13228,529){\line( 5, 6){1770.492}}
\put(15028,2629){\line(-6, 5){1800}}
\put(13201,4139){\line( 0,-1){3600}}
\multiput(13801,2339)(-9.52381,-28.57143){64}{\makebox(23.8095,35.7143){\SetFigFont{7}{8.4}{\rmdefault}{\mddefault}{\updefault}.}}
\multiput(13201,4139)(9.52381,-28.57143){64}{\makebox(23.8095,35.7143){\SetFigFont{7}{8.4}{\rmdefault}{\mddefault}{\updefault}.}}
\put(13801,2339){\line( 4, 1){1200}}
\multiput(12001,2639)(400.00000,0.00000){8}{\line( 1, 0){200.000}}
\multiput(6301,11639)(9.52381,-28.57143){64}{\makebox(23.8095,35.7143){\SetFigFont{7}{8.4}{\rmdefault}{\mddefault}{\updefault}.}}
\put(6901,9839){\line( 4, 1){1200}}
\multiput(6901,9839)(-9.52381,-28.57143){64}{\makebox(23.8095,35.7143){\SetFigFont{7}{8.4}{\rmdefault}{\mddefault}{\updefault}.}}
\put(13801,6539){\vector( 0,-1){1500}}
\put(13201,5039){\vector( 0, 1){1500}}
\put(5101,10139){\line( 6,-1){1800}}
\multiput(5101,10139)(400.00000,0.00000){8}{\line( 1, 0){200.000}}
\put(-581,11624){\line(-4,-5){1200}}
\put(-1781,10124){\line( 3,-5){1244.118}}
\put(-581,8024){\line( 5, 6){1770.492}}
\put(1219,10124){\line(-6, 5){1800}}
\multiput(-577,11646)(9.52381,-28.57143){64}{\makebox(23.8095,35.7143){\SetFigFont{7}{8.4}{\rmdefault}{\mddefault}{\updefault}.}}
\put( 23,9846){\line( 4, 1){1200}}
\multiput(  1,9839)(-9.52381,-28.57143){64}{\makebox(23.8095,35.7143){\SetFigFont{7}{8.4}{\rmdefault}{\mddefault}{\updefault}.}}
\multiput(-1799,10139)(400.00000,0.00000){8}{\line( 1, 0){200.000}}
\put(-599,11639){\line( 0,-1){3600}}
\put(13801,11639){\line( 0,-1){3600}}
\put(3901,10439){\vector(-1, 0){1500}}
\put(2401,9839){\vector( 1, 0){1500}}
\put(9301,10439){\vector( 1, 0){1500}}
\put(10801,9839){\vector(-1, 0){1500}}
\put(13809,11610){\line( 4,-5){1200}}
\put(15009,10110){\line(-3,-5){1244.118}}
\put(13809,8010){\line(-5, 6){1770.492}}
\put(12009,10110){\line( 6, 5){1800}}
\put(5851,17939){\vector( 1, 0){1500}}
\multiput(12076,10139)(400.00000,0.00000){8}{\line( 1, 0){200.000}}
\multiput(13823,11630)(-9.52381,-28.57143){64}{\makebox(23.8095,35.7143){\SetFigFont{7}{8.4}{\rmdefault}{\mddefault}{\updefault}.}}
\put(13223,9830){\line(-4, 1){1200}}
\put(1351,2339){\makebox(0,0)[lb]{\smash{{\SetFigFont{5}{6.0}{\rmdefault}{\mddefault}{\updefault}$2$}}}}
\put(13801,1889){\makebox(0,0)[lb]{\smash{{\SetFigFont{5}{6.0}{\rmdefault}{\mddefault}{\updefault}$5$}}}}
\put(13351,239){\makebox(0,0)[lb]{\smash{{\SetFigFont{5}{6.0}{\rmdefault}{\mddefault}{\updefault}$2$}}}}
\put(-299,239){\makebox(0,0)[lb]{\smash{{\SetFigFont{5}{6.0}{\rmdefault}{\mddefault}{\updefault}$3$}}}}
\put(10576,-5536){\makebox(0,0)[lb]{\smash{{\SetFigFont{5}{6.0}{\rmdefault}{\mddefault}{\updefault}$5$}}}}
\put(8251,-5161){\makebox(0,0)[lb]{\smash{{\SetFigFont{5}{6.0}{\rmdefault}{\mddefault}{\updefault}$1$}}}}
\put(11776,989){\makebox(0,0)[lb]{\smash{{\SetFigFont{5}{6.0}{\rmdefault}{\mddefault}{\updefault}$K_{32}$}}}}
\put(676,1064){\makebox(0,0)[lb]{\smash{{\SetFigFont{5}{6.0}{\rmdefault}{\mddefault}{\updefault}$K_{12}$}}}}
\put(3526,-6661){\makebox(0,0)[lb]{\smash{{\SetFigFont{5}{6.0}{\rmdefault}{\mddefault}{\updefault}$K_{21}$}}}}
\put(8476,-6661){\makebox(0,0)[lb]{\smash{{\SetFigFont{5}{6.0}{\rmdefault}{\mddefault}{\updefault}$K_{22}$}}}}
\put(301,8414){\makebox(0,0)[lb]{\smash{{\SetFigFont{5}{6.0}{\rmdefault}{\mddefault}{\updefault}$K_1$}}}}
\put(3901,15914){\makebox(0,0)[lb]{\smash{{\SetFigFont{5}{6.0}{\rmdefault}{\mddefault}{\updefault}$K_{11}$}}}}
\put(12526,8414){\makebox(0,0)[lb]{\smash{{\SetFigFont{5}{6.0}{\rmdefault}{\mddefault}{\updefault}$K_3$}}}}
\put(8326,15914){\makebox(0,0)[lb]{\smash{{\SetFigFont{5}{6.0}{\rmdefault}{\mddefault}{\updefault}$K_{31}$}}}}
\put(6601,4064){\makebox(0,0)[lb]{\smash{{\SetFigFont{5}{6.0}{\rmdefault}{\mddefault}{\updefault}$4$}}}}
\put(6151,164){\makebox(0,0)[lb]{\smash{{\SetFigFont{5}{6.0}{\rmdefault}{\mddefault}{\updefault}$5$}}}}
\put(4651,2489){\makebox(0,0)[lb]{\smash{{\SetFigFont{5}{6.0}{\rmdefault}{\mddefault}{\updefault}$1$}}}}
\put(8251,2489){\makebox(0,0)[lb]{\smash{{\SetFigFont{5}{6.0}{\rmdefault}{\mddefault}{\updefault}$3$}}}}
\put(4501,914){\makebox(0,0)[lb]{\smash{{\SetFigFont{5}{6.0}{\rmdefault}{\mddefault}{\updefault}$K_2$}}}}
\put(-1874,18239){\makebox(0,0)[lb]{\smash{{\SetFigFont{5}{6.0}{\rmdefault}{\mddefault}{\updefault}${\bf bm}_{(2,4)}$}}}}
\put(-1874,17264){\makebox(0,0)[lb]{\smash{{\SetFigFont{5}{6.0}{\rmdefault}{\mddefault}{\updefault}${\bf bm}_{(3,5)}$}}}}
\put(5851,18239){\makebox(0,0)[lb]{\smash{{\SetFigFont{5}{6.0}{\rmdefault}{\mddefault}{\updefault}${\bf bm}_{(1,4)}$}}}}
\put(5851,16889){\makebox(0,0)[lb]{\smash{{\SetFigFont{5}{6.0}{\rmdefault}{\mddefault}{\updefault}${\bf bm}_{(3,5)}$}}}}
\put(13201,18164){\makebox(0,0)[lb]{\smash{{\SetFigFont{5}{6.0}{\rmdefault}{\mddefault}{\updefault}${\bf bm}_{(2,5)}$}}}}
\put(13201,17189){\makebox(0,0)[lb]{\smash{{\SetFigFont{5}{6.0}{\rmdefault}{\mddefault}{\updefault}${\bf bm}_{(1,4)}$}}}}
\put(-1724,14789){\makebox(0,0)[lb]{\smash{{\SetFigFont{5}{6.0}{\rmdefault}{\mddefault}{\updefault}${\bf bm}_{(3,5)}$}}}}
\put(2401,14264){\makebox(0,0)[lb]{\smash{{\SetFigFont{5}{6.0}{\rmdefault}{\mddefault}{\updefault}${\bf bm}_{(1,2)}$}}}}
\put(2326,10739){\makebox(0,0)[lb]{\smash{{\SetFigFont{5}{6.0}{\rmdefault}{\mddefault}{\updefault}${\bf bm}_{(1,2)}$}}}}
\put(2401,9539){\makebox(0,0)[lb]{\smash{{\SetFigFont{5}{6.0}{\rmdefault}{\mddefault}{\updefault}${\bf bm}_{(4,5)}$}}}}
\put(9301,10739){\makebox(0,0)[lb]{\smash{{\SetFigFont{5}{6.0}{\rmdefault}{\mddefault}{\updefault}${\bf bm}_{(2,3)}$}}}}
\put(9301,9539){\makebox(0,0)[lb]{\smash{{\SetFigFont{5}{6.0}{\rmdefault}{\mddefault}{\updefault}${\bf bm}_{(4,5)}$}}}}
\put(-899,5339){\rotatebox{90.0}{\makebox(0,0)[lb]{\smash{{\SetFigFont{5}{6.0}{\rmdefault}{\mddefault}{\updefault}${\bf bm}_{(1,2)}$}}}}}
\put(301,6689){\rotatebox{270.0}{\makebox(0,0)[lb]{\smash{{\SetFigFont{5}{6.0}{\rmdefault}{\mddefault}{\updefault}${\bf bm}_{(3,4)}$}}}}}
\put(6001,5339){\rotatebox{90.0}{\makebox(0,0)[lb]{\smash{{\SetFigFont{5}{6.0}{\rmdefault}{\mddefault}{\updefault}${\bf bm}_{(4,5)}$}}}}}
\put(7201,6689){\rotatebox{270.0}{\makebox(0,0)[lb]{\smash{{\SetFigFont{5}{6.0}{\rmdefault}{\mddefault}{\updefault}${\bf bm}_{(1,3)}$}}}}}
\put(12901,5039){\rotatebox{90.0}{\makebox(0,0)[lb]{\smash{{\SetFigFont{5}{6.0}{\rmdefault}{\mddefault}{\updefault}${\bf bm}_{(2,3)}$}}}}}
\put(14101,6389){\rotatebox{270.0}{\makebox(0,0)[lb]{\smash{{\SetFigFont{5}{6.0}{\rmdefault}{\mddefault}{\updefault}${\bf bm}_{(1,5)}$}}}}}
\put(2251,3164){\makebox(0,0)[lb]{\smash{{\SetFigFont{5}{6.0}{\rmdefault}{\mddefault}{\updefault}${\bf bm}_{(1,5)}$}}}}
\put(9301,1889){\makebox(0,0)[lb]{\smash{{\SetFigFont{5}{6.0}{\rmdefault}{\mddefault}{\updefault}${\bf bm}_{(3,4)}$}}}}
\put(-2324,-1561){\rotatebox{90.0}{\makebox(0,0)[lb]{\smash{{\SetFigFont{5}{6.0}{\rmdefault}{\mddefault}{\updefault}${\bf bm}_{(3,4)}$}}}}}
\put(4876,-661){\makebox(0,0)[lb]{\smash{{\SetFigFont{5}{6.0}{\rmdefault}{\mddefault}{\updefault}${\bf bm}_{(2,5)}$}}}}
\put(6751,-1861){\makebox(0,0)[lb]{\smash{{\SetFigFont{5}{6.0}{\rmdefault}{\mddefault}{\updefault}${\bf bm}_{(2,4)}$}}}}
\put(9751,-1186){\makebox(0,0)[lb]{\smash{{\SetFigFont{5}{6.0}{\rmdefault}{\mddefault}{\updefault}${\bf bm}_{(1,3)}$}}}}
\put(15526,-211){\rotatebox{270.0}{\makebox(0,0)[lb]{\smash{{\SetFigFont{5}{6.0}{\rmdefault}{\mddefault}{\updefault}${\bf bm}_{(1,5)}$}}}}}
\put(-1124,-136){\rotatebox{270.0}{\makebox(0,0)[lb]{\smash{{\SetFigFont{5}{6.0}{\rmdefault}{\mddefault}{\updefault}${\bf bm}_{(2,5)}$}}}}}
\put(526,-286){\makebox(0,0)[lb]{\smash{{\SetFigFont{5}{6.0}{\rmdefault}{\mddefault}{\updefault}${\bf bm}_{(1,3)}$}}}}
\put(14251,-1711){\rotatebox{90.0}{\makebox(0,0)[lb]{\smash{{\SetFigFont{5}{6.0}{\rmdefault}{\mddefault}{\updefault}${\bf bm}_{(2,4)}$}}}}}
\put(12601,15239){\makebox(0,0)[lb]{\smash{{\SetFigFont{5}{6.0}{\rmdefault}{\mddefault}{\updefault}${\bf bm}_{(1,4)}$}}}}
\put(8626,14264){\makebox(0,0)[lb]{\smash{{\SetFigFont{5}{6.0}{\rmdefault}{\mddefault}{\updefault}${\bf bm}_{(2,3)}$}}}}
\put(2337,14893){\vector(-3,-4){1404}}
\put(10847,14876){\vector( 4,-3){1872}}
\put(3060,-1610){\vector( 3, 4){1404}}
\put(8059,-1251){\vector( 4,-3){1872}}
\thinlines
{\color[rgb]{0,0,0}\put(601,-3511){\makebox(5.9524,41.6667){\SetFigFont{5}{6}{\rmdefault}{\mddefault}{\updefault}.}}
}%
\thicklines
\put(-2099,-2761){\vector( 0, 1){3600}}
\put(-2099,-2761){\line( 3,-2){2700}}
\put(-1799,839){\line( 0,-1){3300}}
\put(-1799,-2461){\vector( 3,-2){2700}}
\put(15001,866){\line( 0,-1){3300}}
\put(15001,-2434){\vector(-3,-2){2700}}
\put(15301,-2761){\vector( 0, 1){3600}}
\put(15301,-2761){\line(-3,-2){2700}}
\put(12601,17939){\line( 1, 0){4200}}
\put(16801,17939){\line( 0,-1){27000}}
\put(16801,-9061){\line(-1, 0){13800}}
\put(3001,-9061){\vector( 0, 1){1500}}
\put(16501,17639){\vector(-1, 0){3900}}
\put(16501,17639){\line( 0,-1){26400}}
\put(16501,-8761){\line(-1, 0){13200}}
\put(3301,-8761){\line( 0, 1){1200}}
\put(9901,-9211){\line( 0,-1){750}}
\put(9901,-9961){\line(-1, 0){13200}}
\put(-3299,-9961){\line( 0, 1){27600}}
\put(-3299,17639){\vector( 1, 0){3900}}
\put(601,17939){\line(-1, 0){4200}}
\put(-3599,17939){\line( 0,-1){28200}}
\put(-3599,-10261){\line( 1, 0){13800}}
\put(10201,-10261){\line( 0, 1){1050}}
\put(10201,-9211){\line( 0,-1){ 75}}
\put(10201,-8536){\vector( 0, 1){1050}}
\put(9901,-7486){\line( 0,-1){1050}}
\put(4833,-38){\vector(-3,-4){1404}}
\put(10203,-2475){\vector(-4, 3){1872}}
\put(631,13415){\vector( 3, 4){1404}}
\put(12932,13701){\vector(-4, 3){1872}}
\put(1801,2939){\line( 1, 0){3000}}
\put(4801,2939){\line( 5, 6){1500}}
\put(6301,4739){\line( 4,-3){2400}}
\put(8701,2939){\vector( 1, 0){2700}}
\put(11401,2339){\line(-1, 0){2700}}
\put(8701,2339){\line(-5,-6){2311.475}}
\put(6301,-361){\line(-2, 3){1800}}
\put(4501,2339){\vector(-1, 0){2700}}
\put(6151,11864){\makebox(0,0)[lb]{\smash{{\SetFigFont{5}{6.0}{\rmdefault}{\mddefault}{\updefault}$4$}}}}
\put(4651,9764){\makebox(0,0)[lb]{\smash{{\SetFigFont{5}{6.0}{\rmdefault}{\mddefault}{\updefault}$1$}}}}
\put(6301,9539){\makebox(0,0)[lb]{\smash{{\SetFigFont{5}{6.0}{\rmdefault}{\mddefault}{\updefault}$2$}}}}
\put(8101,9764){\makebox(0,0)[lb]{\smash{{\SetFigFont{5}{6.0}{\rmdefault}{\mddefault}{\updefault}$3$}}}}
\put(6226,7664){\makebox(0,0)[lb]{\smash{{\SetFigFont{5}{6.0}{\rmdefault}{\mddefault}{\updefault}$5$}}}}
\put(13651,11789){\makebox(0,0)[lb]{\smash{{\SetFigFont{5}{6.0}{\rmdefault}{\mddefault}{\updefault}$4$}}}}
\put(15076,9839){\makebox(0,0)[lb]{\smash{{\SetFigFont{5}{6.0}{\rmdefault}{\mddefault}{\updefault}$3$}}}}
\put(11626,9764){\makebox(0,0)[lb]{\smash{{\SetFigFont{5}{6.0}{\rmdefault}{\mddefault}{\updefault}$1$}}}}
\put(13351,9689){\makebox(0,0)[lb]{\smash{{\SetFigFont{5}{6.0}{\rmdefault}{\mddefault}{\updefault}$2$}}}}
\put(13726,7664){\makebox(0,0)[lb]{\smash{{\SetFigFont{5}{6.0}{\rmdefault}{\mddefault}{\updefault}$5$}}}}
\put(-749,11864){\makebox(0,0)[lb]{\smash{{\SetFigFont{5}{6.0}{\rmdefault}{\mddefault}{\updefault}$4$}}}}
\put(-2324,9914){\makebox(0,0)[lb]{\smash{{\SetFigFont{5}{6.0}{\rmdefault}{\mddefault}{\updefault}$1$}}}}
\put(1276,9839){\makebox(0,0)[lb]{\smash{{\SetFigFont{5}{6.0}{\rmdefault}{\mddefault}{\updefault}$3$}}}}
\put(151,9539){\makebox(0,0)[lb]{\smash{{\SetFigFont{5}{6.0}{\rmdefault}{\mddefault}{\updefault}$2$}}}}
\put(-674,7664){\makebox(0,0)[lb]{\smash{{\SetFigFont{5}{6.0}{\rmdefault}{\mddefault}{\updefault}$5$}}}}
\put(5251,8414){\makebox(0,0)[lb]{\smash{{\SetFigFont{5}{6.0}{\rmdefault}{\mddefault}{\updefault}$K$}}}}
\put(7051,2039){\makebox(0,0)[lb]{\smash{{\SetFigFont{5}{6.0}{\rmdefault}{\mddefault}{\updefault}$2$}}}}
\put(4501,17339){\makebox(0,0)[lb]{\smash{{\SetFigFont{5}{6.0}{\rmdefault}{\mddefault}{\updefault}$3$}}}}
\put(9751,19289){\makebox(0,0)[lb]{\smash{{\SetFigFont{5}{6.0}{\rmdefault}{\mddefault}{\updefault}$4$}}}}
\put(-749,4289){\makebox(0,0)[lb]{\smash{{\SetFigFont{5}{6.0}{\rmdefault}{\mddefault}{\updefault}$4$}}}}
\put(13501,4289){\makebox(0,0)[lb]{\smash{{\SetFigFont{5}{6.0}{\rmdefault}{\mddefault}{\updefault}$4$}}}}
\put(11701,2339){\makebox(0,0)[lb]{\smash{{\SetFigFont{5}{6.0}{\rmdefault}{\mddefault}{\updefault}$1$}}}}
\put(15001,2339){\makebox(0,0)[lb]{\smash{{\SetFigFont{5}{6.0}{\rmdefault}{\mddefault}{\updefault}$3$}}}}
\put(9751,-3211){\makebox(0,0)[lb]{\smash{{\SetFigFont{5}{6.0}{\rmdefault}{\mddefault}{\updefault}$4$}}}}
\put(11851,-5161){\makebox(0,0)[lb]{\smash{{\SetFigFont{5}{6.0}{\rmdefault}{\mddefault}{\updefault}$3$}}}}
\put(9901,-7261){\makebox(0,0)[lb]{\smash{{\SetFigFont{5}{6.0}{\rmdefault}{\mddefault}{\updefault}$2$}}}}
\put(2551,-3211){\makebox(0,0)[lb]{\smash{{\SetFigFont{5}{6.0}{\rmdefault}{\mddefault}{\updefault}$4$}}}}
\put(2851,-4561){\makebox(0,0)[lb]{\smash{{\SetFigFont{5}{6.0}{\rmdefault}{\mddefault}{\updefault}$2$}}}}
\put(1201,-5161){\makebox(0,0)[lb]{\smash{{\SetFigFont{5}{6.0}{\rmdefault}{\mddefault}{\updefault}$1$}}}}
\put(4501,-5161){\makebox(0,0)[lb]{\smash{{\SetFigFont{5}{6.0}{\rmdefault}{\mddefault}{\updefault}$3$}}}}
\put(2701,-7261){\makebox(0,0)[lb]{\smash{{\SetFigFont{5}{6.0}{\rmdefault}{\mddefault}{\updefault}$5$}}}}
\put(-2249,2339){\makebox(0,0)[lb]{\smash{{\SetFigFont{5}{6.0}{\rmdefault}{\mddefault}{\updefault}$1$}}}}
\put(2851,17189){\makebox(0,0)[lb]{\smash{{\SetFigFont{5}{6.0}{\rmdefault}{\mddefault}{\updefault}$5$}}}}
\put(3151,19289){\makebox(0,0)[lb]{\smash{{\SetFigFont{5}{6.0}{\rmdefault}{\mddefault}{\updefault}$4$}}}}
\put(8251,17489){\makebox(0,0)[lb]{\smash{{\SetFigFont{5}{6.0}{\rmdefault}{\mddefault}{\updefault}$2$}}}}
\put(10651,17039){\makebox(0,0)[lb]{\smash{{\SetFigFont{5}{6.0}{\rmdefault}{\mddefault}{\updefault}$5$}}}}
\put(11851,17489){\makebox(0,0)[lb]{\smash{{\SetFigFont{5}{6.0}{\rmdefault}{\mddefault}{\updefault}$3$}}}}
\put(10051,15239){\makebox(0,0)[lb]{\smash{{\SetFigFont{5}{6.0}{\rmdefault}{\mddefault}{\updefault}$1$}}}}
\put(3301,15239){\makebox(0,0)[lb]{\smash{{\SetFigFont{5}{6.0}{\rmdefault}{\mddefault}{\updefault}$1$}}}}
\put(1051,17339){\makebox(0,0)[lb]{\smash{{\SetFigFont{5}{6.0}{\rmdefault}{\mddefault}{\updefault}$2$}}}}
\put(151,2039){\makebox(0,0)[lb]{\smash{{\SetFigFont{5}{6.0}{\rmdefault}{\mddefault}{\updefault}$5$}}}}
\end{picture}%
\caption{Bistellar pairs and moves in $[K]$.}
\label{fig:bm}
\centering
\end{figure}
 \end{example}

\begin{definition}
The \emph{bistellar exchange graph} $\mathbb{G}_{[K]}$ of $[K]$ is defined as follows:
\begin{enumerate}
\item The vertex set of $\mathbb{G}_{[K]}$ is $[K]$.
\item Each edge corresponds to a bistellar $h$-move and its inverse. In other words, for $(\alpha, \beta)\in\mathcal{S}_{bp}^{[K]}$, if $(\alpha, \beta)$ is a bistellar pair in $L\in [K]$, then both $(\alpha, \beta)$ and $(\beta, \alpha)$ define an edge between $L$ and ${\bm}_\alpha L$.
 \end{enumerate}
 \end{definition}

The bistellar exchange graph $\mathbb{G}_{[K]}$ of $[K]$ indicates how all triangulated manifolds in $[K]$ exchange amongst themselves by performing bistellar $h$-moves.

\begin{example}
The bistellar exchange graph of $[K]$ in Example~\ref{pairs} is a trivalent graph with ten vertices shown in Figure~\ref{fig:graph}.
\setlength{\unitlength}{1184sp}%
\begingroup\makeatletter\ifx\SetFigFont\undefined%
\gdef\SetFigFont#1#2#3#4#5{%
  \reset@font\fontsize{#1}{#2pt}%
  \fontfamily{#3}\fontseries{#4}\fontshape{#5}%
  \selectfont}%
\fi\endgroup%
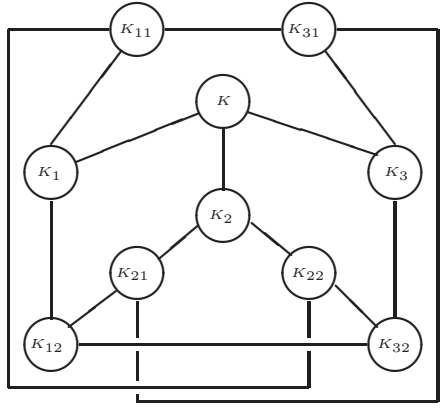
\begin{figure}
\begin{picture}(9044,8382)(3879,9217)
\put(6226,16964){\makebox(0,0)[lb]{\smash{{\SetFigFont{5}{6.0}{\rmdefault}{\mddefault}{\updefault}$K_{11}$}}}}
\thicklines
\put(4801,14039){\circle{1092}}
\put(4801,10439){\circle{1092}}
\put(12001,10439){\circle{1092}}
\put(6601,11939){\circle{1092}}
\put(8401,15539){\circle{1092}}
\put(8401,13139){\circle{1092}}
\put(10201,11939){\circle{1092}}
\put(10201,17039){\circle{1092}}
\put(6601,17039){\circle{1092}}
\put(12001,13439){\line( 0,-1){2400}}
\put(4801,13439){\line( 0,-1){2475}}
\put(6001,17039){\line(-1, 0){2100}}
\put(3901,17039){\line( 0,-1){7500}}
\put(3901,9539){\line( 1, 0){6300}}
\put(10201,9539){\line( 0, 1){750}}
\put(7201,17039){\line( 1, 0){2400}}
\put(12001,14639){\line(-3, 4){1449}}
\put(10801,17039){\line( 1, 0){2100}}
\put(12901,17039){\line( 0,-1){7800}}
\put(12901,9239){\line(-1, 0){6300}}
\put(6601,9239){\line( 0, 1){150}}
\put(8926,15314){\line( 3,-1){2700}}
\put(5326,14264){\line( 5, 2){2560.345}}
\put(4801,14639){\line( 3, 4){1476}}
\put(8401,14939){\line( 0,-1){1275}}
\put(9826,12314){\line(-6, 5){818.852}}
\put(7051,12239){\line( 6, 5){818.852}}
\put(5176,10814){\line( 4, 3){984}}
\put(11626,10814){\line(-1, 1){862.500}}
\put(5401,10439){\line( 1, 0){6000}}
\put(6601,10589){\line( 0, 1){750}}
\put(10201,10589){\line( 0, 1){750}}
\put(6601,9689){\line( 0, 1){600}}
\put(4501,13964){\makebox(0,0)[lb]{\smash{{\SetFigFont{5}{6.0}{\rmdefault}{\mddefault}{\updefault}$K_1$}}}}
\put(11776,13964){\makebox(0,0)[lb]{\smash{{\SetFigFont{5}{6.0}{\rmdefault}{\mddefault}{\updefault}$K_3$}}}}
\put(11626,10364){\makebox(0,0)[lb]{\smash{{\SetFigFont{5}{6.0}{\rmdefault}{\mddefault}{\updefault}$K_{32}$}}}}
\put(4351,10364){\makebox(0,0)[lb]{\smash{{\SetFigFont{5}{6.0}{\rmdefault}{\mddefault}{\updefault}$K_{12}$}}}}
\put(6151,11864){\makebox(0,0)[lb]{\smash{{\SetFigFont{5}{6.0}{\rmdefault}{\mddefault}{\updefault}$K_{21}$}}}}
\put(8251,15464){\makebox(0,0)[lb]{\smash{{\SetFigFont{5}{6.0}{\rmdefault}{\mddefault}{\updefault}$K$}}}}
\put(8101,13064){\makebox(0,0)[lb]{\smash{{\SetFigFont{5}{6.0}{\rmdefault}{\mddefault}{\updefault}$K_2$}}}}
\put(9826,11864){\makebox(0,0)[lb]{\smash{{\SetFigFont{5}{6.0}{\rmdefault}{\mddefault}{\updefault}$K_{22}$}}}}
\put(9751,16964){\makebox(0,0)[lb]{\smash{{\SetFigFont{5}{6.0}{\rmdefault}{\mddefault}{\updefault}$K_{31}$}}}}
\put(12001,14039){\circle{1092}}
\end{picture}%
\caption{Bistellar exchange graph.}
\label{fig:graph}
\centering
\end{figure}
 \end{example}

 \begin{proposition}
 The bistellar exchange graph $\mathbb{G}_{[K]}$ is connected.
 \end{proposition}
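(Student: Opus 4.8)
The plan is to reduce the connectivity statement directly to the definition of the equivalence relation that defines the vertex set $[K]$, so that the proof amounts to matching the notion of \emph{edge} in $\mathbb{G}_{[K]}$, move by move, with the notion of \emph{bistellar $h$-move} used to build the equivalence class. First I would fix two arbitrary vertices $L, L' \in [K]$. Since $L \sim L'$ by assumption, the definition of the equivalence relation on $\mathbb{TM}_{2h}$ provides a finite sequence of bistellar $h$-moves transforming $L$ into $L'$; that is, there exist intermediate triangulated manifolds $L = L_0, L_1, \dots, L_k = L'$ together with bistellar pairs $(\alpha_i, \beta_i)$ of type $h$ in $L_i$ such that $L_{i+1} = {\bm}_{\alpha_i} L_i$ for each $i = 0, \dots, k-1$.

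Next I would check that every intermediate manifold is genuinely a vertex of $\mathbb{G}_{[K]}$. Since $L_0 = L \in [K]$ and each $L_{i+1}$ is obtained from $L_i$ by a single bistellar $h$-move, transitivity of $\sim$ gives $L_i \in [K]$ for all $i$. In particular each pair $(\alpha_i, \beta_i)$ is a bistellar pair in $L_i \in [K]$, so $(\alpha_i, \beta_i) \in \mathcal{S}_{bp}^{L_i} \subseteq \mathcal{S}_{bp}^{[K]}$. By the definition of the bistellar exchange graph, this pair therefore contributes an edge of $\mathbb{G}_{[K]}$ joining $L_i$ and ${\bm}_{\alpha_i} L_i = L_{i+1}$.

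Finally I would assemble these edges: the sequence $L_0, L_1, \dots, L_k$ is then a walk in $\mathbb{G}_{[K]}$ from $L$ to $L'$, so $L$ and $L'$ lie in the same connected component. As $L$ and $L'$ were arbitrary elements of the vertex set $[K]$, I would conclude that $\mathbb{G}_{[K]}$ is connected. I do not expect any genuine obstacle here, since the definitions are arranged precisely so that a one-step bistellar $h$-move between two elements of $[K]$ is the same datum as an edge of $\mathbb{G}_{[K]}$; the only point requiring a moment's care is confirming that each intermediate $L_i$ belongs to $[K]$ (so that its bistellar pairs are recorded in $\mathcal{S}_{bp}^{[K]}$ and hence do furnish edges), and this follows at once from transitivity of the equivalence relation.
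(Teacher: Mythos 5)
Your proof is correct and is essentially the paper's argument: the paper simply says the connectivity "immediately follows from $[K]$ being an equivalence class," and your write-up is the careful unpacking of that one-liner, matching each bistellar $h$-move in the defining sequence to an edge of $\mathbb{G}_{[K]}$ and checking the intermediate manifolds stay in $[K]$ by transitivity.
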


 \begin{proof}
 This immediately follows from $[K]$ being an equivalence class.
 \end{proof}

\begin{remark}
Generally, the bistellar exchange graph $\mathbb{G}_{[K]}$ may not be regular.
\end{remark}

Now, to each vertex $L\in \mathbb{G}_{[K]}$ (that is, a triangulated manifold in $[K]$), we can associate two pieces of data: a bistellar seed $\Sigma_L=(\mathcal{X}(L), {\bf p}_L, B(L))$ and a field $\mathbb{F}\mathcal{X}(L)$. To each edge $\overline{LL'}$ in $\mathbb{G}_{[K]}$ (that is, there is a bistellar pair $(\alpha, \beta)\in \mathcal{S}_{bp}^{[K]}$ such that $(\alpha, \beta)\in \mathcal{S}_{bp}^L$ and $(\beta, \alpha)\in \mathcal{S}_{bp}^{L'}$, so $L'={\bm}_\alpha L$ and $L={\bm}_\beta L'$), we can associate the seed mutation $\Phi_{\alpha}\colon \Sigma_L\to \Sigma_{L'}$ and its inverse $\Phi_{\beta}\colon \Sigma_{L'}\to \Sigma_L$. We know from~\eqref{field-iso} that the seed mutation $\Phi_{\alpha}$ determines a $\mathbb{ZP}$-linear field isomorphism $\mathcal{L}_{(L, L')}\colon \mathbb{F}\mathcal{X}(L)\to\mathbb{F}\mathcal{X}(L')$ given by
\[
\mathcal{L}_{(L, L')}(x_f)\,=\,\varphi^{L}_{\alpha}(x_f)\,=\,
\begin{cases}
    x_f, & \text{if $f\in \mathcal{F}(L)\setminus \mathcal{F}(\Lambda_\alpha)$;}\\
    x_{\sigma(f)}, & \text{if $f\in  \mathcal{F}(\Lambda_\alpha)\setminus\mathcal{D}_{\alpha}$;}\\
    x_{\sigma(f)}=\frac{M^+_{L,\alpha,f}+M^-_{L,\alpha,f}}{x_f}, & \text{if $f\in \mathcal{D}_{\alpha}$,}
    \end{cases}
\]
and the isomorphism $\mathcal{L}_{(L', L)}$ determined by $\Phi_{\beta}$ is exactly $\mathcal{L}_{(L, L')}^{-1}$. By Corollary~\ref{field-1},  $\mathbb{F}\mathcal{X}(L)$ can be identified with $\mathbb{F}\mathcal{X}(L')$. Since $\mathbb{G}_{[K]}$ is connected, all the fields $\mathbb{F}\mathcal{X}(L)$, for $L\in [K]$, can be identified with each other, and then they can be regarded as a single field $\mathbb{F}$ such that:
 \begin{enumerate}
\item $\mathbb{F}$ contains all the elements in $\mathcal{X}_{[K]}=\bigcup _{L\in [K]}\mathcal{X}(L)$;
\item there are the exchange relations (\ref{e-relation}) in $\mathbb{F}$ for exchangeable bistellar variables
\[
\mathcal{EX}_{[K]}\,=\,\left\{ x_f\mid f\in \bigcup_{(\alpha, \beta)\in \mathcal{S}_{bp}^{[K]}}\mathcal{D}_{\alpha}\cup \mathcal{D}_{\beta} \right\}.
\]
\end{enumerate}

\begin{definition}\label{alg1}
The \emph{bistellar cluster algebra} $\mathcal{A}_{[K]}$ of $[K]$ is the subalgebra of $\mathbb{F}$, generated by all cluster variables $x_f\in \mathcal{X}_{[K]}$ with the exchange relations given by~\eqref{e-relation}.
\end{definition}

Definition~\ref{alg1} is still suitable for the case of  $|[K]|=1$. In this case, $\mathcal{A}_{[K]}$ is just a polynomial ring generated by all variables $x_f\in \mathcal{X}(K)$.

\begin{remark}
Since $\mathbb{G}_{[K]}$ is connected, all exchangeable bistellar variables $x_f\in \mathcal{EX}_{[K]}$ can be reduced to being rational functions in $\mathbb{F}\mathcal{X}(K)$ by using the exchange relations. So the bistellar cluster algebra $\mathcal{A}_{[K]}$ can  be regarded as a subalgebra of $\mathbb{F}\mathcal{X}(K)$ with $\Sigma_L=(\mathcal{X}(K), {\bf p}_K, B(K))$ as an initial bistellar seed.
\end{remark}

\begin{definition}\label{alg2}
For each $L\in [K]$, $\mathcal{A}_L$ is defined as  the subalgebra in $\mathbb{F}\mathcal{X}(L)$ with $\Sigma_L=(\mathcal{X}(L), {\bf p}_L, B(L))$ as an initial bistellar  seed.
\end{definition}

\begin{theorem}\label{main1}
Let $K, K'\in \mathbb{TM}_{2h}$ be two triangulated manifolds. If $K'$ is obtained from $K$ by a finite sequence of bistellar $h$-moves, then
$\mathcal{A}_{K}\cong \mathcal{A}_{K'}$.
\end{theorem}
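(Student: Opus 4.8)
The plan is to deduce the isomorphism from the observation that $K$ and $K'$ determine the \emph{same} equivalence class, and then transport the algebra $\mathcal{A}_{[K]}$ across the field isomorphisms constructed in the previous subsection. First I would note that the hypothesis ``$K'$ is obtained from $K$ by a finite sequence of bistellar $h$-moves'' says precisely that $K\sim K'$, so $K'\in[K]$ and hence $[K]=[K']$. By Definition~\ref{alg1} this gives $\mathcal{A}_{[K]}=\mathcal{A}_{[K']}$ as one and the same subalgebra of the common field $\mathbb{F}$, and by Definition~\ref{alg2} both $\mathcal{A}_K$ and $\mathcal{A}_{K'}$ are realizations of this single algebra inside $\mathbb{F}\mathcal{X}(K)$ and $\mathbb{F}\mathcal{X}(K')$, obtained by taking $\Sigma_K$ and $\Sigma_{K'}$ as initial seeds. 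The content of the theorem is thus that passing between two ``coordinate systems'' for the same abstract algebra is an isomorphism.

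Next I would make this identification explicit rather than relying on the ambient field $\mathbb{F}$. Since $\mathbb{G}_{[K]}$ is connected, choose a path $K=L_0,L_1,\dots,L_r=K'$ realizing the given sequence of moves, so that $L_{i+1}=\bm_{\alpha_i}L_i$ for bistellar pairs $(\alpha_i,\beta_i)$. Composing the $\mathbb{ZP}$-linear field isomorphisms of~\eqref{field-iso},
\[
\mathcal{L}\,=\,\mathcal{L}_{(L_{r-1},L_r)}\circ\cdots\circ\mathcal{L}_{(L_0,L_1)}\colon \mathbb{F}\mathcal{X}(K)\longrightarrow\mathbb{F}\mathcal{X}(K'),
\]
produces a $\mathbb{ZP}$-linear field isomorphism, each factor being invertible by Corollary~\ref{field-1}. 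It then remains to show that $\mathcal{L}$ restricts to an isomorphism of subalgebras, i.e.\ $\mathcal{L}(\mathcal{A}_K)=\mathcal{A}_{K'}$. For this I would use that $\mathcal{A}_K$ is generated inside $\mathbb{F}\mathcal{X}(K)$ by the reductions of all cluster variables $x_f\in\mathcal{X}_{[K]}$, and that by~\eqref{field-iso} each edge map $\mathcal{L}_{(L_i,L_{i+1})}$ sends $x_f$ to $x_f$, to $x_{\sigma(f)}$, or to the rational expression dictated by the exchange relation~\eqref{e-relation}. Hence $\mathcal{L}$ carries the generating set of $\mathcal{A}_K$ onto that of $\mathcal{A}_{K'}$.

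The hard part will be verifying that $\mathcal{L}$ respects the relations, not merely the generators. Unlike a classical cluster algebra, $\mathcal{A}_K$ carries nontrivial relations among its cluster variables, namely the exchange relations, so one must check that $\mathcal{L}$ sends every exchange relation holding in $\mathbb{F}\mathcal{X}(K)$ to one holding in $\mathbb{F}\mathcal{X}(K')$. The key input is Lemma~\ref{pro3}: for $f\in\mathcal{D}_\alpha$ one has $M^\pm_{K,\alpha,f}=M^\mp_{L,\beta,\sigma(f)}$, so a single bistellar seed mutation carries the exchange relation $x_fx_{\sigma(f)}=M^++M^-$ at one seed to the corresponding relation at the adjacent seed. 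Iterating this along the path $L_0,\dots,L_r$ shows that $\mathcal{L}$ intertwines the two families of exchange relations, so the bijection of generators extends to a well-defined algebra homomorphism. I would close by observing that $\mathcal{L}^{-1}$, built from the reverse path via the inverse mutations $\Phi_{\beta_i}$ (Proposition~\ref{b-seed-m}), is a two-sided inverse, giving $\mathcal{A}_K\cong\mathcal{A}_{K'}$.
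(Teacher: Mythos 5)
Your proposal is correct and follows essentially the same route as the paper: the paper's proof is literally the single sentence ``This is because $[K]=[K']$,'' which implicitly rests on exactly the machinery you spell out --- the connectedness of $\mathbb{G}_{[K]}$, the identification of the fields $\mathbb{F}\mathcal{X}(L)$ via the composed isomorphisms $\mathcal{L}_{(L,L')}$ of~\eqref{field-iso} (Corollary~\ref{field-1}), and the compatibility of the exchange relations under mutation (Lemma~\ref{pro3}, Proposition~\ref{b-seed-m}). Your version is simply the explicit unpacking of that one-liner, with no substantive deviation.
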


\begin{proof}
This is because $[K]=[K']$.
\end{proof}


\subsection{A Simple 2-dimensional Example}\label{example1}

Let $K$ be the triangulation of the 2-sphere with five vertices in Example~\ref{pairs}. So there are ten elements in $[K]$ and thirty bistellar pairs in $\mathcal{S}_{bp}^{[K]}$, as shown in Example~\ref{pairs}.
First we see easily that
\begin{enumerate}
\item The union $\bigcup_{L\in [K]}\mathcal{F}(L)$ is equal to $\mathcal{F}_1^{2^{[5]}}$. So
$\mathcal{X}_{[K]}$ consists of ten elements.  More precisely,
\[
\mathcal{X}_{[K]}\,=\,\{x_{(1,2)}, x_{(1,3)}, x_{(1,4)}, x_{(1,5)}, x_{(2,3)}, x_{(2,4)}, x_{(2,5)}, x_{(3,4)}, x_{(3,5)}, x_{(4,5)}\}.
\]
\item The set $\mathcal{EX}_{[K]}$ of all exchangeable bistellar variables is  the same as $\mathcal{X}_{[K]}$. So $\mathcal{A}_{[K]}$ is the subalgebra of $\mathbb{F}$, generated by the ten elements in $\mathcal{X}_{[K]}$.
\end{enumerate}

To determine the structure of $\mathcal{A}_{[K]}$, let us look at the exchange relations between  exchangeable bistellar variables. Since $\mathcal{S}_{bp}^{[K]}$ contains thirty bistellar pairs, there are 15 exchange relations. These exchange relations with the relations among the ${\bf p}_L$, for $L\in [K]$, are stated as follows:
\begin{enumerate}
\item There is the following exchange relation between $\mathcal{X}(K)$ and $\mathcal{X}(K_1)$.
\begin{align*}
x_{(1,2)}x_{(4,5)}\,=\, & p_{K, x_{(1,2)}}^+x_{(1,4)}x_{(2,5)}+p_{K,x_{(1,2)}}^-x_{(1,5)}x_{(2,4)} \\
=\, & p_{K_1, x_{(4,5)}}^-x_{(1,4)}x_{(2,5)}+p_{K_1, x_{(4,5)}}^+x_{(1,5)}x_{(2,4)}
\end{align*}
with $p_{K, x_{(1,2)}}^\pm=p_{K_1, x_{(4,5)}}^\mp$.

\item There is the following exchange relation between $\mathcal{X}(K)$ and $\mathcal{X}(K_2)$.
\begin{align*}
x_{(1,3)}x_{(4,5)}\,=\, &p_{K, x_{(1,3)}}^+x_{(1,4)}x_{(3,5)}+p_{K, x_{(1,3)}}^-x_{(1,5)}x_{(3,4)}\\
=\, &p_{K_2, x_{(4,5)}}^-x_{(1,4)}x_{(3,5)}+p_{K_2, x_{(4,5)}}^+x_{(1,5)}x_{(3,4)}
\end{align*}
with $p_{K, x_{(1,3)}}^\pm=p_{K_2, x_{(4,5)}}^\mp$.

\item There is the following exchange relation between $\mathcal{X}(K)$ and $\mathcal{X}(K_3)$.
\begin{align*}
x_{(2,3)}x_{(4,5)}\,=\, & p_{K, x_{(2,3)}}^+x_{(2,4)}x_{(3,5)}+p_{K, x_{(2,3)}}^-x_{(2,5)}x_{(3,4)}\\
=\, & p_{K_3, x_{(4,5)}}^-x_{(2,4)}x_{(3,5)}+p_{K_3, x_{(4,5)}}^+x_{(2,5)}x_{(3,4)}
\end{align*}
with $p_{K, x_{(2,3)}}^\pm=p_{K_3, x_{(4,5)}}^\mp$.

\item There is the following exchange relation between $\mathcal{X}(K_1)$ and $\mathcal{X}(K_{11})$.
\begin{align*}
 x_{(1,2)}x_{(3,5)}\,=\, & p_{K_{11}, x_{(1,2)}}^+x_{(1,3)}x_{(2,5)}+p_{K_{11}, x_{(1,2)}}^-x_{(1,5)}x_{(2,3)}\\
 =\, &p_{K_{1}, x_{(3,5)}}^-x_{(1,3)}x_{(2,5)}+p_{K_{1}, x_{(3,5)}}^+x_{(1,5)}x_{(2,3)}
 \end{align*}
with  $p_{K_{11}, x_{(1,2)}}^\pm=p_{K_{1}, x_{(3,5)}}^\mp$.

\item There is the following exchange relation between $\mathcal{X}(K_1)$ and $\mathcal{X}(K_{12})$.
\begin{align*}
 x_{(1,2)}x_{(3,4)}\,=\, & p_{K_{12}, x_{(1,2)}}^+x_{(1,3)}x_{(2,4)}+p_{K_{12}, x_{(1,2)}}^-x_{(1,4)}x_{(2,3)}\\
 =\, & p_{K_{1}, x_{(3,4)}}^-x_{(1,3)}x_{(2,4)}+p_{K_{1}, x_{(3,4)}}^+x_{(1,4)}x_{(2,3)}
 \end{align*}
 with $p_{K_{12}, x_{(1,2)}}^\pm=p_{K_{1}, x_{(3,4)}}^\mp$.

\item There is the following exchange relation between $\mathcal{X}(K_2)$ and $\mathcal{X}(K_{21})$.
\begin{align*}
x_{(1,3)}x_{(2,5)}\,=\, & p_{K_{21}, x_{(1,3)}}^+x_{(1,2)}x_{(3,5)}+p_{K_{21}, x_{(1,3)}}^-x_{(1,5)}x_{(2,3)}\\
=\, & p_{K_{2}, x_{(2,5)}}^-x_{(1,2)}x_{(3,5)}+p_{K_{2}, x_{(2,5)}}^+x_{(1,5)}x_{(2,3)}
\end{align*}
with $p_{K_{21}, x_{(1,3)}}^\pm=p_{K_{2}, x_{(2,5)}}^\mp$.

\item There is the following exchange relation between $\mathcal{X}(K_2)$ and $\mathcal{X}(K_{22})$.
\begin{align*}
x_{(1,3)}x_{(2,4)}\,=\, & p_{K_{22}, x_{(1,3)}}^+x_{(1,2)}x_{(3,4)}+p_{K_{22}, x_{(1,3)}}^-x_{(1,4)}x_{(2,3)}\\
=\, & p_{K_{2}, x_{(2,4)}}^-x_{(1,2)}x_{(3,4)}+p_{K_{2}, x_{(2,4)}}^+x_{(1,4)}x_{(2,3)}
 \end{align*}
 with $p_{K_{22}, x_{(1,3)}}^\pm=p_{K_{2}, x_{(2,4)}}^\mp$.

\item There is the following exchange relation between $\mathcal{X}(K_3)$ and $\mathcal{X}(K_{31})$.
\begin{align*}
x_{(1,4)}x_{(2,3)}\,=\, & p_{K_{3}, x_{(1,4)}}^+x_{(1,2)}x_{(3,4)}+p_{K_{3}, x_{(1,4)}}^-x_{(1,3)}x_{(2,4)}\\
=\, & p_{K_{31}, x_{(2,3)}}^-x_{(1,2)}x_{(3,4)}+p_{K_{31}, x_{(2,3)}}^+x_{(1,3)}x_{(2,4)}
 \end{align*}
 with $p_{K_{31}, x_{(2,3)}}^\pm=p_{K_{3}, x_{(1,4)}}^\mp$.

\item There is the following exchange relation between $\mathcal{X}(K_3)$ and $\mathcal{X}(K_{32})$.
\begin{align*}
x_{(1,5)}x_{(2,3)}\,=\, & p_{K_{3}, x_{(1,5)}}^+x_{(1,2)}x_{(3,5)}+p_{K_{3}, x_{(1,5)}}^-x_{(1,3)}x_{(2,5)}\\
=\, & p_{K_{32}, x_{(2,3)}}^-x_{(1,2)}x_{(3,5)}+p_{K_{32}, x_{(2,3)}}^+x_{(1,3)}x_{(2,5)}
\end{align*}
with $p_{K_{32}, x_{(2,3)}}^\pm=p_{K_{3}, x_{(1,5)}}^\mp$.

\item There is the following exchange relation between $\mathcal{X}(K_{11})$ and $\mathcal{X}(K_{31})$.
\begin{align*}
x_{(1,4)}x_{(3,5)}\,=\, & p_{K_{11}, x_{(1,4)}}^+x_{(1,3)}x_{(4,5)}+p_{K_{11}, x_{(1,4)}}^-x_{(1,5)}x_{(3,4)} \\
=\, & p_{K_{31}, x_{(3,5)}}^-x_{(1,3)}x_{(4,5)}+p_{K_{31}, x_{(3,5)}}^+x_{(1,5)}x_{(3,4)}
\end{align*}
with $p_{K_{11}, x_{(1,4)}}^\pm=p_{K_{31}, x_{(3,5)}}^\mp$.

\item There is the following exchange relation between $\mathcal{X}(K_{11})$ and $\mathcal{X}(K_{22})$.
\begin{align*}
x_{(2,4)}x_{(3,5)}\,=\, & p_{K_{11}, x_{(2,4)}}^+x_{(2,3)}x_{(4,5)}+p_{K_{11}, x_{(2,4)}}^-x_{(2,5)}x_{(3,4)} \\
=\, & p_{K_{22}, x_{(3,5)}}^-x_{(2,3)}x_{(4,5)}+p_{K_{22}, x_{(3,5)}}^+x_{(2,5)}x_{(3,4)}
\end{align*}
with $p_{K_{11}, x_{(2,4)}}^\pm=p_{K_{22}, x_{(3,5)}}^\mp$.

\item There is the following exchange relation between $\mathcal{X}(K_{12})$ and $\mathcal{X}(K_{21})$.
\begin{align*}
x_{(2,5)}x_{(3,4)}\,=\, & p_{K_{12}, x_{(2,5)}}^+x_{(2,3)}x_{(4,5)}+p_{K_{12}, x_{(2,5)}}^-x_{(2,4)}x_{(3,5)}\\
=\, & p_{K_{21}, x_{(3,4)}}^-x_{(2,3)}x_{(4,5)}+p_{K_{21}, x_{(3,4)}}^+x_{(2,4)}x_{(3,5)}
\end{align*}
with $p_{K_{12}, x_{(2,5)}}^\pm=p_{K_{21}, x_{(3,4)}}^\mp$.

\item There is the following exchange relation between $\mathcal{X}(K_{12})$ and $\mathcal{X}(K_{32})$.
\begin{align*}
x_{(1,5)}x_{(3,4)}\,=\, & p_{K_{12}, x_{(1,5)}}^+x_{(1,3)}x_{(4,5)}+p_{K_{12}, x_{(1,5)}}^-x_{(1,4)}x_{(3,5)}\\
=\, & p_{K_{32}, x_{(3,4)}}^-x_{(1,3)}x_{(4,5)}+p_{K_{32}, x_{(3,4)}}^+x_{(1,4)}x_{(3,5)}
\end{align*}
with $p_{K_{12}, x_{(1,5)}}^\pm=p_{K_{32}, x_{(3,4)}}^\mp$.

\item There is the following exchange relation between $\mathcal{X}(K_{21})$ and $\mathcal{X}(K_{31})$.
\begin{align*}
x_{(1,4)}x_{(2,5)}\,=\, & p_{K_{21}, x_{(1,4)}}^+x_{(1,2)}x_{(4,5)}+p_{K_{21}, x_{(1,4)}}^-x_{(1,5)}x_{(2,4)} \\
=\, & p_{K_{31}, x_{(2,5)}}^-x_{(1,2)}x_{(4,5)}+p_{K_{31}, x_{(2,5)}}^+x_{(1,5)}x_{(2,4)}
\end{align*}
with $p_{K_{21}, x_{(1,4)}}^\pm=p_{K_{31}, x_{(2,5)}}^\mp$.

\item There is the following exchange relation between $\mathcal{X}(K_{22})$ and $\mathcal{X}(K_{32})$.
\begin{align*}
x_{(1,5)}x_{(2,4)}\,=\, & p_{K_{22}, x_{(1,5)}}^+x_{(1,2)}x_{(4,5)}+p_{K_{22}, x_{(1,5)}}^-x_{(1,4)}x_{(2,5)}\\
=\, & p_{K_{32}, x_{(2,4)}}^-x_{(1,2)}x_{(4,5)}+p_{K_{32}, x_{(2,4)}}^+x_{(1,4)}x_{(2,5)}
\end{align*}
with $p_{K_{22}, x_{(1,5)}}^\pm=p_{K_{32}, x_{(2,4)}}^\mp$.
\end{enumerate}

\begin{lemma} We have the following relations:
\begin{align}
x_{(1,2)}x_{(4,5)}\,&=\,x_{(1,4)}x_{(2,5)}\,=\,x_{(1,5)}x_{(2,4)}\label{ebr-1}\\
x_{(1,3)}x_{(4,5)}\,&=\,x_{(1,4)}x_{(3,5)}\,=\,x_{(1,5)}x_{(3,4)}\label{ebr-2}\\
x_{(2,3)}x_{(4,5)}\,&=\,x_{(2,4)}x_{(3,5)}\,=\,x_{(2,5)}x_{(3,4)}\label{ebr-3}\\
x_{(1,2)}x_{(3,5)}\,&=\,x_{(1,3)}x_{(2,5)}\,=\,x_{(1,5)}x_{(2,3)}\label{ebr-4}\\
x_{(1,2)}x_{(3,4)}\,&=\,x_{(1,3)}x_{(2,4)}\,=\,x_{(1,4)}x_{(2,3)}\label{ebr-5}.
\end{align}
\end{lemma}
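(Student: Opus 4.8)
The plan is to work inside the single field $\mathbb{F}$ into which all the fields $\mathbb{F}\mathcal{X}(L)$, $L\in[K]$, have been identified, so that all fifteen exchange relations listed above hold \emph{simultaneously} as identities in $\mathbb{F}$. These fifteen relations fall into five families of three, indexed by the five $4$-element subsets of $\{1,2,3,4,5\}$: for each such subset the three relations express the three products coming from the three perfect matchings of its four vertices. The five assertions \eqref{ebr-1}--\eqref{ebr-5} are precisely the statements that, within each family, the three products coincide, so it suffices to treat one family carefully and then note that the computation repeats verbatim for the other four.

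Take the family attached to $\{1,2,4,5\}$ and abbreviate $A=x_{(1,2)}x_{(4,5)}$, $B=x_{(1,4)}x_{(2,5)}$, $C=x_{(1,5)}x_{(2,4)}$. I would use just two of its three exchange relations, the one between $\mathcal{X}(K)$ and $\mathcal{X}(K_1)$ and the one between $\mathcal{X}(K_{21})$ and $\mathcal{X}(K_{31})$, which read
\[
A\,=\,a\,B+a'\,C,\qquad B\,=\,b\,A+b'\,C,
\]
with $a=p^+_{K,x_{(1,2)}}$, $a'=p^-_{K,x_{(1,2)}}$, $b=p^+_{K_{21},x_{(1,4)}}$ and $b'=p^-_{K_{21},x_{(1,4)}}$. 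Substituting the second relation into the first gives $A(1-ab)=(a'+ab')\,C$. Now I invoke the normalization conditions on the two seeds, reading $p^+\oplus p^-=1$ as $a+a'=1$ and $b+b'=1$ in $\mathbb{ZP}$; then $a'+ab'=(1-a)+a(1-b)=1-ab$, so after cancelling the common factor $1-ab$ one obtains $A=C$. Feeding this back gives $B=bA+b'C=(b+b')C=C$, whence $A=B=C$, which is \eqref{ebr-1}.

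The four remaining identities follow by the identical substitution-and-cancellation, using in each case the two relations whose left-hand products are a matching and one of its two "descendants": for \eqref{ebr-2}, the relations between $\mathcal{X}(K),\mathcal{X}(K_2)$ and between $\mathcal{X}(K_{11}),\mathcal{X}(K_{31})$; for \eqref{ebr-3}, those between $\mathcal{X}(K),\mathcal{X}(K_3)$ and between $\mathcal{X}(K_{11}),\mathcal{X}(K_{22})$; for \eqref{ebr-4}, those between $\mathcal{X}(K_1),\mathcal{X}(K_{11})$ and between $\mathcal{X}(K_2),\mathcal{X}(K_{21})$; and for \eqref{ebr-5}, those between $\mathcal{X}(K_1),\mathcal{X}(K_{12})$ and between $\mathcal{X}(K_2),\mathcal{X}(K_{22})$. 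In every family the coefficient combination telescopes to $1-ab$ in exactly the same way, so each collapses to three equal products.

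I expect the main obstacle to be the coefficient bookkeeping rather than the algebra. First, one must check that the normalization $p^+\oplus p^-=1$ may legitimately be used as $p^++p^-=1$ inside the ring $\mathbb{ZP}$, since it is this ring identity that makes the telescoping $a'+ab'=1-ab$ valid; this is where the specific normalization convention of the seeds enters. Second, the cancellation of $1-ab$ requires that this be a nonzero element of $\mathbb{F}$, which uses that $\mathbb{F}$ is a genuine field of rational functions in the cluster variables over $\mathbb{ZP}$. These two points are exactly what force the bistellar exchange relations to collapse to the clean coefficient-free identities \eqref{ebr-1}--\eqref{ebr-5}, in sharp contrast with the single surviving Ptolemy relation of the classical surface case, and they reflect the quotient-by-an-ideal phenomenon flagged in property~(B) of the introductory remark.
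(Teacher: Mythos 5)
Your overall strategy---substitute one exchange relation of a family into another and cancel a common factor---is the same as the paper's, and your grouping of the fifteen relations into five families indexed by the $4$-element subsets of $\{1,\dots,5\}$ matches the paper's remark that each of \eqref{ebr-1}--\eqref{ebr-5} is governed by a triple of the listed relations. The gap is in the coefficient bookkeeping, at exactly the point you flag and then wave through. The normalization $p^+\oplus p^-=1$ is an identity in the semifield $(\mathbb{P},\oplus,\cdot)$; it does \emph{not} yield $p^++p^-=1$ in the group ring $\mathbb{ZP}$, where $+$ is the formal group-ring addition and each element of $\mathbb{P}$ is a basis element. Concretely, the augmentation homomorphism $\mathbb{ZP}\to\mathbb{Z}$ sends every element of $\mathbb{P}$ to $1$, so your claimed telescoping identity $a'+ab'=1-ab$ maps to $2=0$: it is false for \emph{every} semifield $\mathbb{P}$, not merely unverified. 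A secondary problem is the cancellation itself: $1-ab$ is a difference of two elements of $\mathbb{P}$ inside $\mathbb{ZP}$ and can vanish (for instance when $ab=1$, as happens for the trivial semifield), and the fact that $\mathbb{F}$ is a field does not make it nonzero.

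The paper's proof, by contrast, never asserts a ring identity of that shape. It parametrizes $p^{\pm}_{K,x_{(1,2)}}$ as $\frac{u}{1\oplus u}$ and $\frac{1}{1\oplus u}$, performs the same substitution of \eqref{re-2} into \eqref{re-1}, but collects the two coefficients of $x_{(1,5)}x_{(2,4)}$ using the auxiliary addition $\oplus$, arriving at $(1\oplus u\oplus v)x_{(1,2)}x_{(4,5)}=(1\oplus u\oplus v)x_{(1,5)}x_{(2,4)}$; the factor $1\oplus u\oplus v$ lies in $\mathbb{P}$ and is therefore a unit of $\mathbb{ZP}$, so that cancellation is unproblematic. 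It then obtains $A=B$ from \eqref{re-1} and \eqref{re-3} rather than by back-substitution; that structural difference from your argument is harmless. If you want to repair your write-up, you must keep all coefficient arithmetic inside $\mathbb{P}$ with $\oplus$ as the paper does, rather than importing the normalization into $\mathbb{ZP}$ as an additive identity; as written, your derivation of $A=C$ does not go through.
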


\begin{proof} We only prove that~\eqref{ebr-1} holds since the proofs of other cases~\eqref{ebr-2}--\eqref{ebr-5} are similar. Let
\[
p_{K, x_{(1,2)}}^+/p_{K, x_{(1,2)}}^-\,=\,u,\quad p_{K_{21}, x_{(1,4)}}^+/p_{K_{21}, x_{(1,4)}}^-\,=\,v\quad\text{and}\quad p_{K_{22}, x_{(1,5)}}^+/p_{K_{22}, x_{(1,5)}}^-\,=\,w.
\]
Since $p_{K, x_{(1,2)}}^+\oplus p_{K, x_{(1,2)}}^-=1$, one has that $p_{K, x_{(1,2)}}^+=\frac{u}{1\oplus u}$ and $p_{K, x_{(1,2)}}^-=\frac{1}{1\oplus u}$. Similarly, one has that $p_{K_{21}, x_{(1,4)}}^+=\frac{v}{1\oplus v}$ and $p_{K_{21}, x_{(1,4)}}^-=\frac{1}{1\oplus v}$, and $p_{K_{22}, x_{(1,5)}}^+=\frac{w}{1\oplus w}$ and $p_{K_{22}, x_{(1,5)}}^-=\frac{1}{1\oplus w}$. Thus we rewrite (1), (14) and (15) as follows:
\begin{align}
x_{(1,2)}x_{(4,5)}\,&=\,\frac{u}{1\oplus u}x_{(1,4)}x_{(2,5)}+\frac{1}{1\oplus u}x_{(1,5)}x_{(2,4)}\label{re-1}\\
x_{(1,4)}x_{(2,5)}\,&=\,\frac{v}{1\oplus v}x_{(1,2)}x_{(4,5)}+\frac{1}{1\oplus v}x_{(1,5)}x_{(2,4)}\label{re-2}\\
x_{(1,5)}x_{(2,4)}\,&=\,\frac{w}{1\oplus w}x_{(1,2)}x_{(4,5)}+\frac{1}{1\oplus w}x_{(1,4)}x_{(2,5)}.\label{re-3}
\end{align}
From~\eqref{re-1} and~\eqref{re-2} we have that
\begin{align*}
x_{(1,2)}x_{(4,5)}\,=&\, \frac{u}{1\oplus u}\cdot\frac{v}{1\oplus v}x_{(1,2)}x_{(4,5)}+\left(\frac{u}{1\oplus u}\cdot\frac{1}{1\oplus v}\oplus \frac{1}{1\oplus u}\right)x_{(1,5)}x_{(2,4)}\\
=&\, \frac{uv}{(1\oplus u)(1\oplus v)}x_{(1,2)}x_{(4,5)}+\frac{1\oplus u\oplus v}{(1\oplus u)(1\oplus v)}x_{(1,5)}x_{(2,4)}
\end{align*}
so $(1\oplus u\oplus v)x_{(1,2)}x_{(4,5)}=(1\oplus u\oplus v)x_{(1,5)}x_{(2,4)}$. Thus, $x_{(1,2)}x_{(4,5)}=x_{(1,5)}x_{(2,4)}$. In a similar way as above, one can obtain from~\eqref{re-1} and~\eqref{re-3} that $x_{(1,2)}x_{(4,5)}=x_{(1,4)}x_{(2,5)}$. This shows that~\eqref{ebr-1} holds.
\end{proof}

\begin{remark}
The relation~\eqref{ebr-2} is determined by (2), (10) and (13);~\eqref{ebr-3} is determined by (3), (11) and (12);~\eqref{ebr-4} is determined by (4), (6) and (9);~\eqref{ebr-5} is determined by (5), (7) and (8). In addition, it is easy to see that~\eqref{ebr-4} and~\eqref{ebr-5} can be induced from~\eqref{ebr-1}--\eqref{ebr-3}.
\end{remark}

\begin{proposition}
As a subalgebra of $\mathbb{F}$, $\mathcal{A}_{[K]}$ is the quotient algebra of the polynomial algebra
\[
\mathbb{ZP}[x\mid x\in \mathcal{X}_{[K]}]\,=\, \mathbb{ZP}[x_{(1,2)}, x_{(1,3)}, x_{(1,4)}, x_{(1,5)}, x_{(2,3)}, x_{(2,4)}, x_{(2,5)}, x_{(3,4)}, x_{(3,5)}, x_{(4,5)}]
\]
as follows:
\[
\mathcal{A}_{[K]}\,=\,\mathbb{ZP}[x\mid x\in \mathcal{X}_{[K]}]/\mathcal{I},
\]
where $\mathcal{I}$ is the ideal determined by those relations~\eqref{ebr-1}--\eqref{ebr-3}.

As a subalgebra in $\mathbb{F}\mathcal{X}(K)$ with $(\mathcal{X}(K), {\bf p}_K, B(K))$ as an initial bistellar seed, $\mathcal{A}_K\cong \mathcal{A}_{[K]}$ is generated by $x_{(1,2)}, x_{(1,3)}, x_{(1,4)}, x_{(1,5)}, x_{(2,3)}, x_{(2,4)}, x_{(2,5)}, x_{(3,4)}, x_{(3,5)}$ and
\[
\frac{x_{(1,4)}x_{(2,5)}}{x_{(1,2)}}\,=\, \frac{x_{(1,5)}x_{(2,4)}}{x_{(1,2)}}\,=\,\frac{x_{(1,4)}x_{(3,5)}}{x_{(1,3)}}\,=\,\frac{x_{(1,5)}x_{(3,4)}}{x_{(1,3)}}\,=\, \frac{x_{(2,4)}x_{(3,5)}}{x_{(2,3)}}\,=\,\frac{x_{(2,5)}x_{(3,4)}}{x_{(2,3)}}.
\]
\end{proposition}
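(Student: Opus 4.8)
The plan is to exhibit $\mathcal{A}_{[K]}$ as a quotient of the polynomial algebra $P\letbe\mathbb{ZP}[x\mid x\in\mathcal{X}_{[K]}]$ and then to eliminate the single non-initial variable. Since $\mathcal{EX}_{[K]}=\mathcal{X}_{[K]}$, the algebra $\mathcal{A}_{[K]}$ is generated over $\mathbb{ZP}$ by the ten cluster variables, so sending each indeterminate $x_f$ to the corresponding element of $\mathbb{F}$ defines a surjective $\mathbb{ZP}$-algebra homomorphism $\pi\colon P\to\mathcal{A}_{[K]}$. By the preceding Lemma the relations~\eqref{ebr-1}--\eqref{ebr-3} hold in $\mathcal{A}_{[K]}$, so $\mathcal{I}\subseteq\ker\pi$ and $\pi$ factors through a surjection $\bar\pi\colon P/\mathcal{I}\to\mathcal{A}_{[K]}$. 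Everything then reduces to proving that $\bar\pi$ is injective, i.e.\ that~\eqref{ebr-1}--\eqref{ebr-3} already generate all relations among the ten cluster variables.

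For the injectivity I would work inside the concrete model $\mathbb{F}\mathcal{X}(K)$, the field of rational functions in the nine edges $\mathcal{X}(K)=\mathcal{X}_{[K]}\setminus\{x_{(4,5)}\}$ of $K$. These nine variables are algebraically independent, and the tenth cluster variable $x_{(4,5)}$ is the one adjoined by the mutations $K\to K_1,K_2,K_3$; using~\eqref{ebr-1} in the form $x_{(1,2)}x_{(4,5)}=x_{(1,4)}x_{(2,5)}$ it becomes the explicit rational function $x_{(1,4)}x_{(2,5)}/x_{(1,2)}$. The key step is a normal-form argument: order the ten indeterminates with $x_{(4,5)}$ last and use the six binomials underlying~\eqref{ebr-1}--\eqref{ebr-3} as rewriting rules that eliminate every occurrence of $x_{(4,5)}$ in favour of the nine remaining variables. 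I would verify that this rewriting is confluent, so that each class in $P/\mathcal{I}$ has a representative not involving $x_{(4,5)}$; since the images of such representatives are honest polynomials in the nine algebraically independent edges, distinct normal forms have distinct images and $\bar\pi$ is injective. This gives the presentation $\mathcal{A}_{[K]}=P/\mathcal{I}$.

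The second assertion is then essentially a translation. By the Remark following Definition~\ref{alg1} the algebra $\mathcal{A}_{[K]}$ reduces, via the field identifications of Corollary~\ref{field-1}, to a subalgebra of $\mathbb{F}\mathcal{X}(K)$ with $\Sigma_K$ as initial seed, and this is exactly $\mathcal{A}_K$ of Definition~\ref{alg2}; since $[K]=[K]$, Theorem~\ref{main1} gives $\mathcal{A}_K\cong\mathcal{A}_{[K]}$. Under this reduction the nine edges of $K$ are the initial cluster variables and $x_{(4,5)}$ becomes $x_{(1,4)}x_{(2,5)}/x_{(1,2)}$; applying the remaining equalities in~\eqref{ebr-1}--\eqref{ebr-3} shows that this single element coincides with each of the six displayed fractions. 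Hence $\mathcal{A}_K$ is generated by $x_{(1,2)},\dots,x_{(3,5)}$ together with that common value, as claimed.

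The main obstacle is the injectivity of $\bar\pi$, and more precisely the gap between the ideal \emph{generated} by~\eqref{ebr-1}--\eqref{ebr-3} and the full ideal of relations. Because $\mathcal{A}_{[K]}$ is a domain, the derived relations~\eqref{ebr-4} and~\eqref{ebr-5} are obtained from~\eqref{ebr-1}--\eqref{ebr-3} only after cancelling nonzero factors: for instance $x_{(1,5)}\bigl(x_{(1,2)}x_{(3,4)}-x_{(1,3)}x_{(2,4)}\bigr)$ lies in $\mathcal{I}$, whereas extracting $x_{(1,2)}x_{(3,4)}-x_{(1,3)}x_{(2,4)}$ itself requires cancelling $x_{(1,5)}$. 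One must therefore either read $\mathcal{I}$ as the associated saturated (prime) ideal of relations, or verify directly that $P/\mathcal{I}$ is already reduced. I expect this point to be settled precisely by the confluence check in the elimination of $x_{(4,5)}$, and it is the step deserving the most care.
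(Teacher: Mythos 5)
The paper states this proposition with no proof at all --- it is presented as an immediate consequence of the preceding lemma (which establishes \eqref{ebr-1}--\eqref{ebr-5}) and of the remark that \eqref{ebr-4} and \eqref{ebr-5} ``can be induced from'' \eqref{ebr-1}--\eqref{ebr-3}. Your outline (surjection $\pi\colon P\to\mathcal{A}_{[K]}$, then injectivity of $\bar\pi$) is the right skeleton for an actual proof, and you have correctly put your finger on the one genuinely delicate point. But your proposed resolution of that point fails, and in fact confirms the problem rather than removing it.

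Concretely: grade $P$ by $\mathbb{Z}^5$ with $\deg x_{(i,j)}=e_i+e_j$. The six binomials underlying \eqref{ebr-1}--\eqref{ebr-3} are homogeneous for this grading, so two monomials are congruent modulo $\mathcal{I}$ only if they are linked by a chain of substitutions each of which replaces one of the nine monomials occurring in those six binomials by its partner. The degree-$2$ monomial $x_{(1,2)}x_{(3,5)}$ is not one of those nine monomials, and neither is $x_{(1,3)}x_{(2,5)}$; hence both are irreducible normal forms and $x_{(1,2)}x_{(3,5)}-x_{(1,3)}x_{(2,5)}\notin\mathcal{I}$. Yet this element lies in $\ker\pi$, because \eqref{ebr-4} holds in $\mathcal{A}_{[K]}\subset\mathbb{F}$. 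So $\bar\pi$ is \emph{not} injective, and the confluence check you defer to would exhibit two distinct normal forms with equal image rather than establish uniqueness. (A second, smaller defect: the rules can only remove $x_{(4,5)}$ from a monomial that also contains one of $x_{(1,2)},x_{(1,3)},x_{(2,3)}$, so monomials such as $x_{(4,5)}x_{(1,4)}$ admit no $x_{(4,5)}$-free representative at all.) The upshot is that the first assertion is true only if $\mathcal{I}$ is read as the saturated ideal of all relations, equivalently if the binomials of \eqref{ebr-4}--\eqref{ebr-5} are adjoined to the generating set --- exactly the alternative you raise in your last paragraph, which is the correct reading and should be asserted outright rather than left to a rewriting argument that cannot deliver it. Finally, be careful with your ``concrete model'': if the nine variables of $\mathcal{X}(K)$ were genuinely algebraically independent in the ambient field, then $x_{(1,4)}x_{(2,5)}\neq x_{(1,5)}x_{(2,4)}$ and the six displayed fractions could not coincide; the lemma forces nontrivial relations among the initial cluster variables themselves, so the identification of $\mathbb{F}$ with $\mathbb{F}\mathcal{X}(K)$ via Corollary~\ref{field-1} must itself be treated as passing to a quotient, which is precisely the point of Remark (B) in the introduction.
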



\section{Algebras as PL Invariants}

In this section, our purpose is to construct PL invariants by using bistellar cluster algebras.  We shall work in the setting of PL manifolds. Let $\mathbb{PL}_n$ be the set consisting of all $n$-dimensional PL manifolds in $\mathbb{TM}_n$. We shall mainly be concerned with the case in which $n$ is even, especially for $n=4$.

Write $n=2h$  and take a PL manifold $K\in \mathbb{PL}_n$. Let $\mathfrak{S}_K$ be the set consisting of all PL manifolds that are PL homeomorphic to $K$. Then we can write
\[
\mathfrak{S}_K\,=\,\bigcup_{m\geq m_0}\mathfrak{S}_K^m,
\]
where $m_0$ is the minimal number of vertex numbers of all PL manifolds in $\mathfrak{S}_K$, and $\mathfrak{S}_K^m$ consists of those PL manifolds with $m$~vertices in $\mathfrak{S}_K$.

Consider the equivalence relation $\sim$ on $\mathfrak{S}_K$ as follows: for $K_1, K_2\in \mathfrak{S}_K$, $K_1\sim K_2$ if and only if one of $K_1$ and $K_2$ can be transformed into the other by only doing a finite sequence of bistellar $h$-moves (including the empty sequence). By $\widetilde{\mathfrak{S}}_K$ we denote the quotient set $\mathfrak{S}_K/\sim$. Since the bistellar $h$-moves do not change the number of vertices,
the equivalence relation $\sim$ is graded on $\mathfrak{S}_K=\bigcup_{m\geq m_0}\mathfrak{S}_K^m$, so
\[
\widetilde{\mathfrak{S}}_K\,=\,\bigcup_{m\geq m_0}\widetilde{\mathfrak{S}}_K^m.
\]
For a PL manifold $L$ in $\mathfrak{S}_K$, the equivalence class of $L$ in $\widetilde{\mathfrak{S}}_K$ is denoted by $[L]$.

We define a binary relation $\preceq$ on $\widetilde{\mathfrak{S}}_K$ as follows: for two elements $[K_1], [K_2]\in \widetilde{\mathfrak{S}}_K$ we say
\begin{align*}
[K_1]\preceq [K_2]\quad\Longleftrightarrow\quad &\text{$K_2$ is obtained from  $K_1$ by  doing a finite sequence of  bistellar} \\
&\ \text{ moves ${\bm}_{\alpha_1},\dots, {\bm}_{\alpha_l}$, where $\dim \alpha_i\geq  h$}.
\end{align*}
Note that for $[K']\in \widetilde{\mathfrak{S}}_K$, if $|[K']|=1$, then we use the convention that $[K']\preceq [K']$.

\begin{lemma}\label{dir-set}
When $h=1, 2$, $(\widetilde{\mathfrak{S}}_K, \preceq)$ forms a directed set.
\end{lemma}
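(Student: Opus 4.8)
The plan is to first check that $\preceq$ is a well-defined preorder on $\widetilde{\mathfrak{S}}_K$ and then reduce directedness to a single confluence statement. Well-definedness on $\sim$-classes is immediate once one observes that a middle-dimensional move is itself a move with $\dim\alpha=h\ge h$: replacing a representative of $[K_1]$ or $[K_2]$ by a $\sim$-equivalent one only prepends or appends $h$-moves, which already satisfy the constraint $\dim\alpha_i\ge h$. Reflexivity comes from the empty sequence (or the stated convention), and transitivity from concatenating two admissible sequences, the concatenation again using only moves with $\dim\alpha_i\ge h$. With this in hand, ``directed'' amounts to the single assertion that any two classes $[K_1],[K_2]$, whose representatives are PL homeomorphic since they lie in $\mathfrak{S}_K$, admit a common $\preceq$-upper bound.

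I would organize everything around the $g$-vector. Writing a move as type $k=n-\dim\alpha$, admissibility means $k\le h$; by Pachner's Theorem~\ref{pachner1} such a move either leaves $g(\cdot)$ unchanged (exactly when $k=h$, i.e.\ when it stays inside one $\sim$-class) or raises the single entry $g_{k+1}$ by $1$ (when $k<h$). Thus on $\widetilde{\mathfrak{S}}_K$ the relation $\preceq$ is graded: an admissible move is either trivial on classes or a single ``up-cover'' raising the total degree $\sum_{i\ge1}g_i$ by one, while the inverse of an admissible move of type $k<h$ is a move of type $n-k>h$ lowering the same degree. Consequently any sequence of bistellar moves relating two PL homeomorphic representatives (Theorem~\ref{home}) projects, at the level of classes, to a finite zigzag of up-covers and down-covers in this graded poset.

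For $h=1$ the argument is clean and self-contained. For surfaces, any two triangulations with the same number of vertices are connected by $1$-moves, as recalled in the Introduction; hence each graded piece $\widetilde{\mathfrak{S}}_K^m$ is a single class, and one $0$-move (adding a vertex) witnesses $[K^{(m)}]\preceq[K^{(m+1)}]$. Therefore $(\widetilde{\mathfrak{S}}_K,\preceq)$ is the chain $m_0\le m_0+1\le\cdots$, which is totally ordered and so directed.

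The case $h=2$ is the real content, since middle $2$-moves preserve the whole $f$-vector and $\widetilde{\mathfrak{S}}_K^m$ is genuinely refined by $(g_1,g_2)$. Here I would prove the confluence property (D): if $[B]\preceq[A]$ and $[B]\preceq[C]$, then $[A]$ and $[C]$ have a common upper bound. Granting (D), a short induction on the length of the class-level zigzag gives directedness: each down-step is absorbed into an existing upper bound, while each up-step is closed off by applying (D) to the common lower bound $[Z_{i-1}]$. To obtain (D) I would prove the single-step diamond property for up-covers — two admissible moves issuing from the same complex can be completed to a common complex by further admissible moves — whence (D) follows by the usual Church--Rosser tiling of the finite region between two up-paths from $[B]$. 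When the supports $\Lambda_a$ and $\Lambda_b$ are disjoint the moves commute and the diamond closes in one step. The hard part, and the only place the hypothesis $h\le 2$ is genuinely used, is the finite list of overlapping configurations: one must verify that the two local pictures can be reconciled using moves of type $\le 2$ only, i.e.\ that no down-move of type $3$ or $4$ is forced. I expect this dimension-sensitive local confluence to be the main obstacle, to be settled by direct combinatorial inspection of how $\Lambda_a$ and $\Lambda_b$ can share simplices inside a single $4$-manifold, controlled by the middle-face bookkeeping of Remark~\ref{loc-equ}; an equivalent route is to exhibit a common subdivision of the two representatives and check that it is reachable from each by type-$\le h$ moves, which is again exactly where $n\le 4$ enters.
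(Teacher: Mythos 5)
Your high-level strategy is essentially the paper's: both reduce directedness to a local statement about a single ``bad'' adjacency of moves, dispose of the disjoint-support case by commutativity, and isolate the overlapping-support configurations as the crux. (Your single-step diamond for two admissible moves issuing from a common complex is the same statement as the paper's exchange lemma: the paper phrases it as ``a move with $\dim\alpha<h$ followed by a move with $\dim\alpha'\ge h$ can be rewritten as admissible moves followed by inadmissible ones,'' which is precisely joinability of the two admissible moves ${\bm}_\beta$ and ${\bm}_{\alpha'}$ issuing from ${\bm}_\alpha L$.) Your $h=1$ argument and your setup of $\preceq$ as a preorder also match the paper.

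The genuine gap is that you never prove the local confluence statement for $h=2$; you only assert that it is ``to be settled by direct combinatorial inspection'' of how the two supports can overlap. That inspection is not a routine check --- it is the entire content of the lemma and occupies several pages of the paper's proof. For each configuration, classified by $\dim\alpha\in\{0,1\}$, by $\dim\alpha'$, and by the size of $\alpha'\cap(\alpha\cup\beta)$, the paper exhibits an explicit replacement sequence; for instance, with $\alpha=(1)$, $\beta=(2,3,4,5,6)$, $\alpha'=(2,3,4,5)$ one gets ${\bm}_{\alpha'}{\bm}_\alpha L={\bm}_{(1)}{\bm}_{(1,4)}{\bm}_{(1,3,4)}{\bm}_{(1,2,3,4)}{\bm}_{(2,3,4,5)}L$, and each step requires verifying a link condition in the intermediate complex. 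Nothing in your proposal produces these sequences or verifies that moves of type $\le h$ suffice, so the claim that ``no down-move of type $3$ or $4$ is forced'' is exactly what remains unproved. A secondary concern: your appeal to Church--Rosser tiling needs justification, because the single-step diamonds close only after several moves (and, as the example above shows, the rewriting can \emph{increase} the number of descending moves), while the ascending relation is not terminating (bistellar $0$-moves can be applied indefinitely), so Newman's lemma does not apply off the shelf; some explicit measure or induction scheme must be supplied to make the global tiling terminate. The paper's sequence-reordering formulation faces the same issue, but since you are presenting the tiling as the mechanism that finishes the proof, you would need to address it.
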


\begin{proof}
When $h=1$, let $[L]\in \widetilde{\mathfrak{S}}_K$. Clearly, $[L]\preceq [L]$. It is well-known that for any $m\geq m_0$ and  any two $L_1, L_2\in \mathfrak{S}_K^m$, that $L_1$ can be obtained from $L_2$ by performing a finite sequence of bistellar 1-moves (this can also be seen by simplifying the case for $h=2$ below). 
So $\widetilde{\mathfrak{S}}^m_K$ contains exactly one element. It then follows easily from this that $(\widetilde{\mathfrak{S}}_K, \preceq)$ forms a directed set.

When $h=2$, let $[L]\in \widetilde{\mathfrak{S}}_K$. First, clearly one has that $[L]\preceq [L]$. Let $[L_1], [L_2], [L_3]\in \widetilde{\mathfrak{S}}_K$ be such that $[L_1]\preceq [L_2]$ and $[L_2]\preceq [L_3]$. Then  $L_2$ is obtained from $L_1$ by performing finitely many bistellar $0$-moves, $1$-moves or $2$-moves, and $L_3$ is obtained from $L_2$ similarly. So $L_3$ is obtained from $L_1$ by performing finitely many bistellar $0$-moves, $1$-moves or $2$-moves. Thus, $[L_1]\preceq [L_3]$. It remains to show that for any two $[L_1], [L_2]\in \widetilde{\mathfrak{S}}_K$, there is an $[L]\in \widetilde{\mathfrak{S}}_K$ such that $[L_1]\preceq [L]$ and $[L_2]\preceq [L]$.

Clearly if either $L_1$ is obtained from  $L_2$ by performing finitely many bistellar $0$-moves, $1$-moves or $2$-moves or  $L_2$ is obtained from  $L_1$ in a similar fashion, then we can choose $[L]$ as $[L_1]$ or $[L_2]$ such that $[L_1]\preceq [L]$ and $[L_2]\preceq [L]$. Generally, by Pachner's Theorem~\cite{pa2}, there always exists a finite sequence of bistellar pairs $(\alpha_1, \beta_1),\dots, (\alpha_l, \beta_l)$ such that
  \begin{equation}\label{move sequence}
  {\bm}_{\alpha_l}\cdots{\bm}_{\alpha_1}L_1\,=\,L_2,
  \end{equation}
  where $0\leq \dim \alpha_i\leq 4$.  Such a sequence $(\alpha_1, \beta_1),\dots, (\alpha_l, \beta_l)$ is not unique in general. To complete the proof, it suffices to show that there exists such a sequence $(\alpha_1, \beta_1), \dots, (\alpha_l, \beta_l)$,  satisfying~\eqref{move sequence} and the property that  there is some $1\leq u\leq l$ such that for $1\leq i\leq u$, $\dim \alpha_i\geq 2$ and for $u<i\leq l$, $\dim \alpha_i<2$, so that we can obtain $L={\bm}_{\alpha_u}\cdots{\bm}_{\alpha_1}L_1={\bm}_{\beta_{u+1}}\cdots{\bm}_{\beta_l}L_2$ as desired.

For this we let $L\in [K]$, and then it suffices to show that for any two bistellar pairs $(\alpha, \beta)$ with $\dim \alpha<2$ and $(\alpha', \beta')$ with $\dim \alpha'\geq 2$, if  we can perform the operation ${\bm}_{\alpha'}{\bm}_\alpha$ on $L$, then  ${\bm}_{\alpha'}{\bm}_\alpha L$ can be written as ${\bm}_{\alpha_l}\cdots{\bm}_{\alpha_u}\cdots{\bm}_{\alpha_1}L$ such that for $1\leq i\leq u$, $\dim \alpha_i\geq 2$ and for $u<i\leq l$, $\dim \alpha_i<2$. We divide this into two cases: (I) $\dim \alpha=0$; (II) $\dim \alpha=1$.

\begin{enumerate}
\item[{\bf (I)}] Without loss of generality, we assume that $\alpha=(1)$ and $\beta=(2,3,4,5,6)$, so
  $\alpha\ast\partial \beta\subset L$. Then $\beta=\partial \alpha\ast\beta\subset {\bf bm}_\alpha L$.

  If $\dim \alpha'=4$ and $\alpha'=\beta$ (i.e., $|\alpha'\cap\beta|=5$), then ${\bf bm}_{\alpha'}{\bf bm}_\alpha L=L$.
  In this case, first we do a bistellar 0-move at $(1,2,3,4,5)$ on $L$, so that $(1,2,3,4,5)$ becomes a missing face of ${\bf bm}_{(1,2,3,4,5)}L$, and let $(6')\in {\bf bm}_{(1,2,3,4,5)}L$ such that $\Link_L(1,2,3,4,5)=\partial (6')$. Then
  we do the inverse operation of  ${\bf bm}_{(1,2,3,4,5)}$, so that
  \[
  {\bf bm}_{\alpha'}{\bf bm}_\alpha L\,=\,{\bf bm}_{(2,3,4,5,6)}{\bf bm}_{(1)} L\,=\,L\,=\,{\bf bm}_{(6')}{\bf bm}_{(1,2,3,4,5)}L.
  \]

 If $\dim \alpha'=4$ and  $|\alpha'\cap\beta|\leq 4$, without loss of generality,  we may assume that $\alpha'=(2,3,4,5,6')$ or $(2,3,4,5',6')$ or $(2,3, 4',5',6')$ or $(2, 3', 4', 5', 6')$ or $(2',3',4',5',6')$. Clearly, $\alpha'$ is a simplex of $L$ so
 $\Link_L\alpha'$ is the boundary of a ghost vertex $\beta'=v$ (i.e., $\beta'\not\in L$). Thus
 $\alpha\in {\bf bm}_{\alpha'}L$ and $\alpha'\in {\bf bm}_{\alpha}L$.
It is easy to check that
 \[
 {\bf bm}_{\alpha'}{\bf bm}_\alpha L\,=\,{\bf bm}_{\alpha}{\bf bm}_{\alpha'} L
 \]
as required.

    If $\dim \alpha'=3$ and  $|\alpha'\cap\beta|=4$, without loss of generality,  assume that $\alpha'=(2,3,4,5)$, then we can write
 $\beta'=(6,6')\not\in  {\bf bm}_\alpha L$. We easily check that ${\bf bm}_{\alpha'}{\bf bm}_\alpha L$ must contain the following four 4-simplices $(2,3,4,6,6')$, $(2,3,5,6,6')$, $(2,4,5,6,6')$ and $(3,4,5,6,6')$. Now let us perform the following bistellar moves on $L$.
 \begin{enumerate}
\item
Firstly we  see that $(1,6')\not\in L$ and $\Link_L(2,3,4,5)=\partial (1,6')$, so
 we can do a bistellar 1-move at $(2,3,4,5)$ on $L$.
 \item
 Secondly we have that $(6,6')\not\in {\bf bm}_{(2,3,4,5)} L$
 and $\Link_{{\bf bm}_{(2,3,4,5)}L} (1,2,3,4)=\partial (6,6')$, so we can do a bistellar 1-move at $(1,2,3,4)$ on
 ${\bf bm}_{(2,3,4,5)} L$, and in particular, we have $(2,3,4,6,6')\in {\bf bm}_{(1,2,3,4)}{\bf bm}_{(2,3,4,5)} L$.
 \item
  Thirdly we have that $(5,6,6')\not\in {\bf bm}_{(1,2,3,4)}{\bf bm}_{(2,3,4,5)} L$ and
 \[
 \Link_{{\bf bm}_{(1,2,3,4)}{\bf bm}_{(2,3,4,5)} L} (1,3,4)\,=\,\partial (5,6,6').
 \]
  Then we  perform a bistellar 2-move at $(1, 3,4)$ on ${\bf bm}_{(1,2,3,4)}{\bf bm}_{(2,3,4,5)} L$, so that
  $(3,4,5,6,6')\in {\bf bm}_{(1,3,4)}{\bf bm}_{(1,2,3,4)}{\bf bm}_{(2,3,4,5)} L$.
 \item
  Next it is easy to see that $(2, 5, 6,6')\not\in {\bf bm}_{(1,3,4)}{\bf bm}_{(1,2,3,4)}{\bf bm}_{(2,3,4,5)} L$ and
 \[
 \Link_{{\bf bm}_{(1,3,4)}{\bf bm}_{(1,2,3,4)}{\bf bm}_{(2,3,4,5)} L}(1,4)\,=\,\partial (2,5,6,6').
 \]
 After we perform a bistellar 3-move at $(1,4)$ on ${\bf bm}_{(1,3,4)}{\bf bm}_{(1,2,3,4)}{\bf bm}_{(2,3,4,5)} L$,
 we have
 \[
 (2,4,5,6,6')\,\in\, {\bf bm}_{(1,4)} {\bf bm}_{(1,3,4)}{\bf bm}_{(1,2,3,4)}{\bf bm}_{(2,3,4,5)} L.
 \]
 \item
 Finally we can check that
 \[
 \Link_{{\bf bm}_{(1,4)}{\bf bm}_{(1,3,4)}{\bf bm}_{(1,2,3,4)}{\bf bm}_{(2,3,4,5)} L}(1)\,=\,\partial (2,3,5,6,6')
 \]
 but $(2,3,5,6,6')\not\in {\bf bm}_{(1,4)}{\bf bm}_{(1,3,4)}{\bf bm}_{(1,2,3,4)}{\bf bm}_{(2,3,4,5)} L$. After performing a bistellar 4-move at $(1)$ on ${\bf bm}_{(1,4)}{\bf bm}_{(1,3,4)}{\bf bm}_{(1,2,3,4)}{\bf bm}_{(2,3,4,5)} L$, we obtain that
 \[
 (2,3,5,6,6')\in {\bf bm}_{(1)}{\bf bm}_{(1,4)}{\bf bm}_{(1,3,4)}{\bf bm}_{(1,2,3,4)}{\bf bm}_{(2,3,4,5)} L.
 \]
 \end{enumerate}
  Moreover, an easy argument shows
 \[
 {\bf bm}_{\alpha'}{\bf bm}_\alpha L\,=\,{\bf bm}_{(1)}{\bf bm}_{(1,4)}{\bf bm}_{(1,3,4)}{\bf bm}_{(1,2,3,4)}{\bf bm}_{(2,3,4,5)} L
 \]
 as required.

     If $\dim \alpha'=3$ and  $|\alpha'\cap\beta|\leq 3$, without loss of generality, we may assume that $\alpha'=(2,3,4, 5')$ or $(2,3, 4',5')$
 or $(2,3',4',5')$ or $(2',3',4',5')$. Similarly to the argument of the case $\dim \alpha'=4$ and  $|\alpha'\cap\beta|\leq 4$,
 we have the following equation
  \[
  {\bf bm}_{\alpha'}{\bf bm}_\alpha L\,=\,{\bf bm}_{\alpha}{\bf bm}_{\alpha'} L
  \]
as required.

As in the above case, if $\dim \alpha'=2$ and $|\alpha'\cap \beta|\leq 2$, then we easily see that
\[
{\bf bm}_{\alpha'}{\bf bm}_\alpha L\,=\,{\bf bm}_{\alpha}{\bf bm}_{\alpha'} L.
\]
If $\dim \alpha'=2$ and $|\alpha'\cap \beta|=3$, we may assume that $\alpha'=(2,3,4)$. We see that $\Link_L
\alpha'$ is a square formed by four edges $(1,5)$, $(1,6)$, $(5, 6')$ and $(6,6')$. This implies that $\beta'=(5,6,6')$, which is a 2-simplex of ${\bf bm}_{\alpha'}{\bf bm}_\alpha L$ but not a 2-simplex of $L$ and ${\bf bm}_\alpha L$. A direct check gives that  ${\bf bm}_{\alpha'}{\bf bm}_\alpha L$ must contain the following three 4-simplices $(2,3,5,6,6')$, $(2,4,5,6,6')$ and  $(3,4,5,6,6')$. Now for our purpose, we perform the following bistellar moves on $L$:
\begin{enumerate}
\item
Firstly we perform a bistellar 1-move at $(2,3,4,5)$ on $L$ since  $(1,6')\not\in L$ and $\Link_L(2,3,4,5)=\partial (1,6')$.
 \item
 Secondly we can perform a bistellar 2-move at $(2,3,4)$ on
 ${\bf bm}_{(2,3,4,5)} L$. This is because   $(1,6,6')\not\in {\bf bm}_{(2,3,4,5)} L$ and $\Link_{{\bf bm}_{(2,3,4,5)}L}(2,3,4)=\partial (1,6,6')$.
 \item
 Thirdly we see that $(5,6,6')\not\in {\bf bm}_{(2,3,4)}{\bf bm}_{(2,3,4,5)} L$ and
 \[
 \Link_{{\bf bm}_{(2,3,4)}{\bf bm}_{(2,3,4,5)} L} (1,3,4)\,=\,\partial (5,6,6').
 \]
  Thus we can perform a bistellar 2-move at $(1, 3,4)$ on ${\bf bm}_{(2,3,4)}{\bf bm}_{(2,3,4,5)} L$, so that
  $(3,4,5,6,6')\in {\bf bm}_{(1,3,4)}{\bf bm}_{(2,3,4)}{\bf bm}_{(2,3,4,5)} L$.
 \item
  Next we can check that $(2, 5, 6,6')\not\in {\bf bm}_{(1,3,4)}{\bf bm}_{(2,3,4)}{\bf bm}_{(2,3,4,5)} L$ and
 \[
 \Link_{{\bf bm}_{(1,3,4)}{\bf bm}_{(2,3,4)}{\bf bm}_{(2,3,4,5)} L}(1,4)\,=\,\partial (2,5,6,6').
 \]
 After we perform a bistellar 3-move at $(1,4)$ on ${\bf bm}_{(1,3,4)}{\bf bm}_{(2,3,4)}{\bf bm}_{(2,3,4,5)} L$, we have
 \[
 (2,4,5,6,6')\,\in\, {\bf bm}_{(1,4)} {\bf bm}_{(1,3,4)}{\bf bm}_{(2,3,4)}{\bf bm}_{(2,3,4,5)} L.
 \]
 \item
 Finally we can see that $\Link_{{\bf bm}_{(1,4)}{\bf bm}_{(1,3,4)}{\bf bm}_{(2,3,4)}{\bf bm}_{(2,3,4,5)} L}(1)=\partial (2,3,5,6,6')$
 but
 \[
 (2,3,5,6,6')\not\in {\bf bm}_{(1,4)}{\bf bm}_{(1,3,4)}{\bf bm}_{(2,3,4)}{\bf bm}_{(2,3,4,5)} L.
 \]
 Thus, after performing a bistellar 4-move at $(1)$ on
 \[
 {\bf bm}_{(1,4)}{\bf bm}_{(1,3,4)}{\bf bm}_{(2,3,4)}{\bf bm}_{(2,3,4,5)} L,
 \]
 we conclude that
 \[
 (2,3,5,6,6')\,\in\, {\bf bm}_{(1)}{\bf bm}_{(1,4)}{\bf bm}_{(1,3,4)}{\bf bm}_{(2,3,4)}{\bf bm}_{(2,3,4,5)} L.
 \]
 \end{enumerate}
  Together with the above arguments, we may obtain the required equation
 \[
 {\bf bm}_{\alpha'}{\bf bm}_\alpha L\,=\,{\bf bm}_{(1)}{\bf bm}_{(1,4)}{\bf bm}_{(1,3,4)}{\bf bm}_{(2,3,4)}{\bf bm}_{(2,3,4,5)}. L
 \]

\item[\bf (II)] Without loss of generality, we assume that $\alpha=(1,2)$ and $\beta=(3,4,5,6)$.

If $\dim \alpha'=4$ and $|\alpha'\cap(\alpha\cup \beta)|=5$, then $\alpha'\in {\bf H}_\beta=\{(1,3,4,5,6), (2,3,4,5,6)\}$. Without loss of generality, we may assume that $\alpha'=(1,2,3,4,5)$. Let $\beta'=(7)$, which is a ghost vertex of $L$ and ${\bf bm}_\alpha L$. Then ${\bf bm}_{\alpha'}{\bf bm}_\alpha L$ must contain the following six 4-simplices $(2,3,4,5,6)$, $(3,4,5,6,7)$, $(1,3,4, 5,7)$, $(1,3,4,6,7)$, $(1,3,5,6,7)$ and $(1,4,5,6,7)$.
   In a similar way to case~(I),  we perform the following bistellar moves on $L$:
\begin{enumerate}
\item
Firstly we perform a bistellar $0$-move at $(1,2,3,4,5)$ on $L$ since  $\Link_L(1,2,3,4,5)= \partial (7)$. Then we easily see that $(1,3,4,5,7)\in {\bf bm}_{(1,2,3,4,5)}L$.
 \item
 Secondly we can perform a bistellar 1-move at $(1,2,3,4)$ on
 ${\bf bm}_{(1,2,3,4,5)} L$ since  $(6,7)\not\in {\bf bm}_{(1,2,3,4,5)} L$ and $\Link_{{\bf bm}_{(1,2, 3,4,5)}L}(1,2,3,4)=\partial (6,7)$, so that we can obtain that
 \[
 (1, 3,4,6,7)\,\in\, {\bf bm}_{(1,2,3,4)} {\bf bm}_{(1,2,3,4,5)} L.
 \]
 \item
  Thirdly we see that $(5,6,7)\not\in {\bf bm}_{(1,2,3,4)}{\bf bm}_{(1,2,3,4,5)} L$ and
 \[
 \Link_{{\bf bm}_{(1,2,3,4)}{\bf bm}_{(1,2,3,4,5)} L} (1,2,3)\,=\,\partial (5,6,7).
 \]
  Thus we can perform a bistellar 2-move at $(1, 2,3)$ on ${\bf bm}_{(1,2,3,4)}{\bf bm}_{(1,2,3,4,5)} L$, so that
  $(1,3,5,6,7)\in {\bf bm}_{(1,2,3)}{\bf bm}_{(1,2,3,4)}{\bf bm}_{(1,2,3,4,5)} L$.
 \item
  Next we can check that $(4,5,6,7)\not\in {\bf bm}_{(1,2,3)}{\bf bm}_{(1,2,3,4)}{\bf bm}_{(1,2,3,4,5)} L$ and
 \[
 \Link_{{\bf bm}_{(1,2,3)}{\bf bm}_{(1,2,3,4)}{\bf bm}_{(1,2,3,4,5)} L}(1,2)\,=\,\partial (4,5,6,7),
 \]
  we can perform a bistellar 3-move at $(1,2)$ on ${\bf bm}_{(1,2,3)}{\bf bm}_{(1,2,3,4)}{\bf bm}_{(1,2,3,4,5)} L$, and then
 $(1,4,5,6,7)\in {\bf bm}_{(1,2)} {\bf bm}_{(1,2,3)}{\bf bm}_{(1,2,3,4)}{\bf bm}_{(1,2,3,4,5)} L$.
 \item
 Finally we can see that $\Link_{{\bf bm}_{(1,2)}{\bf bm}_{(1,2,3)}{\bf bm}_{(1,2,3,4)}{\bf bm}_{(1,2,3,4,5)} L}(2,7)=\partial (3,4,5,6)$
 but $(3,4,5,6)\not\in {\bf bm}_{(1,2)}{\bf bm}_{(1,2,3)}{\bf bm}_{(1,2,3,4)}{\bf bm}_{(1,2,3,4,5)} L$. Thus, after performing a bistellar 3-move at $(2,7)$ on ${\bf bm}_{(1,2)}{\bf bm}_{(1,2,3)}{\bf bm}_{(1,2,3,4)}{\bf bm}_{(1,2,3,4,5)} L$, we conclude that
 \[
 (2,3,4,5,6), (3,4,5,6,7)\,\in\, {\bf bm}_{(2,7)}{\bf bm}_{(1,2)}{\bf bm}_{(1,2,3)}{\bf bm}_{(1,2,3,4)}{\bf bm}_{(1,2,3,4,5)} L.
 \]
 \end{enumerate}
  The above arguments give
  \[
  {\bf bm}_{\alpha'}{\bf bm}_\alpha L\,=\,{\bf bm}_{(2,7)}{\bf bm}_{(1,2)}{\bf bm}_{(1,2,3)}{\bf bm}_{(1,2,3,4)}{\bf bm}_{(1,2,3,4,5)} L.
  \]

 If $\dim \alpha'=4$ and $|\alpha'\cap(\alpha\cup \beta)|\leq 4$, then an easy observation shows that
 \[
 {\bf bm}_{\alpha'}{\bf bm}_\alpha L\,=\, {\bf bm}_{\alpha}{\bf bm}_{\alpha'} L.
 \]
Actually, we can also prove easily that  if $\dim \alpha'=i$ and $|\alpha'\cap(\alpha\cup \beta)|\leq i$ for $i=2,3$, then
 \[
 {\bf bm}_{\alpha'}{\bf bm}_\alpha L\,=\, {\bf bm}_{\alpha}{\bf bm}_{\alpha'} L.
 \]
Thus it remains to show the case in which   $\dim \alpha'=i$ and $|\alpha'\cap(\alpha\cup \beta)|=i+1$ for $i=2,3$.

When $i=2$, without a loss, we may assume that $\alpha'=(1,3,4)$. Then $\beta'=(5,6,6')$, and further $ {\bf bm}_{\alpha'}{\bf bm}_\alpha L$ must contain the following three $4$-simplices
$(1,3, 5,6,6')$, $(1,4,5,6,6')$ and $(3,4,5,6,6')$.
Next we perform the following bistellar moves on $L$:
\begin{enumerate}
\item
Firstly we perform a bistellar $1$-move at $(1,3,4,5)$ on $L$ since  $\Link_L(1,3,4,5)= \partial (2,6')$.
 \item
 Secondly   we  perform a bistellar 2-move at $(1,3,4)$ on
 ${\bf bm}_{(1,3,4,5)} L$ since  $(2,6,6')\not\in {\bf bm}_{(1,3,4,5)} L$ and $\Link_{{\bf bm}_{(1, 3,4,5)}L}(1,3,4)=\partial (2,6,6')$.
 \item
  Thirdly we see that $(5,6,6')\not\in {\bf bm}_{(1,3,4)}{\bf bm}_{(1,3,4,5)} L$ and
 \[
 \Link_{{\bf bm}_{(1,3,4)}{\bf bm}_{(1,3,4,5)} L} (1,2,3)\,=\,\partial (5,6,6').
 \]
  Thus we can perform a bistellar 2-move at $(1, 2,3)$ on ${\bf bm}_{(1,3,4)}{\bf bm}_{(1,3,4,5)} L$, so that
  $(1,3,5,6,6')\in {\bf bm}_{(1,2,3)}{\bf bm}_{(1,3,4)}{\bf bm}_{(1,3,4,5)} L$.
 \item
  Next we can check that $(4,5,6,6')\not\in {\bf bm}_{(1,2,3)}{\bf bm}_{(1,3,4)}{\bf bm}_{(1,3,4,5)} L$ and
 \[
 \Link_{{\bf bm}_{(1,2,3)}{\bf bm}_{(1,3,4)}{\bf bm}_{(1,3,4,5)} L}(1,2)\,=\,\partial (4,5,6,6'),
 \]
  we can perform a bistellar 3-move at $(1,2)$ on ${\bf bm}_{(1,2,3)}{\bf bm}_{(1,3,4)}{\bf bm}_{(1,3,4,5)} L$, and then
 $$(1,4,5,6,6')\in {\bf bm}_{(1,2)} {\bf bm}_{(1,2,3)}{\bf bm}_{(1,3,4)}{\bf bm}_{(1,3,4,5)} L.$$
 \item
 Finally we can see that $\Link_{{\bf bm}_{(1,2)}{\bf bm}_{(1,2,3)}{\bf bm}_{(1,3,4)}{\bf bm}_{(1,3,4,5)} L}(2,6')=\partial (3,4,5,6)$
 but $(3,4,5,6)\not\in {\bf bm}_{(1,2)}{\bf bm}_{(1,2,3)}{\bf bm}_{(1,3,4)}{\bf bm}_{(1,3,4,5)} L$. Thus, after performing a bistellar 3-move at $(2,6')$ on ${\bf bm}_{(1,2)}{\bf bm}_{(1,2,3)}{\bf bm}_{(1,3,4)}{\bf bm}_{(1,3,4,5)} L$, we conclude that
 \[
 (3,4,5,6,6')\,\in\, {\bf bm}_{(2,6')}{\bf bm}_{(1,2)}{\bf bm}_{(1,2,3)}{\bf bm}_{(1,3,4)}{\bf bm}_{(1,3,4,5)} L.
 \]
 \end{enumerate}
  The above arguments give the following equality
  \[
   {\bf bm}_{\alpha'}{\bf bm}_\alpha L\,=\,{\bf bm}_{(2,6')}{\bf bm}_{(1,2)}{\bf bm}_{(1,2,3)}{\bf bm}_{(1,3,4)}{\bf bm}_{(1,3,4,5)} L.
   \]

When $i=3$, without loss of generality, we assume that $\alpha'=(1,3,4,5)$. Then $\beta'=(6,6')$ and ${\bf bm}_{\alpha'}{\bf bm}_\alpha L$ must contain the following three $4$-simplices
$(2,3,4,5,6)$, $(1,3,4,6,6')$, $(1,3, 5,6,6')$, $(1,4,5,6,6')$ and $(3,4,5,6,6')$.
Now we perform the following bistellar moves on $L$:
\begin{enumerate}
\item
Firstly we perform a bistellar $1$-move at $(1,3,4,5)$ on $L$ since  $\Link_L(1,3,4,5)= \partial (2,6')$.
 \item
 Secondly   we  perform a bistellar 1-move at $(1,2,3,5)$ on
 ${\bf bm}_{(1,3,4,5)} L$ since  $(6,6')\not\in {\bf bm}_{(1,3,4,5)} L$ and $\Link_{{\bf bm}_{(1, 3,4,5)}L}(1,2,3,5)=\partial (6,6')$, so that we can obtain that
 $$(1,3,5,6,6')\in {\bf bm}_{(1,2,3,5)}{\bf bm}_{(1,3,4,5)} L.$$
 \item
  Thirdly we see that $(4,6,6')\not\in {\bf bm}_{(1,2,3,5)}{\bf bm}_{(1,3,4,5)} L$ and
 \[
 \Link_{{\bf bm}_{(1,2,3,5)}{\bf bm}_{(1,3,4,5)} L} (1,2,3)\,=\,\partial (4,6,6').
 \]
  Thus we can perform a bistellar 2-move at $(1, 2,3)$ on ${\bf bm}_{(1,2,3,5)}{\bf bm}_{(1,3,4,5)} L$, so that
  $(1,3,4,6,6')\in {\bf bm}_{(1,2,3)}{\bf bm}_{(1,2,3,5)}{\bf bm}_{(1,3,4,5)} L$.
 \item
  Next we can check that $(4,5,6,6')\not\in {\bf bm}_{(1,2,3)}{\bf bm}_{(1,2,3,5)}{\bf bm}_{(1,3,4,5)} L$ and
 \[
 \Link_{{\bf bm}_{(1,2,3)}{\bf bm}_{(1,2,3,5)}{\bf bm}_{(1,3,4,5)} L}(1,2)\,=\,\partial (4,5,6,6'),
 \]
  we can perform a bistellar 3-move at $(1,2)$ on ${\bf bm}_{(1,2,3)}{\bf bm}_{(1,2,3,5)}{\bf bm}_{(1,3,4,5)} L$, and then
 $$(1,4,5,6,6')\in {\bf bm}_{(1,2)} {\bf bm}_{(1,2,3)}{\bf bm}_{(1,2,3,5)}{\bf bm}_{(1,3,4,5)} L.$$
 \item
 Finally we see  that $\Link_{{\bf bm}_{(1,2)}{\bf bm}_{(1,2,3)}{\bf bm}_{(1,2,3,5)}{\bf bm}_{(1,3,4,5)} L}(2,6')=\partial (3,4,5,6)$
 but $(3,4,5,6)\not\in {\bf bm}_{(1,2)}{\bf bm}_{(1,2,3)}{\bf bm}_{(1,2,3,5)}{\bf bm}_{(1,3,4,5)} L$. Thus, after performing a bistellar 3-move at $(2,6')$ on ${\bf bm}_{(1,2)}{\bf bm}_{(1,2,3)}{\bf bm}_{(1,2,3,5)}{\bf bm}_{(1,3,4,5)} L$, we conclude that
 \[
 (2,3,4,5,6), (3,4,5,6,6')\,\in\, {\bf bm}_{(2,6')}{\bf bm}_{(1,2)}{\bf bm}_{(1,2,3)}{\bf bm}_{(1,2,3,5)}{\bf bm}_{(1,3,4,5)} L.
 \]
 \end{enumerate}
 Moreover,  the above arguments give the following equality
  \[
   {\bf bm}_{\alpha'}{\bf bm}_\alpha L\,=\,{\bf bm}_{(2,6')}{\bf bm}_{(1,2)}{\bf bm}_{(1,2,3)}{\bf bm}_{(1,2,3,5)}{\bf bm}_{(1,3,4,5)} L.
   \]
\end{enumerate}
\end{proof}

\begin{proposition}
For $h=1, 2$, $\{\mathcal{A}_{[L]}\}_{[L]\in \widetilde{\mathfrak{S}}_K}$ forms a direct system.
\end{proposition}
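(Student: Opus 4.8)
The plan is to realize $\{\mathcal{A}_{[L]}\}_{[L]\in\widetilde{\mathfrak{S}}_K}$ as a functor on the directed set $(\widetilde{\mathfrak{S}}_K,\preceq)$, whose directedness for $h=1,2$ is exactly Lemma~\ref{dir-set}. So for each relation $[L_1]\preceq[L_2]$ I must produce a $\mathbb{ZP}$-algebra homomorphism $\theta_{[L_1],[L_2]}\colon\mathcal{A}_{[L_1]}\to\mathcal{A}_{[L_2]}$, and then check the functoriality axioms $\theta_{[L],[L]}=\mathrm{id}$ and $\theta_{[L_2],[L_3]}\circ\theta_{[L_1],[L_2]}=\theta_{[L_1],[L_3]}$ for $[L_1]\preceq[L_2]\preceq[L_3]$. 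Since the field isomorphisms $\mathcal{L}_{(L,L')}$ of~\eqref{field-iso} together with Theorem~\ref{main1} canonically identify the algebras attached to any two representatives of one class, I may fix representatives and transport the data along these isomorphisms whenever convenient.

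First I would attach an elementary homomorphism to a single bistellar move ${\bm}_\alpha$ with $\dim\alpha\geq h$. If $\dim\alpha=h$ the move stays inside the class, so $[L_1]=[L_2]$ and the elementary map is the identity of $\mathcal{A}_{[L_1]}$. If $\dim\alpha=k>h$, then ${\bm}_\alpha$ carries a representative $L_1$ to $L_2={\bm}_\alpha L_1$ in a genuinely different class, and by Corollary~\ref{f-vector1} the number $f_{n-1}$ of cluster variables strictly drops; the codimension-one faces outside the support $\Lambda_\alpha$ are common to $L_1$ and $L_2$, while only finitely many cluster variables supported on $\Lambda_\alpha$ are affected. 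Modelling the construction on Definition~\ref{seed mutation} and~\eqref{field-iso}, I would send each unaffected cluster variable to itself and express each disappearing one as an element of $\mathbb{F}\mathcal{X}(L_2)$ dictated by the local combinatorics of the replacement $\Lambda_\alpha\to\Lambda_\beta$, in the spirit of Definition~\ref{seed mutation}; because the two ranks now differ this yields a genuine (in general non-invertible) $\mathbb{ZP}$-algebra homomorphism rather than the field isomorphism obtained for $h$-moves. Verifying that this assignment respects the defining ideal of $\mathcal{A}_{[L_1]}$ (Definition~\ref{alg1}), so that it descends to $\mathcal{A}_{[L_1]}\to\mathcal{A}_{[L_2]}$, is the first technical point. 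For a general witness of $[L_1]\preceq[L_2]$, namely a sequence ${\bm}_{\alpha_1},\dots,{\bm}_{\alpha_l}$ with all $\dim\alpha_i\geq h$, I define $\theta_{[L_1],[L_2]}$ as the composite of the corresponding elementary maps.

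The step I expect to be the main obstacle is well-definedness: $\theta_{[L_1],[L_2]}$ must depend only on the pair $([L_1],[L_2])$ and not on the connecting sequence or the chosen representatives. Here the proof of Lemma~\ref{dir-set} is the key tool, since it already isolates the elementary identities relating two move-sequences with equal endpoints: moves with small-overlap supports commute, ${\bm}_{\alpha'}{\bm}_\alpha={\bm}_\alpha{\bm}_{\alpha'}$; a product with large overlap is rewritten as the explicit longer product exhibited there; and moves of dimension exactly $h$ only permute within a class. I would show that each topological identity is mirrored by the induced homomorphisms: commuting moves induce commuting maps because, by the disjointness of the relevant sets $\mathcal{D}_\alpha$ (Lemma~\ref{bpset}(5) and its evident higher-dimensional analogue), their affected cluster variables form disjoint families; the rewriting identities induce equal composites because both sides are governed by the same local replacement of $\Lambda_\alpha$ by $\Lambda_\beta$; and compatibility with the dimension-$h$ moves is precisely the content of the isomorphisms $\mathcal{L}_{(L,L')}$.

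Granting well-definedness, functoriality is then immediate: the empty sequence gives $\theta_{[L],[L]}=\mathrm{id}$, and concatenating a witness of $[L_1]\preceq[L_2]$ with one of $[L_2]\preceq[L_3]$ is a witness of $[L_1]\preceq[L_3]$ whose associated composite is, by construction, $\theta_{[L_2],[L_3]}\circ\theta_{[L_1],[L_2]}$. The degenerate case $|[K]|=1$ is trivial, $\mathcal{A}_{[K]}$ being a polynomial ring with only the relation $[K]\preceq[K]$. This would complete the verification that $\{\mathcal{A}_{[L]}\}_{[L]\in\widetilde{\mathfrak{S}}_K}$ is a direct system over $(\widetilde{\mathfrak{S}}_K,\preceq)$.
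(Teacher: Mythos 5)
Your overall skeleton (elementary homomorphisms attached to single moves with $\dim\alpha\geq h$, composed along a witness sequence, then functoriality) matches the paper's, but there is a concrete error at the heart of your description of the elementary map. For a move ${\bm}_\alpha$ with $\dim\alpha=k>h$ (a bistellar $j$-move with $j=n-k<h$), the number of codimension-one faces \emph{increases}, it does not drop: in the paper's worked case $n=4$, $\dim\alpha=3$, $\alpha=(1,2,3,4)$, $\beta=(5,6)$, the single face $(1,2,3,4)$ disappears and six faces $(i,j,5,6)$ appear, so $f_{3}$ grows by $5$. (Corollary~\ref{f-vector1} only says $f_{n-1}$ is preserved iff $n=2h$; the direction of the change for $j<h$ follows from Theorem~\ref{pachner1} or from counting the top simplices $F_i$ versus $H_j$.) Consequently the paper's elementary map is an \emph{embedding} $\varphi_\alpha$ fixing all unaffected variables and sending the one vanishing variable $x_{(1,2,3,4)}$ to one of the new variables $x_{(1,2,5,6)}$; it is not a map that must ``express each disappearing one'' in terms of a smaller remaining cluster. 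Your picture of a rank-decreasing, quotient-like map would not produce the injective homomorphisms $e_{[L][L']}\colon\mathcal{A}_{[L]}\hookrightarrow\mathcal{A}_{[L']}$ the paper uses.

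The second, more substantive gap is that you flag but do not prove the one step that carries the content of the paper's argument: that $\varphi_\alpha$ respects the defining relations of $\mathcal{A}_{[L]}$ from Definition~\ref{alg1}. The paper establishes this via Lemma~\ref{L:22}: every bistellar pair $(\alpha',\beta')$ of type $h$ in $L$ is still a bistellar pair of ${\bm}_\alpha L$ (the proof rules out $\alpha'\subset\alpha\cup\beta$ and $\beta'\subset\alpha\cup\beta$ using that $L$ is a closed PL manifold), so every exchange relation~\eqref{e-relation} of $\mathcal{A}_{[L]}$ reappears verbatim among those of $\mathcal{A}_{[{\bm}_\alpha L]}$ and the field embedding $\widetilde{\varphi}_\alpha$ descends. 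Without an argument of this type your ``first technical point'' remains open. On the other hand, the well-definedness issue you identify --- independence of $\theta_{[L_1],[L_2]}$ from the chosen move sequence, needed for the compatibility $e_{[L_1][L_3]}=e_{[L_2][L_3]}\circ e_{[L_1][L_2]}$ --- is a genuine concern that the paper itself passes over by calling the embedding ``canonical''; your proposed reduction to the commutation and rewriting identities in the proof of Lemma~\ref{dir-set} is a reasonable strategy, but as written it is only a sketch, not a proof.
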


\begin{proof}
When $h=1$, the proof is not difficult. We would like to leave it as an exercise to the reader.

When $h=2$, by Lemma~\ref{dir-set} it suffices to show that for $[L]\preceq [L']$ in $\widetilde{\mathfrak{S}}_K$, there is a canonical embedding homomorphism $e_{[L][L']}\colon \mathcal{A}_{[L]}\hookrightarrow\mathcal{A}_{[L']}$. This is equivalent to showing that when $L'={\bm}_\alpha L$ with $\dim\alpha =3,4$, such a homomorphism $e_{[L][L']}$ exists. We will only prove the case of $\dim \alpha=3$ since the proof of the case $\dim\alpha=4$ is similar.

Let $\dim\alpha=3$. Without loss of generality, assume that $\alpha=(1,2,3,4)$ and $\beta=(5,6)$. Then
\[
\mathcal{X}(L')\,=\,\left(\mathcal{X}(L\right)\setminus\{x_{(1,2,3,4)}\})\cup \{x_{(1,2,5,6)}, x_{(1,3,5,6)}, x_{(1,4,5,6)}, x_{(2,3,5,6)}, x_{(2,4,5,6)}, x_{(3,4,5,6)}\}.
\]
Define $\varphi_\alpha\colon \mathcal{X}(L)\to\mathcal{X}(L')$ by
\[
\varphi_\alpha(x)\,=\,
\begin{cases}
x, & \text{if $x\in \mathcal{X}(L)\setminus\{x_{(1,2,3,4)}\}$;}\\
x_{(1,2,5,6)}, & \text{if $x=x_{(1,2,3,4)}$.}
\end{cases}
\]
Then it is easy to see that $\varphi_\alpha$ induces an embedding homomorphism  $\widetilde{\varphi}_\alpha\colon \mathbb{F}\mathcal{X}(L)\to\mathbb{F}\mathcal{X}(L')$.

\begin{lemma}\label{L:22}
Let $(\alpha', \beta')$ be a bistellar pair of type $2$ in $L$. Then $(\alpha', \beta')$ must also be a bistellar pair of ${\bm}_\alpha L$.
\end{lemma}

\begin{proof}
We note that $\Link_L\alpha'=\partial\beta'$. First we show that $\alpha'\subset \alpha\cup\beta$ is impossible. In fact, there are two possibilities: either $\alpha'\subset \alpha$, or $\alpha'\nsubseteq \alpha$. If $\alpha'\subset \alpha$, assume that $\alpha'=(1,2,3)$, then $\Link_L\alpha'$ will not be the boundary of a missing face -- so this case is impossible. If $\alpha\nsubseteq \alpha$, assume that $\alpha'=(1,2,5)$, then $\beta'=(3,4,7)$.
Since what  we have satisfies that $\Link_L\alpha'=\partial\beta'$, this induces that $(1,2,3,5,7), (1,2,4,5,7)\in L$. However,
$(1,2,3,5,7)\cap (1,2,4,5,7)=(1,2,5,7)$, which means that $(3,4)\not \in L$ since $L$ is a closed PL manifold. This is a contradiction. Therefore, $\alpha'\subset \alpha\cup\beta$ is impossible. Next, it is easy to see that $\beta'\subset \alpha\cup
\beta$ is impossible.

Now if $|\beta'\cap (\alpha\cup\beta)|\leq 2$, then clearly, whichever $|\alpha'\cap (\alpha\cup\beta)|=0$ or 1 or 2, we have that
$(\alpha', \beta')$ is a bistellar pair of ${\bf bm}_\alpha L$.
\end{proof}

By the exchange relations~(\ref{e-relation}) in Definition~\ref{seed mutation} and Lemma~\ref{L:22} we see  that all possible exchange relations in
$\mathcal{A}_{[L]}$ are also the exchange relations in $\mathcal{A}_{[{\bf bm}_\alpha L]}$. Furthermore, by Definition~\ref{alg1}, the embedding homomorphism  $\widetilde{\varphi}_\alpha\colon \mathbb{F}\mathcal{X}(L)\to\mathbb{F}\mathcal{X}({\bf bm}_\alpha L)$ induces a canonical embedding homomorphism
$e_{[L][{\bf bm}_\alpha L]}\colon \mathcal{A}_{[L]}\hookrightarrow\mathcal{A}_{[{\bf bm}_\alpha L]}$.
\end{proof}

\begin{definition}
Define
\[
\mathcal{A}_K^{PL}\,\letbe\, \lim_{[L]\in \widetilde{\mathfrak{S}}_K}\mathcal{A}_{[L]}.
\]
\end{definition}

\begin{theorem} \label{main2}
For $n=2, 4$,
assume that $K$ and $K'$ in $\mathbb{PL}_n$ are PL homeomorphic. Then $\mathcal{A}_K^{PL}\cong \mathcal{A}_{K'}^{PL}$.
\end{theorem}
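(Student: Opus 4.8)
The plan is to exploit the fact that the construction $\mathcal{A}_K^{PL}$ depends on $K$ only through its PL homeomorphism class, so that two PL homeomorphic manifolds produce \emph{identical} direct systems, whence the limits trivially coincide.

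First I would observe that PL homeomorphism is an equivalence relation on $\mathbb{PL}_n$ (it is reflexive, symmetric and transitive). Consequently, if $K$ and $K'$ are PL homeomorphic, then a PL manifold $L$ is PL homeomorphic to $K$ if and only if it is PL homeomorphic to $K'$. Hence the two index sets agree on the nose:
\[
\mathfrak{S}_K\,=\,\mathfrak{S}_{K'}.
\]
Next, since the grading by vertex number, the equivalence relation $\sim$ generated by middle-dimensional bistellar $h$-moves, and the binary relation $\preceq$ are all defined purely in terms of the manifolds lying in $\mathfrak{S}_K$ and the bistellar moves between them—making no reference to the chosen basepoint $K$—I would conclude that
\[
(\widetilde{\mathfrak{S}}_K,\preceq)\,=\,(\widetilde{\mathfrak{S}}_{K'},\preceq)
\]
as partially ordered sets, and these are directed for $h=1,2$ by Lemma~\ref{dir-set}. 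Moreover, for each class $[L]$ the algebra $\mathcal{A}_{[L]}$ of Definition~\ref{alg1} and each transition homomorphism $e_{[L][L']}$ supplied by the preceding Proposition are attached intrinsically to $[L]$ and to the move $L'=\bm_\alpha L$, again with no dependence on $K$. Therefore the two direct systems $\{\mathcal{A}_{[L]},\,e_{[L][L']}\}_{[L]\in\widetilde{\mathfrak{S}}_K}$ and $\{\mathcal{A}_{[L]},\,e_{[L][L']}\}_{[L]\in\widetilde{\mathfrak{S}}_{K'}}$ are one and the same diagram.

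Finally, because a direct limit is determined by its underlying diagram, I would conclude
\[
\mathcal{A}_K^{PL}\,=\,\lim_{[L]\in\widetilde{\mathfrak{S}}_K}\mathcal{A}_{[L]}\,=\,\lim_{[L]\in\widetilde{\mathfrak{S}}_{K'}}\mathcal{A}_{[L]}\,=\,\mathcal{A}_{K'}^{PL},
\]
which in particular yields the asserted isomorphism $\mathcal{A}_K^{PL}\cong\mathcal{A}_{K'}^{PL}$.

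The point I expect to require the most care is not in this theorem at all but upstream: the dimensional restriction $n=2,4$ enters only to guarantee, via Lemma~\ref{dir-set} and the preceding Proposition, that $(\widetilde{\mathfrak{S}}_K,\preceq)$ genuinely is a directed set and that the transition maps assemble into a bona fide direct system (the nontrivial move-reordering arguments of cases~(I) and~(II) do exactly this). Once those facts are established the invariance statement is formal, so the only substantive verification remaining is to confirm explicitly that none of the data—$\mathfrak{S}_K$, the relations $\sim$ and $\preceq$, the algebras $\mathcal{A}_{[L]}$, and the embeddings $e_{[L][L']}$—secretly depends on the representative $K$ rather than on its PL type; this is precisely the content of the equalities displayed above.
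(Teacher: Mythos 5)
Your proposal is correct and is exactly the paper's argument: the paper's entire proof is the one-line observation that $K$ and $K'$ determine the same directed set, which you have simply spelled out in full (identical index set $\mathfrak{S}_K=\mathfrak{S}_{K'}$, identical poset, identical diagram, hence identical limit). No further commentary is needed.
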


\begin{proof}
This is because $K$ and $K'$ determine the same directed set.
\end{proof}

\bibliography{Final}
\bibliographystyle{plain}

\end{document}